\documentclass[11pt,letterpaper]{amsart}

\usepackage{pictexwd,dcpic}
\usepackage{latexsym}
\usepackage[all,cmtip]{xy}
\usepackage{times}
\usepackage{braket}
\usepackage[english]{babel}
\usepackage[utf8]{inputenc}
\usepackage{faktor}
\usepackage{paralist,url,verbatim}
\usepackage{amscd}
\usepackage[T1]{fontenc}
\usepackage{xspace}
\usepackage{comment}
\usepackage{amsmath,amsthm,amssymb,amsfonts}
\usepackage{mathtools}
\usepackage{enumitem}
\usepackage{wasysym}

\usepackage[dvipsnames,svgnames,x11names]{xcolor}
\definecolor{darkblue}{rgb}{0.0, 0.0, 0.55}

\usepackage[colorlinks,linkcolor=darkblue,citecolor=darkblue,urlcolor=OliveGreen,hypertexnames=darkblue,pagebackref=false]{hyperref}

\definecolor{darkgreen}{RGB}{0,80,0}


\newcommand{\cT}{\mathcal{T}}

\newcommand{\ktrash}[1]{}

\newcommand{\change}[1]{}
\definecolor{cKlaus}{rgb}{0.15,0.40,0.03}
\definecolor{cKLinkGBU}{rgb}{1,0,0}  
\definecolor{cKLink}{rgb}{0.6,0.2,0.3}
\definecolor{cALink}{rgb}{0,0.3,0}
\usepackage[textsize=tiny]{todonotes}



\usepackage{tikz}
\usetikzlibrary{decorations.markings,intersections,positioning,calc}

  \tikzset{mylabel/.style  args={at #1 #2  with #3}{
    postaction={decorate,
    decoration={
      markings,
      mark= at position #1
      with  \node [#2] {#3};
 } } } }

\theoremstyle{plain}
\newtheorem{theorem}{Theorem}[section]
\newtheorem{conjecture}[theorem]{Conjecture}

\newtheorem{corollary}[theorem]{Corollary}
\newtheorem{construction}[theorem]{Construction}
\newtheorem{lemma}[theorem]{Lemma}
\newtheorem{proposition}[theorem]{Proposition}

\theoremstyle{definition}

\newtheorem{remark}[theorem]{Remark}
\newtheorem{definition}[theorem]{Definition}

\newtheorem{example}[theorem]{Example}


\newcommand{\0}{$0$-mutable}

\newcommand{\Hom}{\operatorname{Hom}}

\newcommand{\NN}{\ensuremath{\mathbb{N}}}
\newcommand{\PP}{\ensuremath{\mathbb{P}}}
\newcommand{\QQ}{\ensuremath{\mathbb{Q}}}

\newcommand{\RR}{\ensuremath{\mathbb{R}}}
\newcommand{\CC}{\ensuremath{\mathbb{C}}}

\newcommand{\ZZ}{\ensuremath{\mathbb{Z}}}

\newcommand{\GL}{\operatorname{GL}}

\newcommand{\paar}{(X,\partial X)}
\newcommand{\bfa}{\ensuremath{\mathbf{a}}}
\newcommand{\bfb}{\ensuremath{\mathbf{b}}}
\newcommand{\bfj}{\ensuremath{\mathbf{j}}}

\newcommand{\bfk}{\ensuremath{\mathbf{k}}}
\newcommand{\bfn}{\ensuremath{\mathbf{n}}}

\newcommand{\bft}{\ensuremath{\mathbf{t}}}
\newcommand{\bfx}{\ensuremath{\mathbf{x}}}

\newcommand{\bfz}{\ensuremath{\mathbf{z}}}
\newcommand{\bfZ}{\ensuremath{\mathbf{Z}}}

\newcommand{\conv}{\operatorname{conv}}

\newcommand{\n}{n}

\newcommand{\kenditem}{\vspace{-1ex}\end{itemize}}

\newcommand{\kitem}{\begin{itemize}\vspace{-2ex}}

\newcommand{\tM}{\widetilde{M}}
\newcommand{\tN}{\widetilde{N}}

\newcommand{\Der}{\operatorname{Der}}


 %
\newcommand{\bound}{{\partial}} 
 %
 %







\renewcommand{\0}{$0$-mutable}
\newcommand{\spec}{\operatorname{Spec}}

\newcommand{\lan}{\langle}
\newcommand{\ran}{\rangle}

\newcommand{\newt}{\Delta}
\newcommand{\mut}{\operatorname{mut}}

\newcommand{\tr}{\tilde{r}}

\usepackage{tikz-cd}

\newcommand{\cE}{\mathcal{E}}

\newcommand{\cI}{\mathcal{I}}

\newcommand{\cM}{\mathcal{M}}

\newcommand{\cS}{\mathcal{S}}

\newcommand{\cX}{\mathcal{X}}
\newcommand{\cY}{\mathcal{Y}}

\newcommand{\bfT}{\ensuremath{\mathbf{t}}}
\newcommand{\bfTT}{\ensuremath{\mathbf{T}}}
\newcommand{\bfi}{\mathbf{i}}

\newcommand{\bfe}{\ensuremath{\mathbf{e}}}

\newcommand{\bfy}{\ensuremath{\mathbf{y}}}

\newcommand{\bo}{\partial}

        \renewcommand{\bo}{\bound}

      \newcommand{\s}{s}

\newcommand{\tw}{\tilde{w}}

\begin{document}

\title[Laurent polynomials and deformations of toric singularities]{Laurent polynomials and deformations of non-isolated Gorenstein toric singularities}
\author{Matej Filip }
\address{University of Ljubljana, Institute of Mathematics, Physics and Mechanics, Trzaska cesta 25, Ljubljana, Slovenia}
\email{matej.filip@fe.uni-lj.si}
\thanks{Supported by Slovenian Research Agency program P1-0222 and grant J1-60011}

\begin{abstract}
We establish a correspondence between one-parameter deformations of an affine Gorenstein toric pair $(X_P, \partial X_P)$, defined by a polytope $P$, and mutations of a Laurent polynomial $f$ with Newton polytope $\newt(f) = P$. For a Laurent polynomial $f$ in two variables, we construct a formal deformation of the three-dimensional Gorenstein toric pair $(X_{\newt(f)}, \partial X_{\newt(f)})$ over $\CC[[\bfTT_f]]$, where $\bfTT_f$ is the set of deformation parameters arising from mutations. The general fibre of this deformation is smooth if and only if $f$ is $0$-mutable. The Kodaira--Spencer map of the constructed deformation is injective, and if $f$ is maximally mutable, then the deformation cannot be nontrivially extended to a larger smooth base space. 
\end{abstract}

\keywords{Deformation theory; Toric singularities; Laurent polynomials; Mirror symmetry; Fano manifolds}

\subjclass[2020]{13D10, 14B05, 14B07, 14J33, 14M25}

\maketitle

\section{Introduction}

The classification of Calabi--Yau and Fano manifolds is one of the most fundamental and extensively studied problems in geometry and theoretical physics, especially following the discovery of mirror symmetry in the late 1980s. In dimension two, the ten smooth Fano surfaces were classified by del Pezzo in the 1880s \cite{Pez87}. In dimension three, 105 types of smooth Fano threefolds were classified through the work of Fano in the 1930s and 1940s, Iskovskikh in the 1970s, and Mori–Mukai in the 1980s (see \cite{Fan47}, \cite{Isk77}, \cite{Isk78}, \cite{Isk79}, \cite{MM81} and \cite{MM03}). In higher dimensions, their classification remains an open problem (see \cite{KMM92}, \cite{BCHM10}, \cite{Bir19}, \cite{CCGGK13}, \cite{CCGK16}, \cite{KP22}, and \cite{CKPT21} for developments in higher dimensions).

Mirror symmetry originally describes the connection between two geometric objects called Calabi--Yau manifolds. If two Calabi--Yau manifolds are mirror symmetric, they are geometrically distinct yet equivalent when viewed from the physical side of string theory. Mirror symmetry has many mathematical formulations and generalisations that go beyond the Calabi--Yau manifolds. 
In addition to mirror-symmetric Calabi--Yau manifolds, the most notable conjecture involves the mirror relationship between Fano manifolds and Laurent polynomials (see \cite{CCGGK13}). 

Mirror symmetry provides new insight into classification, since it suggests that Fano manifolds are in  correspondence with certain Laurent polynomials. More precisely, if a Laurent polynomial $f$ is mirror to a Fano manifold $Y$, it is expected that a Fano manifold $Y$ admits a $\QQ$-Gorenstein degeneration to a singular toric variety, whose fan is the spanning fan of the Newton polytope $\newt(f)$.

There has been progress in constructing Fano manifolds using logarithmic geometry (see, e.g., \cite{FFR21}) and based on the above correspondence, the recent works \cite{CGR25}, \cite{CR24}, and \cite{Gr25} suggest that log structures arising from special Laurent polynomials are expected to produce new smooth Fano manifolds. There are also approaches that do not rely on logarithmic geometry (see, e.g., \cite{CKP19}) and in the recent work \cite{CHP24}, the smoothing of Gorenstein toric Fano 3-folds is constructed using admissible Minkowski decomposition data for a 3-dimensional reflexive polytopes.

Deformation theory plays a fundamental role in the study of moduli spaces, families of varieties, and the local behavior of algebraic and geometric structures (see \cite{Ste03}, \cite{Ser06} and \cite{Har10}). In particular, the deformation theory of toric varieties has been extensively studied due to its connections with combinatorics and mirror symmetry.
 For affine toric surfaces, which are cyclic quotient singularities, Koll\'ar and Shepherd-Barron \cite{KS88} established a correspondence between certain partial resolutions (P-resolutions) and reduced miniversal base components. Additionally, Arndt \cite{Arn02} provided explicit equations for the miniversal base space. Furthermore, in \cite{Chr91} and \cite{Ste91}, Christophersen and Stevens produced a simpler set of equations for each reduced component of the miniversal base space. 

In higher dimensions, Altmann \cite{Alt97} constructed a miniversal deformation for affine Gorenstein toric varieties with isolated singularities. He further demonstrated that the reduced irreducible components can be explicitly described: they are in one-to-one correspondence with maximal decompositions of the defining polytope into Minkowski sums.

We aim to extend these results to non-isolated Gorenstein toric singularities. We will work with deformations of a pair $\paar$ instead of only $X$, since, in fact, we will see that it is more natural to work with the deformations of a pair, as we already noticed in \cite{CFP22} and \cite{Fil23}. 

In \cite{CFP22}, we formulated, together with Corti and Petracci, the following conjecture: there exists a canonical bijective correspondence  
$
\kappa\colon\mathfrak{B}\to \mathfrak{A},
$
where $\mathfrak{A}$ is the set of smoothing components of the three-dimensional affine toric Gorenstein pair $\paar$, and $\mathfrak{B}$ is the set of \0 Laurent polynomials with Newton polygon $P$, which defines $X$.
A Laurent polynomial is \0 if it can be mutated to a point (see Definition \ref{def 0mut} for a precise definition).

To state the main results of this paper precisely and clarify the techniques used in the proofs, we first introduce some definitions.
Consider a rank-$n$ lattice $\tN\cong \ZZ^n$ and its dual lattice $\tM=\mathrm{Hom}_{\ZZ} (\tN,\ZZ)$. A strictly convex
full-dimensional rational polyhedral cone $\sigma \subset \tN_\RR$ defines an affine toric variety
$X := \spec(A)$, where $A=\CC[\sigma^\vee \cap \tM]$. 
Note that $X$ is Gorenstein if and only if the primitive generators of the rays of $\sigma$ all lie on an affine hyperplane $(R^*=1)$ for some $R^*\in \tM$. This element $R^*$ is called the \emph{Gorenstein degree}. 
The polytope $P$ is defined as the convex hull of the primitive generators of the rays of $\sigma$, i.e.,  
$P:=\sigma \cap (R^* = 1).$
The isomorphism class of the toric variety $X$ depends only on the affine equivalence class of $P$. We denote by $X_P:=\spec \CC[S_P]:=\spec \CC[\sigma^\vee \cap \tM]$ the Gorenstein toric variety associated with $P$.

If $X_P$ has an isolated singularity, then the entire tangent space $T^1_{X_P}$ of the deformation functor of $X_P$ is concentrated in degree $-R^*$, i.e., $T^1_X(-R^*)=T^1_X$. 
This observation was crucial in Altmann's construction in \cite{Alt97}. Together with Altmann and Constantinescu, we provide partial results concerning the deformations of toric varieties at special lattice degrees in \cite{ACF22a} and \cite{ACF22b}, generalizing Altmann's results in \cite{Alt97}. 

A fundamental challenge in toric deformation theory is how to systematically combine deformations arising from different lattice degrees. We need to tackle this problem in order to understand deformations of the affine Gorenstein toric variety $X=X_P$ with non-isolated singularities. In this case, the polytope $P$ is arbitrary, and the tangent space $T^1_{(X,\partial X)}$ of the deformation functor of $\paar$ is spread over infinitely many lattice degrees in $\tM$:
$$
T^1_{\paar} = \bigoplus_{m \in \tM} T^1_{\paar}(-m).
$$
The same holds for $T^1_X$, which is naturally embedded in $T^1_{\paar}$.

We propose that \emph{Laurent polynomials} serve as a natural tool to connect deformations from different lattice degrees. Laurent polynomials and their mutations have so far been used for constructing certain one-parameter deformations (see \cite{Ilt12}).  
In this paper, we show how they can be used to construct multi-parameter deformations from different lattice degrees. More precisely, from Ilten’s construction we get a flat family over $\PP^1$ whose fibre over $0$ is $X_{\newt(f)}$ and whose fibre over $\infty$ is $X_{\newt(g)}$, where the Laurent polynomials $f$ and $g$ are connected by a mutation. The main geometric idea of this paper is to show that not only the toric varieties $X_{\newt(f)}$ and $X_{\newt(g)}$ are related by such a family over $\PP^1$, but in fact their entire deformation families are connected, with tangent spaces determined by all possible mutations of $f$ and $g$. In the following, we make this statement more precise, present the main results of the paper, and outline the techniques used in the proofs.

It is known that a \emph{deformation pair} $(m, Q)$ of a polytope $P$, consisting of $m \in \tM$ and a lattice polytope $Q \subset N_\RR$ (see Definition \ref{def def pair} for the precise definition), induces a Minkowski summand $Q$ of the polyhedron $\sigma \cap (m = 1)$. Consequently, by a result of Altmann (see \cite{Alt00}), such a pair gives rise to a one-parameter deformation of $X_P$. We denote the corresponding deformation parameter by $t_{(m,Q)}$. The first key result of this paper is the explicit construction of this one-parameter deformation in terms of deformations of the defining equations: $X_P=\spec \CC[\bfx,u]/(f_{\bfk}(\bfx,u)\mid \bfk\in \NN^r)$ (see \eqref{eq fbfk xu2}), and in Section \ref{sec 3} we construct $F_{\bfk}(\bfx,u,t_{(m,Q)})$ such that $F_{\bfk}(\bfx,u,0)=f_{\bfk}(\bfx,u)$, and such that the linear relations among the $f_\bfk(\bfx,u)$ lift to linear relations among the $F_{\bfk}(\bfx,u,t_{(m,Q)})$. By the equational criteria of flatness, this implies that we have a one-parameter formal deformation of $X_P$ (and also of a pair $(X_P,\partial X_P)$) over $\CC[[t_{(m,Q)}]]$, see Corollary \ref{cor 3.3}.

If $f\in \CC[N]$ is a Laurent polynomial, we define the notion of mutation (see Definition \ref{def mmut} for the definition of $f$ being $(m,g)$-mutable, where $m \in \tM$ and $g$ is a Laurent polynomial) and show that 
if $f$ is $(m,g)$-mutable, then $(m,\newt(g))$ forms a deformation pair of $\newt(f)$ (see Lemma \ref{lem mgex}). 
In the following, we assume that $N\cong \ZZ^2$, so that $f$ is a Laurent polynomial in two variables and $X_{\newt(f)}$ is a three-dimensional affine Gorenstein toric variety.
We say that $f$ is \emph{$m$-mutable} if $m \in \widetilde{M}$ is not constant on $\sigma\cap (R^*=1)$, and if $f$ is $(m, g)$-mutable with $\newt(g) \subset (\pi_M(m)=0)$ being a line segment of lattice length 1. If $f$ is $m$-mutable, then our construction in Section \ref{sec 3} also provides an explicit one-parameter deformation of $\paar$ over $\CC[[t_m]]$, where $X=X_{\newt(f)}$. 

We introduce the following set of deformation parameters:  
$$
\bft_{f} := \{t_m ~|~ m\in \cM(f)\},
\text{ where }\cM(f):=\{m\in \tM\mid \text{$f$ is $m$-mutable}\}.$$ 
We say that a Laurent polynomial $f\in \CC[N]\cong \CC[\ZZ^2]$ is \emph{maximally mutable} if there does not exist Laurent polynomial $g$ such that $\bft_f \subsetneqq \bft_g$ (see Definition \ref{def maxmut}).  
 We say that a polygon $P$ is \emph{$m$-mutable}, for $m\in \tM$, if there exists a Laurent polynomial $f$ with $\newt(f)=P$ such that $f$ is $m$-mutable. We denote 
 $$\cT:=\{m\in \tM\mid \text{$P$ is $m$-mutable, and $\lan v,m \ran>0$ for some $v\in \sigma\cap (R^*=1)$}\},$$
 $$\bfTT_f:=\{t_m\mid \text{$m\in \cT\cap \cM(f)$}\}\subset \bft_f.$$
The following theorem is the main theorem of this paper.

\begin{theorem}\label{thmm}
For any Laurent polynomial $f\in \CC[N]\cong \CC[\ZZ^2]$, there exists a formal deformation of the affine Gorenstein toric pair $\paar$ over $\CC[[\bfTT_f]]$, 
where $X = X_{\newt(f)}$. The general fibre of this deformation is smooth if and only if $f$ is a $0$-mutable Laurent polynomial. Moreover, the associated Kodaira--Spencer map is injective, and if $f$ is maximally mutable, then this deformation cannot be non-trivially extended over $\CC[[\bfTT_f,t_m]]$, for any $m\in \cT\setminus \cM(f)$. 
\end{theorem}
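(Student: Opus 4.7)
The plan is to prove the four assertions of Theorem \ref{thmm} in sequence, building on the one-parameter deformations produced in Section \ref{sec 3}.

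\textbf{Existence of the formal deformation.} For each $m \in \cT \cap \cM(f)$, Section \ref{sec 3} produces a one-parameter formal deformation with equations $F^{(m)}_{\bfk}(\bfx,u,t_m)$ lifting the defining equations $f_{\bfk}(\bfx,u)$ of $X_P$. My plan is to assemble these into a multi-parameter deformation using the equational criterion of flatness: define candidate equations
\[
F_{\bfk}(\bfx,u,\bfTT_f) = f_{\bfk}(\bfx,u) + \sum_{m\in \cT\cap\cM(f)} \bigl(F^{(m)}_{\bfk}(\bfx,u,t_m) - f_{\bfk}(\bfx,u)\bigr) + \text{(higher order)}
\]
and inductively determine the higher-order corrections in $\bfTT_f$ so that the linear syzygies among the $f_{\bfk}$ lift to syzygies among the $F_{\bfk}$. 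The crucial observation driving the inductive step is that the first-order terms $F^{(m)}_{\bfk} - f_{\bfk}$ lie in distinct $\tM$-graded components of the tangent space (namely $T^1_{\paar}(-m)$ for distinct $m$), so that interactions between different parameters $t_m, t_{m'}$ appear only through monomials $t_m^a t_{m'}^b$ with $a,b \geq 1$. I expect the lifting obstructions to be the main technical obstacle: they live in graded pieces of $T^2_{\paar}$, and their vanishing must be verified combinatorially by tracing the mutation structure through the explicit form of each $F^{(m)}_{\bfk}$. The iterated mutations corresponding to cross-terms should assemble into well-defined equations because mutations in distinct directions preserve the underlying toric combinatorics of $P$.

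\textbf{Smoothness of the general fibre iff $f$ is \0.} I would use that each mutation $f \rightsquigarrow g$ corresponds, via Ilten's construction and the family above, to a geometric degeneration that connects $X_{\newt(f)}$ to $X_{\newt(g)}$. If $f$ is \0, one may iterate mutations until the Newton polytope collapses to a point, at which stage the associated ``toric variety'' is a torus and hence smooth; one then verifies that the generic fibre of the multi-parameter family over $\Spec \CC[[\bfTT_f]]$ realises this iterated degeneration simultaneously. Conversely, if the general fibre is smooth, no rigid singular component can survive, which forces the existence of a terminating mutation sequence, i.e., \0-ity of $f$.

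\textbf{Injectivity of Kodaira--Spencer.} This follows formally from the grading: the parameter $t_m$ maps to a class in the graded component $T^1_{\paar}(-m)$, and different $m \in \cT \cap \cM(f)$ give classes in different graded components, so it suffices to check that each individual image is non-zero. But non-triviality of the one-parameter family indexed by $m$ is exactly the content of Corollary \ref{cor 3.3}, so the conclusion follows.

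\textbf{Non-extendability under maximal mutability.} Suppose for contradiction that the deformation extends to $\CC[[\bfTT_f, t_m]]$ for some $m \in \cT \setminus \cM(f)$. The linearisation along $t_m$ would produce a non-trivial class in $T^1_{\paar}(-m)$, and by the explicit correspondence between one-parameter deformations of $\paar$ and mutation data for Laurent polynomials established earlier in the paper, this class would be realised by a Laurent polynomial $g$ with $\newt(g) = P$ that is $m$-mutable. Since $m \notin \cM(f)$, the parameter set $\bft_g$ would strictly contain $\bft_f$, contradicting maximal mutability. The subtlety here lies in ensuring that every first-order extension of the equations must genuinely come from a Laurent polynomial (not merely from an abstract cocycle), and I would address this by tracing through the construction of the $F^{(m)}_{\bfk}$ to show that compatibility with $\bfTT_f$ pins down the extending class as being of Laurent-polynomial type.
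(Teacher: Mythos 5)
Your overall architecture misses the paper's central mechanism, and two of your four steps have genuine gaps. For the existence of the formal deformation over $\CC[[\bfTT_f]]$, you propose to lift the syzygies order by order and to verify "combinatorially" that the obstructions in the graded pieces of $T^2_{\paar}$ vanish. But for an arbitrary $f$ these graded pieces do \emph{not} vanish, and no direct combinatorial verification is available; this is exactly the difficulty the paper is designed to circumvent. The paper's route is a two-step reduction that is absent from your proposal: first, unobstructedness in $\bft_f$ is shown to be a \emph{mutation invariant} --- a deformation of $(X_{\newt(f)},\partial X_{\newt(f)})$ whose equations are $t_m$-mutable transforms, term by term, into a deformation of $(X_{\newt(\mut_m^g f)},\partial X_{\newt(\mut_m^g f)})$ (Theorems \ref{th main 1} and \ref{th main 2}, via the maps $\xi_{(m,Q)}$ and $\psi_{(m,g)}$); second, every $f$ is mutation equivalent to some $g$ admitting a lattice point $v$ with $m(v)\leq 0$ for all $m\in\cM(g)$ (Theorem \ref{th smoothable}, a long combinatorial induction on $n(f)$), and for such $g$ the relevant $T^2$ pieces vanish by \cite[Corollary 5.4]{AS98}. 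Without this transport-of-structure argument your inductive lifting has no reason to close. Your converse direction for smoothness ("no rigid singular component can survive, which forces a terminating mutation sequence") is likewise an assertion rather than an argument; the paper again reduces to the terminal $g$ and shows by an $\tM$-degree count that the variables corresponding to the generators minimizing at $v$ never acquire linear deformation terms, so the general fibre stays singular.

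The non-extendability step is also incorrect as written. For any $m\in\cT$ one has $T^1_{\paar}(-m)\neq 0$ by Proposition \ref{t1 prop}, so a nontrivial first-order extension in the direction $t_m$ \emph{always} exists; you cannot derive a contradiction from the mere existence of a class in $T^1_{\paar}(-m)$ realised by some $m$-mutable Laurent polynomial $g$ with $\newt(g)=P$, because such a $g$ need not satisfy $\cM(g)\supseteq\cM(f)$, so the inference $\bft_g\supsetneqq\bft_f$ does not follow. The obstruction the paper exploits lives at higher order and is detected only after mutating: assuming the extended family exists, one transports it along a mutation sequence (again via Theorem \ref{main th 1 un}) until either the resulting polytope $\tilde P$ fails to be $m$-mutable for some $m$ in the transported parameter set (Lemma \ref{lem 710} then gives the contradiction), or one reaches a configuration where a single Laurent polynomial realising all the transported directions can be built by triangulating $\tilde P$ from a distinguished lattice point --- and \emph{that} contradicts maximal mutability (Theorem \ref{th dis kon}). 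Your Kodaira--Spencer injectivity argument, by contrast, is essentially the paper's and is fine.
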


We now outline the main ideas of the the proof of Theorem \ref{thmm}. 
In Section \ref{sec 4}, we give a connection between mutations of a Laurent polynomial $f$ in two variables and its mutation $\mut_m^gf$. We define the map $\psi_{(m,g)}:\tM\to \tM$ and prove that $f$ is $r$-mutable if and only if $\mut^g_m f$ is $\psi_{(m,g)}(r)$-mutable in Proposition \ref{prop mutability}. Moreover, we have a piecewise linear map $\xi_{(m,g)}: \tM\to \tM$ that bijectively maps $S_{\newt(f)}$ onto $S_{\newt(\mut_m^gf)}$ (see Lemma \ref{st lem}).
The maps $\psi_{(m,g)}$ and $\xi_{(m,g)}$ map $s \in \widetilde{M}$ to $s + z_1 m$ and $s + z_2 m$, respectively, for some $z_1, z_2 \in \mathbb{Z}$.

The first main step to prove Theorem \ref{thmm} is to show that  
$(X_{\newt(f)},\partial X_{\newt(f)})$ is unobstructed in $\bft_f$ if and only if  $(X_{\newt(\mut_m^gf)},\partial X_{\newt(\mut_m^gf)})$ is unobstructed in $\bft_{\mut_m^gf}$ (see Definition \ref{def unobs} for the definition of unobstructedness, which in particular implies the existence of a formal deformation of $(X_{\newt(f)},\partial X_{\newt(f)})$ over $\CC[[\bft_f]]$). This is shown in Theorem \ref{main th 1 un} and the key idea is as follows. Assume there is a formal deformation $$\{F_\bfk(\bfx,u,\bft_f)\mid \bfk\in \NN^r\}$$ of $(X_{\newt(f)},\partial X_{\newt(f)})$ over $\CC[[\bft_f]]$ (see Definition \ref{def 5757}), such that the restriction of $F_\bfk(\bfx,u,\bft_f)$ to $t_m$, i.e., to $F_\bfk(\bfx,u,0,\dots,0,t_m,0,\dots,0)$ coincides with the above mentioned one-parameter deformation over $\CC[[t_m]]$, for every $t_m\in \bft_f$ and $\bfk\in \NN^{r}$. Then, 
$$\{\mut_{t_{m}}F_\bfk(\bfx,u,\bft_f)\mid \bfk\in \NN^r\}$$
is a deformation of $(X_{\newt(\mut_m^gf)},\partial X_{\newt(\mut_m^gf)})$ over $\CC[[\bft_{\mut_m^gf}]]$, where $$\mut_{t_{m}}F_\bfk(\bfx,u,\bft_f)$$ is defined in Definition \ref{mmutable def}. Roughly speaking, $\mut_{t_{m}} F_\bfk(\bfx, u, \bft_f)$ is obtained from $F_\bfk(\bfx, u, \bft_f)$ by replacing the variables $\bfx$ (corresponding to a set $H$ of generators of $S_{\newt(f)}$) with variables corresponding to the set $\xi_{(m,g)}(H)$, which is a generating set of $S_{\newt(\mut_m^g f)}$, and by replacing each deformation parameter $t_s\in \bft_f$ with $t_{\psi_{(m,g)}s}\in \bft_{\mut_m^gf}$. This means that a mutation also transforms the deformation equations (and their linear relations) of $(X_{\newt(f)},\partial X_{\newt(f)})$ into deformation equations (and their linear relations) of $(X_{\newt(\mut_m^g f)}, \partial X_{\newt(\mut_m^g f)})$. Precise statements are given and proven in Theorems \ref{th main 1} and \ref{th main 2}.

The second main step is to show that a Laurent polynomial $f$ is mutation equivalent to a Laurent polynomial $g$ for which there exists a lattice point $v\in \newt(g)$ such that $m(v)\leq 0$ for all $m\in \cM(g)$. This is proved constructively in Theorem \ref{th smoothable}. 

These two steps show that $(X_{\newt(f)}, \partial X_{\newt(f)})$ is unobstructed in $\bft_f$, since $$(X_{\newt(g)}, \partial X_{\newt(g)})$$ is unobstructed in $\bft_g$ for cohomological reasons (see Theorem \ref{th main3}). Moreover, the smoothness of the general fibre is preserved under mutations, and thus the general fibre is smooth if and only if $f$ is $0$-mutable (see Theorem \ref{th smooth}).

In Section \ref{sec tangspace} we show that 
 $$T^1_{\paar}=\bigoplus_{k\in \NN}T^1_{\paar}(-kR^*)\bigoplus_{m\in \cT} T^1_{\paar}(-m),$$
where $\dim_\CC T^1_{\paar}(-m)=1$, if $m\in \cT$.  Applying these cohomology computations to the deformation family of $(X_{\newt(f)}, \partial X_{\newt(f)})$ over $\CC[[\bft_f]]$, we see that restricting this family to $\CC[[\bfTT_f]]$ establishes all the claims in Theorem \ref{thmm}, except for the final one, which is proved in Theorem \ref{th dis kon}.

In \cite{Fil23}, we showed that the deformations induced by the elements of $$\bigoplus_{k \in \mathbb{N}} T^1_{\paar}(-kR^*),$$ are related to Minkowski decompositions of $P$, where $X=X_P$. On the other hand, by Theorem \ref{thmm} of this paper, we see that the deformations induced by the elements of $\bigoplus_{m \in \cT} T^1_{\paar}(-m)$ are related to Laurent polynomials whose Newton polynomial is $P$. We show how to combine these two types of deformations in Section \ref{comp sec} (see Corollary \ref{cor sec 8}). This provides strong evidence for \cite[Conjecture~A]{CFP22}, and additionally suggests that all components of the miniversal deformation space of the three-dimensional affine toric Gorenstein pair $(X_P, \partial X_P)$ correspond to maximally mutable Laurent polynomials $f$ with $\newt(f)=P$ (see Conjecture \ref{sing comp conj}). 
We conclude the paper by outlining
how we expect the methods developed here to lead to a construction of Fano manifolds with a very
ample anticanonical bundle (see Section \ref{zad sec}).

\subsection*{Acknowledgement}
I am grateful to Alessio Corti for many insightful discussions.

\section{Preliminaries}\label{mut def}

\subsection{The setup}\label{sub the setup}
We fix the ground field to be $\mathbb{C}$, an algebraically closed field of characteristic zero.
Let $P$ be a lattice polytope with vertices $v^1, \dots, v^p$ in a lattice $N$. By embedding $P$ at height $1$, we obtain the rational polyhedral cone $$\sigma:=\left\{\sum_{i=1}^p\lambda_ia^i\mid \lambda_i\in \RR_{\geq 0}\right \}\subset (N \oplus \mathbb{Z}) \otimes_{\mathbb{Z}} \mathbb{R},$$
where $a^i = (v^i,1)\in N\oplus \ZZ$ for $i = 1, \dots, p$.  
Let $M$ be the dual lattice of $N$, and consider the monoid  
$$S_P := \sigma^\vee \cap (M \oplus \mathbb{Z}),$$
where  
$$\sigma^\vee := \{ r \in (M \oplus \mathbb{Z}) \otimes_{\mathbb{Z}} \mathbb{R} \mid \langle n, r \rangle \geq 0 \text{ for all $n\in \sigma$}\}$$
is the dual cone of $\sigma$.  
Every affine Gorenstein toric variety is isomorphic to  
$$X := X_P := \operatorname{Spec} \mathbb{C}[S_P]$$  
for some lattice polytope $P$. 
We set  
$$\widetilde{M} := M \oplus \mathbb{Z}, \quad \tilde{N} := N \oplus \mathbb{Z},$$  
and denote the projections  
$$
\pi_M: \widetilde{M} \to M, \quad \pi_{\mathbb{Z}}: \widetilde{M} \to \mathbb{Z},
$$
which we will use throughout the paper.

\begin{definition}\label{d:eta(c)}
For a polytope $Q\subset N_\RR:=N\otimes_\ZZ \RR$ and $c\in M_\RR:=M\otimes_\ZZ \RR$ we choose a vertex $v_Q(c)$ of $Q$ where $\lan c,\cdot \ran$ becomes minimal.
For $c\in M$ we define $$\eta_Q(c) :=-\min_{v\in Q}\langle v,c\rangle=
-\langle v_Q(c),c\rangle.$$ 
\end{definition}
The Hilbert basis of $S_P = \sigma^{\vee} \cap (M \oplus \mathbb{Z})$ is given by  
\begin{equation}\label{eg hilbbas}
H_P := \big\{ s_1 = (c_1, \eta_P(c_1)), \dots, s_r = (c_r, \eta_P(c_r)), R^* \big\},
\end{equation}
where $R^*:=(\underline{0},1)$ is the \emph{Gorenstein degree} and the elements $c_i \in M$ are uniquely determined.

\subsection{Mutations}\label{subsec mut 2}

\begin{definition}\label{norm laurent}
A Laurent polynomial $f=\sum_v a_v \chi^v \in \CC[N]$ is called \emph{normalized} if $a_v = 1$ for every vertex $v$ of its Newton polytope $\newt(f)$.
\end{definition}

All Laurent polynomials in this paper are assumed to be normalized. 
The element $$m=(\pi_M(m),\pi_\ZZ(m)) \in \widetilde{M}=M\oplus \ZZ$$ defines an affine function $\varphi_m$ on $N$ (and thus on $\newt(f)$) by  
\begin{equation}\label{eq varphim}
\varphi_m(n) := \langle \pi_M(m), n \rangle + \pi_\ZZ(m).
\end{equation}

For an element $m\in \tM$ and $k\in \ZZ$ we denote the affine hyperplanes 
$$
(m=k):=\{a\in \tN~|~\lan a,m\ran=k\},$$ 
$$(\pi_M(m)=k):=\{a\in N~|~\lan a,\pi_M(m)\ran=k\}.$$

\begin{definition}\label{def mmut}
For $g \in \mathbb{C}[N]$ and $m\in \tM$ such that $\newt(g)\subset (\pi_M(m)=0)$, we say that $f\in \CC[N]$ is \emph{$(m,g)$-mutable} if it can be written as  
\begin{equation}\label{eq mmutable}
f = \sum_{i\in \ZZ} f_i,
\qquad
\text{where}
\qquad
f_i \in \mathbb{C}[(\varphi_m = i) \cap N] \subset \CC[N],
\end{equation}
such that, for $i \in \NN$, the quotient $\frac{f_i}{g^i}$ is a Laurent polynomial (that is, $f_i = h_i g^i$ for some Laurent polynomial $h_i$).

\end{definition}  

\begin{definition}\label{def mutation}
A \emph{mutation of an $(m,g)$-mutable Laurent polynomial $f$}, with respect to the chosen pair $(m,g)$, is the Laurent polynomial  
\begin{equation}\label{eq mut h}
\mut^{g}_{m} f := \sum_{i\in \mathbb{Z}} \frac{f_i}{g^i}.
\end{equation}
\end{definition}

\begin{example}\label{ex 1}
Let  
$$f=1+2y+y^2+xy^2,~~~~m=(0,2,-3),~~~~g=1+x.$$  
We compute  
$$\varphi_m(0,0)=-3,~~~\varphi_m(0,1)=-1,~~~\varphi_m(0,2)=\varphi_m(1,2)=1.$$  
The two polytopes in Figure~\ref{fig1} represent the Newton polytopes of $f$ and its mutation, given by  
$$\mut^{g}_{m}f=1+3x+3x^2+x^3+2y+2xy+y^2.$$
\end{example}

\begin{figure}[htb]
\centering
\begin{tikzpicture}[scale=0.8]
\draw[thick, color=black]  
  (0,0) -- (0,2) -- (1,2) -- cycle;
\fill[thick, color=black]
  (0,0) circle (2.5pt) 
  (0,1) circle (2.5pt)
  (0,2) circle (2.5pt)
  (1,2) circle (2.5pt);
\node[anchor=south west] at (0,0) {$1$};
\node[anchor=south west] at (0,1) {$2$};
\node[anchor=south west] at (0,2) {$1$};
\node[anchor=south west] at (1,2) {$1$};
\end{tikzpicture}
\quad
\begin{tikzpicture}[scale=0.8]
\draw[thick, color=black]  
  (0,0) -- (3,0) -- (0,2) -- cycle;
\fill[thick, color=black]
  (0,0) circle (2.5pt) 
  (0,1) circle (2.5pt)
  (0,2) circle (2.5pt)
  (1,0) circle (2.5pt)
  (1,1) circle (2.5pt)
  (2,0) circle (2.5pt) 
  (3,0) circle (2.5pt);
\node[anchor=south west] at (0,0) {$1$};
\node[anchor=south west] at (0,1) {$2$};
\node[anchor=south west] at (0,2) {$1$};
\node[anchor=south west] at (1,0) {$3$};
\node[anchor=south west] at (2,0) {$3$};
\node[anchor=south west] at (3,0) {$1$};
\node[anchor=south west] at (1,1) {$2$};
\end{tikzpicture}
\caption{Newton polytopes of $f$ (left) and $\mut^{g}_{m} f$ (right).}
\label{fig1}
\end{figure}

\subsection{Deformation pairs}

\begin{definition}\label{def def pair}
Every pair $(m,Q)$, with $m\in \tM$ and $Q\subset (\pi_M(m)=0)\subset N$ a lattice polytope, is called a \emph{deformation pair} of $P$ if, for every $i \in \mathbb{N}$ such that $P \cap (\varphi_m = i)$ is nonempty, the polytope $iQ$ is a Minkowski summand of $P \cap (\varphi_m = i)$. 
\end{definition}

\begin{lemma}\label{lem mgex}
If $(m,Q)$ is a deformation pair of $P$, then there exist Laurent polynomials $f$ and $g$ such that $\newt(f)=P$ and $\newt(g)=Q$, and $f$ is $(m,g)$-mutable. Conversely, if $f$ is a Laurent polynomial that is $(m,g)$-mutable, then $(m,\newt(g))$ is a deformation pair of $\newt(f)$.
\end{lemma}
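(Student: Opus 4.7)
For the forward direction, I would construct $f$ and $g$ explicitly from $(m,Q)$. Take $g$ to be any normalized Laurent polynomial with $\newt(g)=Q$ (for instance $g=\sum_{v\in Q\cap N}\chi^{v}$). For each $i\in\NN$ with $P\cap(\varphi_{m}=i)\neq\emptyset$, the deformation pair hypothesis furnishes a (possibly rational) polytope $R_i\subset(\varphi_m=i)$ with $iQ+R_i=P\cap(\varphi_{m}=i)$. Any lattice vertex $v$ of $P$ at level $i$ is also a vertex of this slice, and by examining the Minkowski decomposition along a supporting direction I would obtain $v=u_v+w_v$ with $u_v$ a lattice vertex of $iQ$ and $w_v=v-u_v$ a (necessarily lattice) vertex of $R_i$. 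I would then pick any lattice polytope $R_i^{\mathrm{lat}}\subset R_i$ containing all such $w_v$, take $h_i$ normalized with $\newt(h_i)=R_i^{\mathrm{lat}}$, and set $f_i:=h_ig^i$; for levels $i\in\NN$ whose slice meets no vertex of $P$, one may even take $h_i=0$. For $i\in\ZZ\setminus\NN$ with slice non-empty, I would pick $f_i$ directly with support covering the lattice vertices of $P$ at level $i$ and contained in $P\cap(\varphi_m=i)$. Then $f:=\sum_if_i$ is $(m,g)$-mutable by construction, and $\newt(f)=P$ because every vertex of $P$ lies in the support of $f$ while each $\newt(f_i)\subset P$.

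For the converse direction, write $f=\sum_jf_j$ with $f_j=h_jg^j$ for $j\in\NN$, and set $P:=\newt(f)$, $Q:=\newt(g)$. The main obstacle is that $P\cap(\varphi_{m}=i)$ can strictly contain $\newt(f_i)$ (or be non-empty when $f_i=0$), and need not even be a lattice polytope, because of convex interpolation between slices at different levels. I would handle this uniformly via support functions. For a polytope $A$, let $h_A(c):=\max_{a\in A}\langle a,c\rangle$. Since $P=\conv\bigcup_j\newt(f_j)$, a standard LP argument gives
\[
h_{P\cap(\varphi_m=i)}(c)\;=\;\max_{\lambda\in\Lambda_i}\sum_j\lambda_j\,h_{\newt(f_j)}(c),\qquad \Lambda_i:=\Bigl\{\lambda\ge 0 : \textstyle\sum_j\lambda_j=1,\ \sum_jj\lambda_j=i\Bigr\}.
\]
For $j\ge0$ I would substitute $h_{\newt(f_j)}=h_{\newt(h_j)}+j\,h_Q$ and use $\sum_jj\lambda_j=i$ to factor out $i\,h_Q(c)$, which rearranges the identity into
\[
h_{P\cap(\varphi_m=i)}(c)-i\,h_Q(c)\;=\;\max_{\lambda\in\Lambda_i}\sum_j\lambda_j\,\tilde h'_j(c),
\]
where $\tilde h'_j:=h_{\newt(h_j)}$ for $j\ge0$ and $\tilde h'_j:=h_{\newt(f_j)+|j|Q}$ for $j<0$; each $\tilde h'_j$ is a genuine support function. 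Since $\Lambda_i$ has finitely many vertices, the right-hand side is a maximum of finitely many non-negative combinations of sublinear piecewise-linear functions, hence itself sublinear and piecewise-linear in $c$, and therefore the support function of a polytope $R_i^*$. This yields $P\cap(\varphi_m=i)=iQ+R_i^*$, the desired Minkowski summand property.

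The hard part is clearly the converse, and specifically the role of the negative-level summands of $f$: they enlarge $P\cap(\varphi_m=i)$ for $i\in\NN$ through convex interpolation across slices, and the $|j|\,h_Q$ correction in $\tilde h'_j$ for $j<0$ is exactly the bookkeeping that absorbs these contributions back into the Minkowski summand structure. Having this framework in place also makes it transparent that the forward construction only needs the existence of lattice polytopes $R_i^{\mathrm{lat}}\subset R_i$ covering the vertex requirements, not that $R_i$ be lattice itself.
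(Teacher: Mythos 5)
Your proof is correct. The forward direction is essentially the paper's own construction (choose $g$ arbitrary with $\newt(g)=Q$ and build $f=\sum_i f_i$ with $f_i=g^i g_i'$), but you add a genuinely useful detail the paper leaves implicit: since the complementary Minkowski summand $R_i$ of $P\cap(\varphi_m=i)$ need not be a lattice polytope, one must check that each lattice vertex $v$ of $P$ at level $i$ splits as $u_v+w_v$ with $u_v$ a (lattice) vertex of $iQ$, forcing $w_v$ to be a lattice point of $R_i$; this is exactly what makes the choice of $h_i$ possible. Where you truly diverge is the converse: the paper dismisses it with ``follows immediately by definition,'' which glosses over the fact that $P\cap(\varphi_m=i)$ can strictly contain $\newt(f_i)$ (indeed can be nonempty with $f_i=0$) because of convex interpolation between levels, so the summand property is not literally a restatement of $(m,g)$-mutability. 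Your support-function argument --- writing $h_{P\cap(\varphi_m=i)}$ as a maximum over the polytope $\Lambda_i$ of convex weights, substituting $h_{\newt(f_j)}=h_{\newt(h_j)}+j\,h_Q$ for $j\ge 0$, and absorbing the negative-level contributions via the $|j|\,h_Q$ correction so that $h_{P\cap(\varphi_m=i)}-i\,h_Q$ is exhibited as a finite maximum of support functions, hence itself a support function --- is a complete and correct proof of this point, and in my view is the argument the paper should have recorded. The trade-off is only one of length: the paper's version is a two-line assertion adequate for readers willing to take the slicing behaviour on faith, while yours is the version that actually verifies it and, as a bonus, makes transparent why $R_i$ being merely rational causes no trouble in the forward direction.
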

\begin{proof}
If $(m,Q)$ is a deformation pair, we can choose arbitrary $g$ with $\newt(g)= Q$ and $f=\sum_{i\in \ZZ}f_i$ such that $f_i=g^ig_i'$ for some Laurent polynomials $g'_i$, where each $f_i\subset \CC[(\varphi_m=i)\cap N]$ and $\newt(f)=P$. The other direction follows immediately by definition.
\end{proof}

\begin{definition}\label{def 210}
For a deformation pair $(m,Q)$ of $P$, we choose Laurent polynomials $f$ and $g$ such that $f$ is $(m,g)$-mutable, $\newt(f) = P$, and $\newt(g)=Q$. We then define the polytope  
\begin{equation}\label{eq pQm}
P_{(m,Q)}:=\newt(\mut^{g}_{m}f),
\end{equation}
which we call a \emph{mutation of $P$} by $(m,Q)$.
\end{definition}

\begin{remark}
For a different choice of $f$ and $g$, with $\newt(f)=P$, $\newt(g)=Q$, and $f$ being $(m,g)$-mutable, we obtain the same polytope $P_{(m,Q)}$. 
\end{remark}

For $m\in \tM$ and a polytope $Q\subset N_\RR$, we define the map 
\begin{equation}\label{eq ximq}
\xi_{(m,Q)}:\tM\to \tM,~~~ \xi_{(m,Q)}(h):=h-\left(\eta_Q(\pi_M(h))\right) m.
\end{equation}
Note that this map is piecewise linear and we will only use it when $(m,Q)$ is a deformation pair of $P$. 

\begin{lemma}\label{st lem}
    Let $(m,Q)$ be a deformation pair of $P$. The map $\xi_{(m,Q)}$ maps the monoid $S_P$ bijectively into the monoid $S_{P_{(m,Q)}}$, with inverse equal to $\xi_{(-m,Q)}.$
\end{lemma}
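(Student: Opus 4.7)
The plan is to show that $\xi := \xi_{(m,Q)}$ and $\xi_{(-m,Q)}$ are mutually inverse bijections between $S_P$ and $S_{P'}$, where $P' := P_{(m,Q)} = \newt(\mut^g_m f)$. I will proceed in three steps: (i) verifying the inverse identity on all of $\tM$; (ii) proving the forward containment $\xi(S_P)\subset S_{P'}$; (iii) obtaining the reverse containment by symmetry.

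For step (i), I will use that $Q\subset(\pi_M(m)=0)$, so that $\langle q,\pi_M(m)\rangle=0$ for every $q\in Q$. This makes $\eta_Q$ unchanged under translation of its argument by multiples of $\pi_M(m)$, and a short calculation then gives $\xi_{(-m,Q)}\circ\xi=\operatorname{id}_{\tM}$ and the other composition. In particular $\xi$ is already a bijection on $\tM$, so it only remains to see that the two monoids are exchanged.

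For step (ii), fix $h\in S_P$, set $c:=\pi_M(h)$ and $\eta:=\eta_Q(c)$, and compute
\[
\varphi_{\xi(h)}(v') \;=\; \varphi_h(v') - \eta\,\varphi_m(v').
\]
Non-negativity of this affine function on $P'$ is equivalent to $\xi(h)\in S_{P'}$, and by affineness it suffices to test at the vertices of $P'$. Choose Laurent polynomials $f$ and $g$ with $\newt(f)=P$, $\newt(g)=Q$, and $f$ being $(m,g)$-mutable (Lemma~\ref{lem mgex}); writing $\mut^g_m f=\sum_i \tilde{f}_i$ with $\tilde{f}_i:=f_i/g^i$, we have $P'=\conv\bigl(\bigcup_i \newt(\tilde{f}_i)\bigr)$, and every vertex of $P'$ is necessarily a vertex of some $\newt(\tilde{f}_i)$. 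I will then split into two cases. If $i\geq 0$, then $f_i=h_i g^i$ gives $\newt(f_i)=\newt(\tilde{f}_i)+iQ$, hence $v'+iQ\subset\newt(f_i)\subset P$; choosing $q\in Q$ with $\langle q,c\rangle=-\eta$ yields $\varphi_h(v'+iq)\geq 0$, i.e. $\varphi_h(v')\geq i\eta=\varphi_m(v')\eta$. If $i<0$, then $\tilde{f}_i=f_i g^{-i}$ gives $\newt(\tilde{f}_i)=\newt(f_i)+(-i)Q$, so any vertex factors as $v'=b+(-i)q$ with $b$ a vertex of $\newt(f_i)\subset P$ and $q$ a vertex of $Q$; then $\varphi_h(v')=\varphi_h(b)+(-i)\langle q,c\rangle\geq 0+(-i)(-\eta)=i\eta=\varphi_m(v')\eta$, using $\varphi_h(b)\geq 0$ and $\langle q,c\rangle\geq -\eta$. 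Step (iii) uses that mutation is involutive: a short level-by-level check gives $\mut^g_{-m}\mut^g_m f=f$, whence $(P')_{(-m,Q)}=P$, and applying step (ii) with $(m,Q)$ replaced by $(-m,Q)$ and with the roles of $P,P'$ swapped yields $\xi_{(-m,Q)}(S_{P'})\subset S_P$.

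The main delicate point will be the vertex analysis in step (ii). It is tempting but false to hope that $P'\cap(\varphi_m=i)$ coincides with $\newt(\tilde{f}_i)$: the convex hull defining $P'$ can interpolate between exponent levels, so $P'\cap(\varphi_m=i)$ may strictly contain $\newt(\tilde{f}_i)$ (as already happens for $i=-1$ in Example~\ref{ex 1}, where $P'\cap(\varphi_m=-1)$ is the segment from $(0,1)$ to $(1.5,1)$ while $\newt(\tilde{f}_{-1})$ is the sub-segment from $(0,1)$ to $(1,1)$). Reducing to the vertices of $P'$, where the identification with vertices of some $\newt(\tilde{f}_i)$ is automatic, is what makes the argument go through; affineness of $\varphi_h-\eta\,\varphi_m$ then extends the inequality to all of $P'$.
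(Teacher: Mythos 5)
Your proof is correct, and at the level of strategy it follows the same route as the paper: verify the defining inequalities of $S_{P_{(m,Q)}}$ on the mutated polytope using the Minkowski structure of the slices and the vertex of $Q$ minimizing $\langle\cdot,c\rangle$, then get the reverse inclusion from the symmetric statement for $-m$. The difference lies in the execution of the key estimate, and your version is the more careful one. The paper argues that for every $w\in P_{(m,Q)}\cap(\varphi_m=i)$ and \emph{every} $q\in Q$ one has $w+iq\in P\cap(\varphi_m=i)$; as you note, this is false in general --- already in Example~\ref{ex 1} the slice $P_{(m,Q)}\cap(\varphi_m=-1)$ strictly contains $\newt(f_{-1}/g^{-1})$, and for $i<0$ the relation $\newt(f_i/g^{i})=\newt(f_i)+(-i)Q$ is a Minkowski \emph{sum}, so translating by $iq$ (even for the minimizing $q$) can leave $P$: in that example $w=(3,0)$ at level $i=-3$ with $c=(1,0)$ gives $w+iq_{\min}=(3,0)\notin P$, yet the required inequality still holds. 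Your reduction to vertices of $P_{(m,Q)}$, identified with vertices of the level polytopes, together with the case split on the sign of $i$ --- $v'+iQ\subset\newt(f_i)\subset P$ for $i\geq 0$, and the vertex decomposition $v'=b+(-i)q$ with $b\in\newt(f_i)\subset P$ for $i<0$ --- closes this gap cleanly. You also make explicit the composition identity $\xi_{(-m,Q)}\circ\xi_{(m,Q)}=\mathrm{id}_{\tM}$ (via the invariance of $\eta_Q$ under translation of its argument by multiples of $\pi_M(m)$) and the involutivity $\mut^{g}_{-m}\mut^{g}_{m}f=f$, which the paper leaves implicit but which are exactly what is needed for the bijectivity and inverse claims in the statement.
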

\begin{proof}
Let $(c, \eta_P(c)) \in \partial S_P$ for some $c\in \tM$. In particular, we know that 
\begin{equation}\label{eq vic}
\langle (v, 1), (c, \eta_P(c)) \rangle \geq 0 \quad \text{for all vertices } v \in P.
\end{equation} 
Let us show that 
$$
\langle (w, 1), \xi_{(m,Q)}((c, \eta_P(c))) \rangle \geq 0 \quad \text{for all vertices } w \text{ of } P_{(m,Q)}.
$$ 
Since 
$$
\xi_{(m,Q)}((c, \eta_P(c))) = (c, \eta_P(c)) - \eta_Q(c) \cdot m,
$$
we can easily verify that the claim holds for all $w \in P_{(m,Q)}\cap (\varphi_m = i)$, for every $i \in \mathbb{Z}$. Indeed, we need to show that
\begin{equation}\label{eq vic 2}
\langle w, c \rangle + \eta_P(c) - i \cdot \eta_Q(c) \geq 0
\end{equation}
for every $w \in P_{(m,Q)}\cap (\varphi_m = i)$. This follows since $w + i q \in P \cap (\varphi_m = i)$ for every $q \in Q$. Choose $q \in Q$ such that $\langle \cdot, c \rangle$ achieves its minimum on $Q$ at $q$. Then the inequality \eqref{eq vic 2} follows from \eqref{eq vic}. 
\end{proof}

\begin{example}\label{ex 2}
Let $P=\conv\{(0,0),(0,2),(1,2)\}$, $m=(0,2,-3)$, and $Q=\conv\{(0,0),(1,0)\}$. 
Note that $P=\newt(f)$ and $Q= \newt(g)$ from Example \ref{ex 1}, and thus  
$$P_{(m,Q)}=\newt(\mut^{g}_{m}f)=\conv\{(0,0),(3,0),(0,2)\}.$$  
The Hilbert basis of the semigroup $S_P$ is  
$$H_P=\{z_1,~z_4,~z_5,~z_6,~R^*=(0,0,1)\},$$
where 
$$z_1=(-2,1,0),~z_4=(-1,0,1),~z_5=(0,-1,2),~z_6=(1,0,0).$$
The Hilbert basis of the semigroup $S_{P_{(m,Q)}}$ is  
$$H_{P_{(m,Q)}}=\{s_1,s_2,s_3,s_4,s_5,s_6,R^*\},$$  
where  
$$s_1=(-2,-3,6),~s_2=(-1,-1,3),~s_3=(0,1,0),$$
$$s_4=(-1,-2,4),~s_5=(0,-1,2),~s_6=(1,0,0).$$  
The Hilbert bases of both are illustrated in the first and third images of Figure \ref{fig 2}, where the numbers indicate the third coordinate of $s_i$ and $z_i$, respectively.  

We see that  
$$\xi_{(m,Q)}(z_4)=z_4-m=(-1,-2,4)\in H_{P_{(m,Q)}}.$$  
Likewise, we see that  
$$\xi_{(m,Q)}(z_i) = s_i \quad \text{for all } i \in \{1, \dots, 6\}.$$
Note that the set $\{z_1,\dots,z_6,R^*\}$ is only a generating set of $S_{P}$, and not its Hilbert basis, which is the minimal generating set.
\end{example}

\begin{figure}[htb]
\begin{tikzpicture}[scale=0.8]
\fill[thick,  color=black]
  (0,0) circle (1.5pt) 
  (-1,0) circle (1.5pt)
  (0,-1) circle (1.5pt)
  (-2,1) circle (1.5pt)
    (1,0) circle (1.5pt);
  \draw[->]        (0,0)   -- (-2,1);
      \draw[->]        (0,0)   -- (0,-1);
          \draw[->]        (0,0)   -- (1,0);
     \node[anchor=south west] at (0,0) {$1$};
    \node[anchor=south west] at (-1,0) {$1$};
     \node[anchor=south west] at (0,-1) {$2$};
       \node[anchor=south west] at (-2,1) {$0$};
        \node[anchor=south west] at (1,0) {$0$};
\end{tikzpicture}
\begin{tikzpicture}[scale=0.8]
\fill[thick,  color=black]
  (0,0) circle (1.5pt) 
  (-1,0) circle (1.5pt)
  (0,-1) circle (1.5pt)
   (0,1) circle (1.5pt)
  (-1,1) circle (1.5pt)
  (-2,1) circle (1.5pt)
    (1,0) circle (1.5pt);
  \draw[->]        (0,0)   -- (-2,1);
      \draw[->]        (0,0)   -- (0,-1);
          \draw[->]        (0,0)   -- (1,0);
     \node[anchor=south west] at (0,0) {$R^*$};
    \node[anchor=south west] at (-1,0) {$z_4$};
     \node[anchor=south west] at (0,1) {$z_3$};
     \node[anchor=south west] at (0,-1) {$z_5$};
       \node[anchor=south west] at (-1,1) {$z_2$};
       \node[anchor=south west] at (-2,1) {$z_1$};
        \node[anchor=south west] at (1,0) {$z_6$};
\end{tikzpicture}
\\
\begin{tikzpicture}[scale=0.8]
\fill[thick,  color=black]
  (0,0) circle (1.5pt) 
  (1,0) circle (1.5pt)
  (0,1) circle (1.5pt)
    (-1,-2) circle (1.5pt)
  (-1,-1) circle (1.5pt)
  (-2,-3) circle (1.5pt)
    (0,-1) circle (1.5pt);
  \draw[->]        (0,0)   -- (-2,-3);
      \draw[->]        (0,0)   -- (0,1);
          \draw[->]        (0,0)   -- (1,0);
     \node[anchor=south west] at (0,0) {$1$};
     \node[anchor=south west] at (0,1) {$0$};
     \node[anchor=south west] at (0,-1) {$2$};
       \node[anchor=south west] at (-1,-2) {$4$};
       \node[anchor=south west] at (-2,-3) {$6$};
        \node[anchor=south west] at (-1,-1) {$3$};
        \node[anchor=south west] at (1,0) {$0$};
\end{tikzpicture}
~~~
\begin{tikzpicture}[scale=0.8]
\fill[thick,  color=black]
  (0,0) circle (1.5pt) 
  (1,0) circle (1.5pt)
  (0,1) circle (1.5pt)
    (-1,-2) circle (1.5pt)
  (-1,-1) circle (1.5pt)
  (-2,-3) circle (1.5pt)
    (0,-1) circle (1.5pt);
  \draw[->]        (0,0)   -- (-2,-3);
      \draw[->]        (0,0)   -- (0,1);
          \draw[->]        (0,0)   -- (1,0);
     \node[anchor=south west] at (0,0) {$R^*$};
     \node[anchor=south west] at (0,1) {$s_3$};
     \node[anchor=south west] at (0,-1) {$s_5$};
       \node[anchor=south west] at (-1,-2) {$s_4$};
       \node[anchor=south west] at (-2,-3) {$s_1$};
        \node[anchor=south west] at (-1,-1) {$s_2$};
        \node[anchor=south west] at (1,0) {$s_6$};
\end{tikzpicture}
\caption{A generating set of $S_{\newt(f)}$ and $S_{\newt(\mut_m^gf)}$.}
\label{fig 2}
\end{figure}

\section{Constructing one-parameter deformations}\label{sec 3}

This section provides an explicit construction of a one-parameter deformation arising from a deformation pair, formulated in terms of the defining equations.

\subsection{Linear relations between the generators}\label{sub31}

Recall the Hilbert basis \eqref{eg hilbbas} of $S:=S_P=\sigma^\vee\cap \tM$. We have  
$$X_P=\operatorname{Spec} \mathbb{C}[S_P] \cong \operatorname{Spec} \mathbb{C}[u, x_1, \dots, x_r] / \mathcal{I}_P,$$  
for some ideal $\mathcal{I}_P$ (described in Proposition \ref{acf1} below), where $u$ corresponds to $R^*$ and each $x_j$ corresponds to $s_j$ for $j = 1, \dots, r$.
For every $\bfk=(k_1,...,k_r)\in \NN^r$
we denote 
$$s_\bfk:=\sum_{i=1}^rk_is_i\in S\subset \tM,~~~~~c_{\bfk}:=\sum_{i=1}^rk_ic_i\in M$$ and for a polytope $Q\subset N_\RR$ let 
$$\eta_Q(\bfk):=\sum_{i=1}^r\eta_Q(k_ic_i)-\eta_Q\left (\sum_{i=1}^rk_ic_i\right ).$$
For every element $s\in S$ we have a unique decomposition $s=\partial_P(s)+n_P(s)R^*$ with 
$$\partial_P(s)\in \partial(S):=\{s\in S~|~s-R^*\not\in S\}\text{~~~and~~~}n_P(s)\in \NN.$$ Let us denote 
\begin{equation}\label{eq chis 1}
\chi^s_P:=\bfx^{\partial_P(s)}u^{n_P(s)}.
\end{equation}

We have $s_{\bfk}=\bo_P(\bfk)+\eta_P(\bfk)R^*$ with $\bo_P(\bfk):=(c_\bfk,\eta_P(c_\bfk))\in \partial(S)$. 
In this section we simply write $\partial(\bfk)=\partial_P(\bfk)$, $\eta(\bfk)=\eta_P(\bfk)$ and $\chi^s=\chi^s_P$ for any $s\in S=S_P$.
For every $\bfk\in \NN^r$ we choose $b_i\in \NN$ such that 
$
 \bo(\bfk)=\sum_{i=1}^rb_is_i, 
$
and we define
$$\bfx^{\bfk}:=\prod_{i=1}^rx^{k_i}_i,~~~\bfx^{\bo(\bfk)}:=\prod_{i=1}^rx_i^{b_i}.$$

\begin{remark}\label{rem par remm}
For simplicity, we will also write $\partial(\bfk) = (b_1, \dots, b_r) \in \mathbb{N}^r$, and we will distinguish from context whether $\partial(\bfk)$ refers to the vector in $\mathbb{N}^r$ or to the element $\sum_{i=1}^r b_i s_i \in \tM$. 
More generally, for each element $s \in S$, we will distinguish from the context whether $\partial(s)$ refers to the tuple $(p_1, \dots, p_r) \in \NN^r$ or to the sum $\sum_{i=1}^r p_is_i \in \partial(S)$.
For example, this convention will be used in the proof of Proposition \ref{th main 2}.
\end{remark}

\begin{proposition}[{\cite[Section 5]{ACF22a}}]\label{acf1}
The binomials 
\begin{equation}\label{eq fbfk xu2}
f_\bfk(\bfx,u):=f_{\bfk,P}(\bfx,u):=\bfx^\bfk - \bfx^{\bo(\bfk)}\,u^{\eta(\bfk)}\in \CC[u,\bfx]:=\CC[u,x_1,...,x_r]
\end{equation}
generate the ideal $\cI_P$ and the module of linear relations among the $f_\bfk$, which is the kernel of the map
$$
  \textstyle
  \psi:\bigoplus_{\bfk\in \NN^r}\CC[u,x_1,\dots,x_r]e_\bfk\xrightarrow{e_\bfk\mapsto f_\bfk} \cI_P\subset \CC[u,x_1,\dots,x_r],
$$
 is spanned by $r_{\bfa,\bfk}:= e_{\bfa+\bfk} - x^\bfa e_\bfk - u^{\eta(\bfk)}e_{\bo(\bfk)+\bfa}$, for $\bfa,\bfk\in \NN^r$. 
\end{proposition}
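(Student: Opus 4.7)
My plan is to establish both claims by systematically exploiting the uniqueness of the Gorenstein decomposition $s = \bo(s) + \eta_P(s) R^*$ for $s \in S$, which encodes the Gorenstein hypothesis on $\sigma$. For the generation of $\cI_P$, I would use the fact that $\cI_P$ is toric and hence spanned by binomials $\bfx^\alpha u^a - \bfx^\beta u^b$ with $s_\alpha + a R^* = s_\beta + b R^*$. Reducing modulo $f_\alpha$ replaces $\bfx^\alpha u^a$ by its Gorenstein normal form $\bfx^{\bo(\alpha)}u^{a+\eta(\alpha)}$, and similarly for $\bfx^\beta u^b$; uniqueness of the decomposition forces the two normal forms to coincide, placing the original binomial in the ideal $(f_\bfk \mid \bfk\in \NN^r)$.

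For the syzygies, I would first verify that each $r_{\bfa,\bfk}$ is indeed a relation. A direct expansion of $f_{\bfa+\bfk} - \bfx^\bfa f_\bfk - u^{\eta(\bfk)}f_{\bo(\bfk)+\bfa}$ cancels the leading monomials $\bfx^{\bfa+\bfk}$ and $\bfx^{\bo(\bfk)+\bfa}u^{\eta(\bfk)}$ pairwise, leaving the difference $\bfx^{\bo(\bo(\bfk)+\bfa)}u^{\eta(\bfk)+\eta(\bo(\bfk)+\bfa)} - \bfx^{\bo(\bfa+\bfk)}u^{\eta(\bfa+\bfk)}$. This vanishes because both exponent pairs are the Gorenstein decomposition of the common semigroup element $s_\bfa+s_\bfk$; they agree by uniqueness, since $s_{\bfa+\bfk} = s_{\bo(\bfk)+\bfa} + \eta(\bfk) R^*$.

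The main obstacle is completeness: showing that the $r_{\bfa,\bfk}$ span all linear relations. My plan is to use the natural $\tM$-grading on the presentation, with $\deg e_\bfk = s_\bfk$, so that $\psi$ is graded of degree zero and $\ker\psi$ decomposes into finite-dimensional graded pieces. In each fixed degree $d\in S$, I would set up an iterative reduction: given a homogeneous syzygy $\sum_\bfk g_\bfk e_\bfk$, apply the $r_{\bfa,\bfk}$ to successively shift each term $\bfx^\bfb u^c e_\bfk$ with $|\bfk|\geq 2$ into terms supported on $e_{\bfb+\bfk}$ and $e_{\bo(\bfk)+\bfb}$, thereby concentrating the support on indices $\bfk$ with $|\bfk|\leq 1$; since for such $\bfk$ the polynomial $f_\bfk$ is identically zero, the remaining syzygy is trivial. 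The delicate point is proving termination, for which I would use Dickson's lemma on $\NN^r$ together with a well-founded order on $(|\bfk|, \bfb)$; an alternative route is to compare the Hilbert series of $\ker\psi$ and of $\bigoplus_{\bfa,\bfk}\CC[u,\bfx]e_{\bfa,\bfk}$ graded piece by graded piece, using the known Hilbert series of $\CC[S_P]$.
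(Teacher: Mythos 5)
The paper does not prove this proposition; it is imported wholesale from \cite[Section 5]{ACF22a}, so your argument has to stand on its own. Your first two steps are essentially right: generation follows by reducing each monomial of a $\tM$-homogeneous binomial to its normal form $\bfx^{\bo(\cdot)}u^{(\cdot)}$, and your verification that $r_{\bfa,\bfk}$ is a syzygy is exactly the identity $\eta(\bfa+\bfk)=\eta(\bfk)+\eta(\bo(\bfk)+\bfa)$ that the paper records as Lemma~\ref{lem bfk}(2) for $Q=P$. One point you elide in both places: the decomposition $s=\bo(s)+n_P(s)R^*$ is unique in $\tM$, but the tuple $\bo(\bfk)\in\NN^r$ expressing $\bo(s_\bfk)$ in the generators is a \emph{choice}; for ``uniqueness forces the normal forms to coincide'' you must fix this choice so that it depends only on $c_\bfk$ and is idempotent ($\bo(\bo(\bfk))=\bo(\bfk)$, $\bo(\bfe_i)=\bfe_i$). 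This is how the paper uses the notation, but it should be said explicitly.

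The genuine gap is in the completeness step, and it is a wrong picture rather than a missing termination lemma. Replacing $\bfx^\bfb e_\bfk$ by $e_{\bfb+\bfk}-u^{\eta(\bfk)}e_{\bo(\bfk)+\bfb}$ does not drive the support towards $|\bfk|\le 1$: the new indices $\bfb+\bfk$ and $\bo(\bfk)+\bfb$ are in general longer, so the stated reduction never reaches the claimed normal form. Moreover, a residual syzygy supported on $e_\bfk$ with $f_\bfk=0$ is \emph{not} trivial --- it is a nonzero element of the free module and must still be shown to lie in the span of the $r_{\bfa,\bfk}$; this does hold, because $r_{0,\bfk}=-u^{\eta(\bfk)}e_{\bo(\bfk)}$, and for $\bfb$ with $f_\bfb=0$ one has $\bfb=\bo(\bfb)$, $\eta(\bfb)=0$, hence $e_\bfb=-r_{0,\bfb}$ --- but your write-up skips this. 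The repair is easier than the Dickson/Hilbert-series machinery you propose: a \emph{single} application of $r_{\bfb,\bfk}$ turns each term $\bfx^\bfb u^c e_\bfk$ into $u^c e_{\bfb+\bfk}-u^{c+\eta(\bfk)}e_{\bo(\bfk)+\bfb}$, whose coefficients are pure powers of $u$, so after one pass every syzygy is congruent to one of the form $\sum_j\lambda_j u^{c_j}e_{\bfk_j}$ with distinct pairs $(c_j,\bfk_j)$. Discard the $j$ with $f_{\bfk_j}=0$ using $r_{0,\bfk_j}$; for the remaining $j$ the monomials $u^{c_j}\bfx^{\bfk_j}$ are pairwise distinct and can only cancel against monomials $u^{c_{j'}+\eta(\bfk_{j'})}\bfx^{\bo(\bfk_{j'})}$, which forces $\bfk_j=\bo(\bfk_{j'})$ and hence $f_{\bfk_j}=0$, a contradiction. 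With that substitution your outline closes; as written, the reduction goes in the wrong direction and the endgame claim is unjustified.
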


\begin{example}\label{ex3}
 
    Let $P=\conv\{(0,0),(3,0),(0,2)\}$ with 
    $$s_1=(-2,-3,6), s_2=(-1,-1,3), s_3=(0,1,0),$$ 
    $$s_4=(-1,-2,4), s_5=(0,-1,2), s_6=(1,0,0).$$
   Note that we have already drawn the Hilbert basis of $S=S_P$, since this polytope was the polytope $P_{(m,Q)}$ in Example \ref{ex 2}.
    Let $\bfk=(0,1,0,0,0,1)\in \NN^6$ and $\bfa=(0,0,0,1,0,1)\in \NN^6.$ Then we have 
    $\partial(\bfk)=(0,0,0,0,1,0)$ and the following equations:
    $$f_{\bfk}=x_2x_6-ux_5,~~~f_{\bfa+\bfk}=x_2x_4x_6^2-ux_5^3,~~~f_{\partial(\bfk)+\bfa}=x_4x_5x_6-x_5^3,$$
    where $\partial(\bfa+\bfk)=\partial(\partial(\bfk)+\bfa)=(0,0,0,0,3,0)$.
\end{example}

\begin{lemma}\label{lem bfk}
For $\bfk\in \NN^r$ and polytopes $Q,G\in N_\RR$, the following holds:
\begin{enumerate}
\item $\eta_{Q+G}(\bfk)=\eta_Q(\bfk)+\eta_G(\bfk)$,
\item $\eta_{Q}(\bfa+\bfk)=\eta_{Q}(\bfk)+\eta_{Q}(\partial(\bfk)+\bfa)$.
\end{enumerate}
\end{lemma}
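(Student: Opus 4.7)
The plan is to prove both identities by unwinding the definition
$$\eta_Q(\bfk) = \sum_{i=1}^r k_i\,\eta_Q(c_i) - \eta_Q(c_\bfk),$$
where $c_\bfk := \sum_i k_i c_i$ and I use positive homogeneity $\eta_Q(\lambda c) = \lambda\eta_Q(c)$ for $\lambda \in \NN$ (immediate from $\eta_Q(c) = -\min_{v\in Q}\lan v,c\ran$).

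For (1), the key pointwise input is the Minkowski-sum identity $\eta_{Q+G}(c) = \eta_Q(c) + \eta_G(c)$ for every $c \in M_\RR$, which follows immediately from $\min_{v\in Q+G}\lan v,c\ran = \min_{q\in Q}\lan q,c\ran + \min_{g\in G}\lan g,c\ran$. Substituting this into the expanded formula for $\eta_{Q+G}(\bfk)$ and grouping the $Q$- and $G$-terms separately yields the claim in one line.

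For (2), I would expand each of $\eta_Q(\bfa+\bfk)$, $\eta_Q(\bfk)$, and $\eta_Q(\partial(\bfk)+\bfa)$ using the formula. The crucial simplification is $c_{\partial(\bfk)+\bfa} = c_\bfk + c_\bfa = c_{\bfa+\bfk}$, which follows because the chosen lift $\bo(\bfk) = \sum_i b_i s_i$ projects under $\pi_M$ to $\sum_i b_i c_i = c_\bfk$. After cancelling the terms $\sum_i a_i\eta_Q(c_i)$, $\sum_i k_i\eta_Q(c_i)$, and $\eta_Q(c_\bfk+c_\bfa)$ that appear identically on both sides, the identity collapses to $\eta_Q(c_\bfk) = \sum_i b_i\eta_Q(c_i)$ — i.e., additivity of $\eta_Q$ along the chosen decomposition of $c_\bfk$. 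For $Q = P$ this is tautological from matching the last coordinates in $\bo(\bfk) = \sum_i b_i s_i$, which forces $\eta_P(c_\bfk) = \sum_i b_i\eta_P(c_i)$ and hence that the $c_i$ with $b_i > 0$ share a common vertex of $P$ minimizing $\lan\cdot,c_i\ran$.

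The main technical point, and the step I expect to be the main obstacle, is transferring this common-minimizer property from $P$ to an arbitrary $Q$. This should follow because the convention for $\partial(\bfk)$ picks the $b_i$'s corresponding to Hilbert-basis elements lying in a single top-dimensional cone of the normal fan of $P$, so that a common minimizing vertex persists on $Q$; making this step precise — and thereby justifying the additivity $\eta_Q(c_\bfk) = \sum_i b_i\eta_Q(c_i)$ for all $Q$ — is the crux of the computation.
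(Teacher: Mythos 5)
Your argument follows the paper's proof step for step: part (1) from the pointwise identity $\eta_{Q+G}(c)=\eta_Q(c)+\eta_G(c)$, and part (2) by expanding, cancelling $\sum_i\bigl(\eta_Q(k_ic_i)+\eta_Q(a_ic_i)\bigr)$, using $c_{\partial(\bfk)}=c_\bfk$, and reducing to the single identity $\eta_Q(c_\bfk)=\sum_i b_i\eta_Q(c_i)$. The paper closes the argument at exactly this point by asserting that all $c_i$ with $b_i\neq 0$ share a minimizing vertex with $c_\bfk$, so that $\sum_i\eta_Q(b_ic_i)=\eta_Q\bigl(\sum_i b_ic_i\bigr)$ --- which is precisely the additivity you isolate.

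The step you flag as the crux and leave open is, however, a genuine gap, and not one that a purely formal argument could close: for an arbitrary polytope $Q$ the common minimizer on $P$ does not persist on $Q$, and identity (2) then fails. Concretely, take $P=\conv\{(0,0),(1,0),(0,1),(1,1)\}$ with $s_1=((1,0),0)$ and $s_2=((0,1),0)$ in the Hilbert basis, $\bfk=\bfe_1+\bfe_2$ and $\bfa=0$; then $\partial(\bfk)=\bfk$, so (2) reads $\eta_Q(\bfk)=2\eta_Q(\bfk)$, which fails for $Q=\conv\{(0,0),(1,-1)\}$ since there $\eta_Q(\bfk)=0+1-0=1$. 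What makes the lemma true in every place it is applied is that $Q$ belongs to a deformation pair $(m,Q)$ of $P$: the common minimizing vertex $v^*$ of $P$ for the $c_i$ with $b_i\neq 0$ lies in some slice $P_j=P\cap(\varphi_m=j)$ and still minimizes those $c_i$ on $P_j\subset P$, and because $jQ$ is a Minkowski summand of $P_j$ the minimizer decomposes, yielding a common minimizer on $Q$. Your proposed mechanism (the relevant $c_i$ lie in one maximal cone of the normal fan of $P$, which should map into a cone of the normal fan of $Q$) is the right one, but it requires exactly this compatibility hypothesis on $Q$, which neither the lemma's statement nor the paper's own proof makes explicit. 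So: same route and correct algebra, but the step you identify as the main obstacle genuinely needs the deformation-pair (Minkowski summand) hypothesis and is not supplied in your proposal.
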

\begin{proof}
The first claim follows immediately from the definition. We now prove (2): after canceling  the common terms
$$\sum_{i=1}^r(\eta_{Q}(k_ic_i)+\eta_Q(a_ic_i))$$  
on both sides, we need to prove that  
$$
-\eta_{Q} \left(\sum_{i=1}^r (a_i + k_i) c_i \right) =
-\eta_Q \left(\sum_{i=1}^r k_i c_i \right) 
+ \sum_{i=1}^r \eta_Q(b_i c_i) 
- \eta_Q \left(\sum_{i=1}^r (a_i + b_i) c_i \right).
$$

This holds because, by the definition of $\partial(\bfk)$, we see that if $b_i \neq 0$, then  
$$
v(c_i) = v\left(\sum_{i=1}^r k_i c_i\right).
$$
Consequently, we have  
$$
\sum_{i=1}^r \eta_Q(b_i c_i) = \eta_Q\left(\sum_{i=1}^r b_i c_i\right) \quad \text{and}\quad \sum_{i=1}^r k_i c_i = \sum_{i=1}^r b_i c_i.
$$
From this, the claim follows.
\end{proof}

\subsection{Formal deformations of a pair $\paar$}
Let $R$ be a local Artinian $\mathbb{C}$-algebra with residue field $R/m_R=\mathbb{C}$.
A \emph{deformation of a pair} $\paar$ over $R$ is a \emph{deformation of the closed embedding} $\partial X\hookrightarrow X$ over $R$, which is given by a compatible system of commutative diagrams:

\begin{equation}\label{eq:def_xi_n}
\xi_n\colon
\begin{tikzcd}
\cY_n \arrow[dr, "g_n"'] \arrow[r, hook] & \cX_n \arrow[d, "f_n"] \\
& \spec (R/m_R^n)
\end{tikzcd}
\end{equation}
for each $n\in \mathbb{N}$, such that:
\begin{itemize}
    \item $f_n$ and $g_n$ are flat and $\cY_n\hookrightarrow \cX_n$ is a closed embedding,
    \item $\xi_0$ is the closed embedding $\partial X\hookrightarrow X$ over $\operatorname{Spec} \mathbb{C}$,
    \item for all $n\geq 1$, $\xi_n$ induces $\xi_{n-1}$ by pullback under the natural inclusion $\operatorname{Spec} (R/m_R^{n-1}) \to \operatorname{Spec} (R/m_R^n)$.
\end{itemize}

It is straightforward to define a deformation functor $F_{\paar}$, which associates to $R$ the set of isomorphism classes of deformations of $\paar$ over $R$. The corresponding tangent space is denoted by $T^1_{\paar}$. 
If we disregard $\cY_n$ and consider only the maps $\cX_n \to \operatorname{Spec} (R/m^n_R)$ satisfying the above properties, we obtain the deformation functor $F_X$, which assigns to $R$ the set of isomorphism classes of deformations of $X$. The corresponding tangent space is denoted by $T^1_X$.

The following lemma provides the equational flatness criterion for Gorenstein affine toric varieties.
\begin{lemma}[Equational Flatness Criterion]\label{lem equat fla}
    Let $X_P = \spec \CC[\bfx,u]/(f_\bfk~|~\bfk\in \NN^r)$ be a Gorenstein affine toric variety, and let $R = \CC[[\bft]]/I=\CC[[t_1,\dots,t_n]]/I$, where $I$ is an ideal. Recall $r_{\bfa,\bfk}(\bfx,u)$ from Proposition \ref{acf1}.
    If there exist $$F_\bfk(\bfx,u,\bft) \in \CC[\bfx,u][[\bft]]$$ such that:
    \begin{itemize}
        \item $F_\bfk(\bfx,u,0) = f_\bfk(\bfx,u)$ for all $\bfk\in \NN^r$.
        \item There exist linear relations $R_{\bfa,\bfk}(\bfx,u,\bft)$ among $F_\bfk$ in $\CC[\bfx,u][[\bft]]/I$ such that
$$
R_{\bfa,\bfk}(\bfx,u,0) = r_{\bfa,\bfk}(\bfx,u) \quad \text{for all } \bfa, \bfk \in \mathbb{N}^r,
$$
    \end{itemize}
then there exists a formal deformation of $(X_P,\partial X_P)$ over $R = \CC[[\bft]]/I$ with 
$$
\cX_n = \spec \CC[\bfx,u][[\bft]] / \left(m_R^n + (F_\bfk(\bfx,u,\bft) \mid \bfk \in \NN^r) \right)
$$
and 
$$
\cY_n = \spec \CC[\bfx,u][[\bft]] / \left(m_R^n + (u, F_\bfk(\bfx,u,\bft) \mid \bfk \in \NN^r) \right).
$$
\end{lemma}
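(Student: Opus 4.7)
The plan is to verify, for each $n$, that $\cX_n\to\Spec(R/m_R^n)$ and $\cY_n\to\Spec(R/m_R^n)$ are flat, that $\cY_n\hookrightarrow\cX_n$ is a closed embedding, and that the system $(\xi_n)$ is compatible under the natural projections $R/m_R^n\to R/m_R^{n-1}$. The principal tool is the equational criterion of flatness: an $R$-algebra quotient $B/J$ is flat over a local ring $R$ if and only if every syzygy of a chosen set of generators of $J$ modulo $m_R$ lifts to a syzygy of those generators in $B$.

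First I would handle the flatness of $\cX_n$. Since $R/m_R^n$ is Artinian, the formal completion in $\bft$ flattens out: $\CC[\bfx,u][[\bft]]/m_R^n\cong(R/m_R^n)[\bfx,u]$. By Proposition \ref{acf1}, the module of $\CC[\bfx,u]$-linear syzygies among $\{f_\bfk\}$ is generated by the $r_{\bfa,\bfk}$, and the hypothesis supplies relations $R_{\bfa,\bfk}\in\CC[\bfx,u][[\bft]]/I$ among the $F_\bfk$ whose reductions at $\bft=0$ are exactly the $r_{\bfa,\bfk}$. Reducing further modulo $m_R^n$ furnishes the syzygy lifts demanded by the flatness criterion, so $\cX_n$ is flat over $R/m_R^n$.

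For $\cY_n$ I would use the local criterion of flatness. The variable $u$ corresponds to the Gorenstein degree $R^*\in S_P$, and translation by $R^*$ is injective on the monoid $S_P$, so $u$ is a nonzerodivisor on the central fibre $X_P=\Spec\CC[S_P]$. Combined with flatness of $\cX_n$ over $R/m_R^n$, the local criterion then yields that $u$ is a nonzerodivisor on $\cX_n$ and that $\cX_n/(u)=\cY_n$ is flat over $R/m_R^n$. The closed embedding $\cY_n\hookrightarrow\cX_n$ is visible from the presentations, and compatibility $\xi_n\mapsto\xi_{n-1}$ is automatic because reducing the $F_\bfk$ modulo $m_R^{n-1}$ recovers the equations defining $\xi_{n-1}$; the base case reproduces the closed embedding $\partial X_P\hookrightarrow X_P$ over $\Spec\CC$.

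The substantive content lies entirely in the first step; everything else is either a standard toric fact (the nonzerodivisor property of $u$) or formal bookkeeping. There is no genuine obstacle here: Proposition \ref{acf1} identifies exactly the syzygies that must be lifted, and the hypothesis provides those lifts directly, so the lemma is effectively a translation of the abstract flatness criterion into the explicit combinatorial language used in the rest of the paper.
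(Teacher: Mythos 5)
Your proposal is correct and follows essentially the same route as the paper, which simply invokes the equational flatness criterion (citing Eisenbud, Cor.~6.5, and de Jong--Pfister, \S10.3) together with Proposition \ref{acf1}; your first step is precisely that argument spelled out. The extra detail you supply for $\cY_n$ (the local criterion plus the fact that $u$ is a nonzerodivisor on the integral domain $\CC[S_P]$) is a valid and cleaner way to handle the boundary than recomputing syzygies of the larger ideal $(u,f_\bfk)$, and it fills in what the paper leaves implicit.
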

\begin{proof}
  It follows immediately from Proposition \ref{acf1}, see, e.g., \cite[Corollary 6.5]{Eis95} or \cite[Section 10.3]{JP00} for the general form of the equational flatness criterion.
\end{proof}

\begin{definition}\label{def 5757}
    We say that we have formal deformation $\{F_\bfk(\bfx,u,\bft)\mid \bfk\in \NN^r\}$ of $(X_P,\partial X_P)$ over $\CC[[\bft]]/I$ if there exists linear relations $R_{\bfa,\bfk}(\bfx,u,\bft)$ among $F_\bfk$ in $\CC[\bfx,u][[\bft]]/I$ such that the conditions of Lemma \ref{lem equat fla} are satisfied.
\end{definition}

\subsection{Deformation of equations for one-parameter deformations}\label{subsec pert}

For a given deformation pair $(m,Q)$ of $P$, we are going to explicitly describe a one-parameter deformation of $X_P$ in terms of the deforming equations $f_{\bfk}$. This is the first key result of the paper, and it plays a crucial role in proving Theorem \ref{thmm}.
 Let $t=t_{(m,Q)}$ be a deformation parameter of lattice degree $\deg(t)=m$. We define

\begin{multline}\label{eq Fk}
F_{\bfk}(\bfx,u,t):=\bfx^\bfk-\sum_{i=0}^{\eta_Q(\bfk)}{\eta_Q(\bfk)\choose i}t^i \chi_P^{s_\bfk-im}=\\
=f_\bfk-\sum_{i=1}^{\eta_Q(\bfk)}{\eta_Q(\bfk)\choose i}t^i \bfx^{\partial_P(s_\bfk-im)}u^{n_P(s_\bfk-im)},
\end{multline}
where we used the notation from \eqref{eq chis 1} and $t=t_{(m,Q)}$ is a deformation parameter that has $\tM$-degree $\deg(t)=m$. 

Note that $\chi^{s_\bfk-im}=\chi_P^{s_\bfk-im}$ is well-defined for all $i=1,\dots,\eta_Q(\bfk)$, meaning that $s_\bfk - im \in \sigma^\vee$. To prove this, it suffices to show that $s_\bfk - im$ takes nonnegative values on the generators of $\sigma$, which lie in $\sigma \cap (R^* = 1)$.  
To do so, it is enough to show that $\eta_{P_j}(\bfk) \geq j\cdot \eta_Q(\bfk)$ for all $j \in \mathbb{N}$ such that $P_j := P \cap (\varphi_m = j)$ is nonempty.
This follows from Lemma \ref{lem bfk} (1), since $jQ$ is a Minkowski summand of $P_j$ and thus $\eta_{P_j}(\bfk) \geq \eta_{jQ}(\bfk)=j\eta_Q(\bfk)$.

Let us choose $(k_{1j},...,k_{rj})\in \NN^r$, $j=1,...,\eta_Q(\bfk)$ such that 
$$\sum_{l=1}^r k_{lj} s_l = \partial(s_\bfk - jm)$$
and let $\bfk_j:=(k_{1j},...,k_{rj})\in \NN^r$. We define
\begin{equation}\label{eq r one par}
R_{\bfa,\bfk}(\bfx,u,t):=F_{\bfa+\bfk}-\bfx^\bfa F_{\bfk}-u^{\eta_P(\bfk)}F_{\partial(\bfk)+\bfa}-\sum_{j=1}^{\eta_Q(\bfk)}u^{\eta_P(\bfk_j)}{\eta_Q(\bfk) \choose j}t^jF_{\bfk_j+\bfa}.
\end{equation}
We see that $F_\bfk(\bfx,u,0)=f_\bfk(\bfx,u)$ and $R_{\bfa,\bfk}(\bfx,u,0)=r_{\bfa,\bfk}(\bfx,u)$. 

\begin{remark}
As mentioned in Remark~\ref{rem par remm}, we can also write $F_{\partial(s_\bfk - jm) + \bfa}$ in place of $F_{\bfk_j + \bfa}$, where $\partial(s_\bfk - jm) = \bfk_j \in \mathbb{N}^r$ (and not $\partial(s_\bfk - jm) \in \partial(S_P)$).
\end{remark}

\begin{proposition}\label{pro main pro}
$R_{\bfa,\bfk}$ is a linear relation (among $F_{\bfk}$, for all $\bfk\in \NN^r$). 
\end{proposition}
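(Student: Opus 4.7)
The plan is to verify the polynomial identity $R_{\bfa,\bfk}(\bfx,u,t)=0$ in $\CC[\bfx,u,t]$ directly by substituting the expansion \eqref{eq Fk} for each $F_\bfl$ into \eqref{eq r one par} and matching the coefficient of $t^n$ for each $n\geq 0$.

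A preliminary observation is that $\eta_P(\bfk_j)=0$: since $\sum_{l=1}^{r} k_{lj}s_l=\partial_P(s_\bfk-jm)$ lies in $\partial(S_P)$, comparing $R^*$-coordinates on both sides forces $\sum_l k_{lj}\eta_P(c_l)=\eta_P(\sum_l k_{lj}c_l)$, so every factor $u^{\eta_P(\bfk_j)}$ in \eqref{eq r one par} equals $1$. Setting $t=0$ then reduces $R_{\bfa,\bfk}$ to the relation $r_{\bfa,\bfk}=f_{\bfa+\bfk}-\bfx^\bfa f_\bfk-u^{\eta_P(\bfk)}f_{\partial(\bfk)+\bfa}$ of Proposition \ref{acf1}, whose vanishing follows from Lemma \ref{lem bfk}(2) with $Q=P$, i.e.\ $\eta_P(\bfa+\bfk)=\eta_P(\bfk)+\eta_P(\partial(\bfk)+\bfa)$.

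For the coefficient of $t^n$ with $n\geq 1$, I would combine two algebraic tools. The first is Lemma \ref{lem bfk}(2) applied to $Q$, giving $\eta_Q(\bfa+\bfk)=\eta_Q(\bfk)+\eta_Q(\partial(\bfk)+\bfa)$, together with Vandermonde's identity
$$\binom{\eta_Q(\bfa+\bfk)}{n}=\sum_{j=0}^{n}\binom{\eta_Q(\bfk)}{j}\binom{\eta_Q(\partial(\bfk)+\bfa)}{n-j}.$$
The second consists of the semigroup-level identities $s_{\partial(\bfk)+\bfa}=s_{\bfa+\bfk}-\eta_P(\bfk)R^*$ and $s_{\bfk_j+\bfa}+jm=s_{\bfa+\bfk}-n_P(s_\bfk-jm)R^*$, which via $\chi_P^{s+kR^*}=u^k\chi_P^s$ translate to the polynomial identities $u^{\eta_P(\bfk)}\chi_P^{s_{\partial(\bfk)+\bfa}-nm}=\chi_P^{s_{\bfa+\bfk}-nm}$ and $u^{n_P(s_\bfk-jm)}\chi_P^{s_{\bfk_j+\bfa}-(n-j)m}=\chi_P^{s_{\bfa+\bfk}-nm}$. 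Rewriting every $\chi_P$-term in the $t^n$-coefficient in the common form $\chi_P^{s_{\bfa+\bfk}-nm}$ (times a suitable power of $u$) and invoking the Vandermonde splitting then causes the scalar coefficients to cancel termwise.

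The main obstacle is the raw monomial terms $\bfx^{\bfk_j+\bfa}$ that arise from the $t^0$-part of $F_{\bfk_j+\bfa}$ when $n-j=0$: these do not admit a direct $\chi_P$-interpretation but must be matched against $\bfx^\bfa\chi_P^{s_\bfk-nm}=\bfx^{\bfa+\bfk_n}u^{n_P(s_\bfk-nm)}$ (from $\bfx^\bfa F_\bfk$) and $\chi_P^{s_{\bfk_n+\bfa}}$ (from $F_{\bfk_n+\bfa}$). Handling this requires fixing, once and for all, a consistent monomial representative $\chi_P^s=\bfx^{\partial_P(s)}u^{n_P(s)}$ for every $s\in S_P$ (through an explicit choice of expressing $\partial_P(s)$ as $\sum_l b_l s_l$). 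The full sum $\sum_{j=1}^{\eta_Q(\bfk)}$ in \eqref{eq r one par} exists precisely to supply these correction terms, and with these conventions fixed the identity reduces to the Vandermonde cancellation, completing the verification.
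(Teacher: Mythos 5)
Your overall strategy is the same as the paper's: extract the coefficient of $t^n$, rewrite every $\chi_P$-term in the common form $\chi_P^{s_{\bfa+\bfk}-nm}$ via the semigroup identities $s_{\partial(\bfk)+\bfa}=s_{\bfa+\bfk}-\eta_P(\bfk)R^*$ and $s_{\bfk_j+\bfa}+jm=s_{\bfa+\bfk}-n_P(s_\bfk-jm)R^*$, and conclude with the Vandermonde identity coming from Lemma \ref{lem bfk}(2) (together with $\eta_Q(\bfk_j+\bfa)=\eta_Q(\partial(\bfk)+\bfa)$, which you use implicitly and should state). All of these ingredients are correctly identified.

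The gap sits exactly at the point you call ``the main obstacle,'' and it is not repaired by ``fixing a consistent monomial representative.'' Your preliminary observation that $\eta_P(\bfk_j)=0$ is correct: any expression of an element of $\partial(S_P)$ as $\sum_l k_{lj}s_l$ forces $\sum_l k_{lj}\eta_P(c_l)=\eta_P\bigl(\sum_l k_{lj}c_l\bigr)$. But if you then take the prefactor of $F_{\bfk_j+\bfa}$ in \eqref{eq r one par} to be $u^0=1$, the raw monomial $-\binom{\eta_Q(\bfk)}{n}\bfx^{\bfk_n+\bfa}$ coming from the $t^0$-part of $F_{\bfk_n+\bfa}$ cannot cancel $+\binom{\eta_Q(\bfk)}{n}\bfx^\bfa\chi_P^{s_\bfk-nm}=+\binom{\eta_Q(\bfk)}{n}\bfx^{\bfk_n+\bfa}u^{n_P(s_\bfk-nm)}$ unless $n_P(s_\bfk-nm)=0$, and likewise $\chi_P^{s_{\bfk_n+\bfa}}$ differs from the needed $\chi_P^{s_{\bfa+\bfk}-nm}$ by the same power of $u$; no choice of representative for $\partial_P(s)$ can change an exponent of $u$. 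The resolution is that the exponent in \eqref{eq r one par} must be read as $n_P(s_\bfk-jm)$: this is how the paper's own proof uses it (it asserts $\bfx^\bfa\chi^{s_\bfk-lm}=u^{\eta_P(\bfk_l)}\bfx^{\bfk_l+\bfa}$, which only holds with $u^{\eta_P(\bfk_l)}=u^{n_P(s_\bfk-lm)}$), and it is how the analogous coefficient is written explicitly in Construction \ref{cons 1}. Once you place $u^{n_P(s_\bfk-jm)}$ in front of $F_{\bfk_j+\bfa}$, your own identity $u^{n_P(s_\bfk-jm)}\chi_P^{s_{\bfk_j+\bfa}-(n-j)m}=\chi_P^{s_{\bfa+\bfk}-nm}$ makes every surviving term proportional to $\chi_P^{s_{\bfa+\bfk}-nm}$, the raw monomials cancel, and the Vandermonde splitting finishes the argument exactly as in the paper. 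As written, however, your proof asserts two incompatible things about that prefactor and does not close.
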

\begin{proof}
To verify that $R_{\bfa,\bfk}$ is a linear relation, we compute that the term in front of $t^l$ is zero for all $l\in \NN$. This term is equal to $$-{\eta_Q(\bfa+\bfk)\choose l}\chi^{s_{\bfa+\bfk}-lm}+\bfx^\bfa{\eta_Q(\bfk)\choose l}\chi^{s_{\bfk}-lm}+u^{\eta_P(\bfk)}{\eta_Q(\partial(\bfk)+\bfa)\choose l}\chi^{s_{\partial(\bfk)+\bfa}-lm}-$$
$$-u^{\eta_P(\bfk_l)}{\eta_Q(\bfk)\choose l}(\bfx^{\bfk_l+\bfa}-\chi^{s_{\bfk_l+\bfa}})+\sum_{j=1}^{l-1}u^{\eta_P(\bfk_j)}{\eta_Q(\bfk) \choose j}{\eta_Q(\bfk_j+\bfa)\choose l-j}\chi^{s_{\bfk_j+\bfa}-(l-j)m}.$$

Note that by definition we have 
 $\bfx^\bfa\chi^{s_\bfk-lm}=u^{\eta_P(\bfk_l)}\bfx^{\bfk_l+\bfa}$ and $$\chi^{s_{\bfa+\bfk}-lm}=u^{\eta_P(\bfk)}\chi^{s_{\partial(\bfk)+\bfa}-lm}=u^{\eta_P(\bfk_j)}\chi^{s_{\bfk_j+\bfa}-(l-j)m},$$ for all $j=1,...,l$. Thus we see that the term before $t^l$ is zero since $\eta_Q(\bfk_j+\bfa)=\eta_Q(\partial(\bfk)+\bfa)$ for every $\bfk_j$ and $${\eta_Q(\bfa+\bfk)\choose l}={\eta_Q(\bfk)\choose l}+{\eta_Q(\partial(\bfk)+\bfa)\choose l}+\sum_{j=1}^{l-1}{\eta_Q(\bfk)\choose j}{\eta_Q(\partial(\bfk)+\bfa)\choose l-j},$$
 which holds because $$(1+t)^{\eta_Q(\bfa+\bfk)}=(1+t)^{\eta_Q(\bfk)}(1+t)^{\eta_Q(\partial(\bfk)+\bfa)}$$ by Lemma \ref{lem bfk}.
\end{proof}

By the equational flatness criterion we get the following corollary.

\begin{corollary}\label{cor 3.3}
The map
$$\spec \CC[\bfx,u,t]/(F_\bfk(\bfx,u,t)~|~\bfk\in \NN^r)\to \spec \CC[[t]]$$
is flat and has fibre over $0$ equal to $X_P$. By Lemma \ref{lem equat fla} we also get a formal (one-parameter) deformation of $(X_P,\partial X_P)$ over $\CC[[t]]$, corresponding to a deformation pair $(m,Q)$ of $P$. 
\end{corollary}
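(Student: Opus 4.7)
The plan is to invoke the Equational Flatness Criterion, Lemma~\ref{lem equat fla}, with $R = \CC[[t]]$ and $I = 0$, treating $t = t_{(m,Q)}$ as the single deformation parameter of $\tM$-degree $m$. Two hypotheses must be checked: that $F_\bfk(\bfx,u,0) = f_\bfk(\bfx,u)$ for every $\bfk \in \NN^r$, and that the defining relations $r_{\bfa,\bfk}$ of Proposition~\ref{acf1} lift to $\CC[\bfx,u][[t]]$-linear relations among the $F_\bfk$. Both items are essentially already prepared in the preceding discussion, so this is primarily an assembly argument.

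First I would verify the initial condition. From the explicit formula~\eqref{eq Fk}, setting $t = 0$ annihilates every summand with $i \geq 1$, leaving only $\bfx^\bfk - \chi_P^{s_\bfk} = \bfx^\bfk - \bfx^{\partial_P(\bfk)} u^{\eta_P(\bfk)} = f_\bfk(\bfx,u)$ by \eqref{eq fbfk xu2} and \eqref{eq chis 1}. (Recall that the well-definedness of the intermediate monomials $\chi_P^{s_\bfk - im}$ for $1 \le i \le \eta_Q(\bfk)$ was already secured just after \eqref{eq Fk} using Lemma~\ref{lem bfk}(1) and the deformation-pair hypothesis.)

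Second, for the lifting of relations, I would appeal directly to Proposition~\ref{pro main pro}, whose content is precisely that the expressions $R_{\bfa,\bfk}(\bfx,u,t)$ defined in~\eqref{eq r one par} are $\CC[\bfx,u][[t]]$-linear relations among the $F_\bfk$. A direct substitution $t = 0$ in~\eqref{eq r one par} kills the entire sum $\sum_{j=1}^{\eta_Q(\bfk)}(\cdots)t^j$, and the three surviving terms evaluate, by the first step, to $f_{\bfa+\bfk} - \bfx^\bfa f_\bfk - u^{\eta_P(\bfk)} f_{\partial(\bfk)+\bfa} = r_{\bfa,\bfk}(\bfx,u)$, exactly the generator of linear relations recorded in Proposition~\ref{acf1}. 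So the standard relations are lifted as required by the criterion.

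Applying Lemma~\ref{lem equat fla} now produces the formal deformation of the pair $(X_P, \partial X_P)$ over $\CC[[t]]$, with total space cut out by the $F_\bfk$ and boundary additionally cut out by $u = 0$, and thus in particular the claimed flat map $\spec \CC[\bfx,u,t]/(F_\bfk \mid \bfk \in \NN^r) \to \spec \CC[[t]]$. Restricting the flat family to the central fibre $t = 0$ recovers $\spec \CC[\bfx,u]/(f_\bfk \mid \bfk \in \NN^r) = X_P$ by Proposition~\ref{acf1}. The only nontrivial step, the verification that $R_{\bfa,\bfk}$ is indeed a relation, is already settled by Proposition~\ref{pro main pro} via the binomial identity $(1+t)^{\eta_Q(\bfa+\bfk)} = (1+t)^{\eta_Q(\bfk)}(1+t)^{\eta_Q(\partial(\bfk)+\bfa)}$, so no further obstacle remains.
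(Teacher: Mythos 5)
Your proposal is correct and follows exactly the paper's route: the paper likewise deduces the corollary by combining the observation that $F_\bfk(\bfx,u,0)=f_\bfk(\bfx,u)$ and $R_{\bfa,\bfk}(\bfx,u,0)=r_{\bfa,\bfk}(\bfx,u)$ with Proposition~\ref{pro main pro} and the equational flatness criterion of Lemma~\ref{lem equat fla}. No gaps.
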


\begin{example}\label{ex 3.6}
    Here we continue with Example \ref{ex3} with $$P=\conv\{(0,0),(3,0),(0,2)\}$$ and let $(-m,Q)$ be a deformation pair of $P$ with $m=(0,2,-3)$ and $Q=\conv\{(0,0),(1,0)\}$. We use $(-m,Q)$ to be consistent with notation in Example \ref{ex 2}.
    We have $$F_\bfk=f_\bfk-tx_3,~~~F_{\bfa+\bfk}=f_{\bfa+\bfk}-t^2x_3u-2tx_5u^2,~~~F_{\partial(\bfk)+\bfa}=f_{\partial(\bfk)+\bfa}-tx_5u
    $$
    and thus 
    $$R_{\bfa,\bfk}=F_{\bfa+\bfk}-\bfx^\bfa F_{\bfk}-uF_{\partial(\bfk)+\bfa}-tF_{\bfk_1+\bfa},$$
    where $t=t_{(-m,Q)}$ and $\bfk_1=(0,0,1,0,0,0)$ and thus $F_{\bfk_1+\bfa}=x_3x_4x_6-u^2x_5-tx_3u$. Note that $R_{\bfa,\bfk}$ is indeed a linear relation, since the term in front of $t^2$ is $-t^2x_3u+t(tx_3u)=0$ and the term in front of $t$ is $-2tx_5u^2+\bfx^\bfa tx_3+utx_5u-t(x_3x_4x_6-u^2x_5)=0$, since $\bfx^\bfa=x_4x_6$.
\end{example}

\section{Mutations of Laurent polynomials in two variables}\label{sec 4}
This section is devoted to the study of mutations of Laurent polynomials in two variables, which play a key role in constructing the deformation families developed in the subsequent sections.

\begin{lemma}\label{lem pocor}
Let $A$ be an integral domain. 
If the polynomial 
$$
a_0+\cdots+a_nx^n\in A[x]
$$ 
is divisible by $p^m$, with $p\in A[x]$ and $m\in \NN$, then the polynomial 
$$
\sum_{r=0}^n {z_1+z_2r\choose k} a_r x^r\in A[x]
$$
is divisible by $p^{m-k}$ for any $z_1,z_2\in \ZZ$ and any $k\in \NN$, provided that $k\leq m$.  
\end{lemma}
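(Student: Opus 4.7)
The plan is to package the family of polynomials $\sum_r\binom{z_1+z_2r}{k}a_rx^r$, as $k$ varies, into the coefficients of a single two-variable generating series, and then transfer the divisibility of $f$ by $p^m$ to each coefficient separately. Specifically, I would introduce
$$G(x,t):=(1+t)^{z_1}f\bigl(x(1+t)^{z_2}\bigr)\in A[x][[t]],$$
where $(1+t)^{z_i}$ is read as its formal power-series expansion when $z_i<0$. Using the binomial identity $(1+t)^{z_1+z_2r}=\sum_k\binom{z_1+z_2r}{k}t^k$ and interchanging the two finite-then-formal summations gives
$$G(x,t)=\sum_{k\geq 0}\left(\sum_{r=0}^n\binom{z_1+z_2r}{k}a_rx^r\right)t^k,$$
so the polynomial I care about is exactly $[t^k]G(x,t)$.

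Next I would exploit the hypothesis. Writing $f=p^m h$ with $h\in A[x]$, substitution yields
$$G(x,t)=(1+t)^{z_1}h\bigl(x(1+t)^{z_2}\bigr)\cdot p\bigl(x(1+t)^{z_2}\bigr)^m.$$
The key observation is that $p\bigl(x(1+t)^{z_2}\bigr)\equiv p(x)\pmod{t}$ in $A[x][[t]]$, so $p\bigl(x(1+t)^{z_2}\bigr)=p(x)+t\,q(x,t)$ for some $q\in A[x][[t]]$. Expanding by the binomial theorem,
$$p\bigl(x(1+t)^{z_2}\bigr)^m=\sum_{j=0}^{m}\binom{m}{j}p(x)^{m-j}\,t^j\,q(x,t)^j.$$
Plugging this back into $G(x,t)$ and reading off the coefficient of $t^k$: for $j>k$ the factor $t^j q(x,t)^j$ contributes nothing to $[t^k]$, while for $0\leq j\leq k$ the contribution is $\binom{m}{j}p(x)^{m-j}$ times the coefficient of $t^{k-j}$ in $(1+t)^{z_1}h\bigl(x(1+t)^{z_2}\bigr)q(x,t)^j$, which is an element of $A[x]$. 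Every surviving summand carries a factor $p(x)^{m-j}$ with $m-j\geq m-k$, so $[t^k]G(x,t)$ is divisible by $p^{m-k}$, as required.

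The only point that needs a little care, and which I view as the only potential pitfall rather than a real obstacle, is making sure $G(x,t)$ genuinely belongs to $A[x][[t]]$ when $z_1$ or $z_2$ is negative, so that extracting the coefficient of $t^k$ makes sense in $A[x]$. This is immediate because $(1+t)^{-1}=\sum_{i\geq 0}(-t)^i$ has coefficients in $\mathbb{Z}\subset A$, and $h$ and $p$ are polynomials, so all formal substitutions stay in $A[x][[t]]$. After this sanity check, the argument is purely algebraic and does not require $A$ to contain $1/k!$ or any other denominators, matching the generality of the statement.
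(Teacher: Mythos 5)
Your proof is correct, but it takes a genuinely different route from the paper's. The paper first reduces to $z_1=0$ by Pascal's identity and induction, then sets $g(x)=f(x^{z_2})$, observes that the $k$-th derivative $g^{(k)}$ is divisible by $p^{m-k}(x^{z_2})$, identifies $\tfrac{x^k}{k!}g^{(k)}(x)$ with $\sum_r\binom{z_2r}{k}a_rx^{rz_2}$, and changes variables $y=x^{z_2}$ back. You instead encode the whole family of sums at once as the $t$-coefficients of $(1+t)^{z_1}f\bigl(x(1+t)^{z_2}\bigr)\in A[x][[t]]$ and read off divisibility from the binomial expansion of $\bigl(p(x)+tq(x,t)\bigr)^m$. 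What your version buys: it treats $z_1$ and $z_2$ of either sign uniformly (no separate Pascal induction, no Laurent monomials $x^{z_2}$ with $z_2<0$), and it never divides by $k!$ --- a point the paper's divided-derivative step leaves implicit and which, for an arbitrary integral domain with no characteristic assumption, is exactly the kind of integrality issue one would otherwise have to justify via Hasse derivatives. What the paper's version buys is brevity and a more classical flavour; both arguments are purely formal manipulations with integer binomial coefficients, so they are equally general once the details are filled in. Your generating-function identity $[t^k]G=\sum_r\binom{z_1+z_2r}{k}a_rx^r$ and the bound $m-j\ge m-k$ on the surviving terms are both verified correctly, so there is no gap.
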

\begin{proof}
    By induction and Pascal's identity, it suffices to prove the claim for $z_1=0$. Define
$$
f(x):=a_0+\cdots+a_nx^n \quad \text{and} \quad g(x):=f(x^{z_2}).
$$
Since $f(x)$ is divisible by $p^{m}(x)$, the $k$-th derivative $g^{(k)}(x)$ is divisible by $p^{m-k}(x)$. Consequently, 
$$
\frac{x^k}{k!}g^{(k)}(x)=\sum_{r=0}^n{z_2r\choose k}a_rx^{rz_2}.
$$
is divisible by $p^{m-k}(x^{z_2})$ in $A[x]$. The change of variable $y = x^{z_2}$  shows that
$$
\sum_{r=0}^n {z_2r\choose k} a_r y^r\in A[y]
$$
is divisible by $p^{m-k}(y)$. This completes the proof.
\end{proof}

\begin{definition}\label{def m mut}
Let  $m \in \tM\cong \ZZ^3$, $m\neq kR^*$ for any $k\in \ZZ$. A Laurent polynomial $f \in \CC[N]\cong \CC[\ZZ^2]$ is called \emph{$m$-mutable} if it is $(m,g)$-mutable with 
$\newt(g)$
a line segment of lattice length $1$.  
\end{definition}

\begin{remark}
We assume that all Laurent polynomials are normalized; in particular, $g$ is assumed to take the value $1$ at both vertices of $\newt(g)$.
\end{remark}

\begin{definition}\label{def prva not def}
Let $m\in \tM\cong \ZZ^2$. If $g$ is a Laurent polynomial with $\newt(g)\subset (\pi_M(m)=0)$
a line segment of lattice length $1$, we define the map
$$
\psi_{(m,g)}: \tM \to \tM
$$
by
\begin{equation}\label{eq psimr}
\psi_{(m,g)}(r) := 
\begin{cases}
r+\left(\eta_{\newt(g)}(\pi_M(-r))\right)m & \text{if } \pi_M(r) \text{ and } \pi_M(m) \text{ are not collinear in } M \cong \ZZ^2, \\
r-m, & \text{otherwise}.
\end{cases}
\end{equation}
\end{definition}

\begin{proposition}\label{prop mutability}
Let $f$ be $m$-mutable. Then $f$ is $r$-mutable if and only if $\mut^g_m f$ is $\psi_{(m,g)}(r)$-mutable. 
\end{proposition}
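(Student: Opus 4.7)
My plan is to reduce the biconditional to a single implication via an involution argument, and then carry out a combinatorial computation for the forward direction.

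The involution argument rests on two ingredients. First, $\mut^g_{-m}(\mut^g_m f) = f$: writing $\mut^g_m f = \sum_i f_i / g^i$, at the level $\varphi_{-m} = k > 0$ (i.e.\ $\varphi_m = -k$) the slice equals $f_{-k} g^k$ and is automatically divisible by $g^k$, so $\mut^g_m f$ is $(-m, g)$-mutable, and the mutation in the $-m$ direction recovers $\sum_i f_i = f$. Second, $\psi_{(-m,g)} \circ \psi_{(m,g)} = \mathrm{id}$, which is a direct computation: in Case 2 I use the fact that $\eta_{\newt(g)}(\pi_M(-r))$ depends only on the class of $r$ modulo $\ZZ m$ (since $\newt(g) \subset (\pi_M(m)=0)$, so $\langle v, \pi_M(m)\rangle = 0$ for all $v \in \newt(g)$), and in Case 1 the two shifts by $\pm m$ simply cancel. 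With these facts, if $\mut^g_m f$ is $\psi_{(m,g)}(r)$-mutable then applying the forward direction to $\mut^g_m f$ with the mutation $\mut^g_{-m}$ shows that $f$ is $r$-mutable.

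For the forward direction, I let $f = \sum_j h'_j (g')^j$ be the $(r, g')$-mutable decomposition. The central identity is
\[
\varphi_{r'} = \varphi_r + \eta\,\varphi_m \qquad (\text{Case 2}),\qquad \varphi_{r-m} = \varphi_r - \varphi_m \qquad (\text{Case 1}),
\]
which lets me translate $\varphi_{r'}$-slice data of $\mut^g_m f$ into $(\varphi_r,\varphi_m)$-bislice data of $f$. I identify $g''$ case by case: in Case 1, $\ker \pi_M(r)$, $\ker \pi_M(m)$, and $\ker \pi_M(r-m)$ coincide, so $g''$ is supported on a line segment in the same direction as $g'$, and I take $g'' = g'$; in Case 2, the primitive direction $w''$ of $\ker \pi_M(r')$ is an explicit $\ZZ$-combination of $w := $ direction of $\newt(g)$ and $w' := $ direction of $\newt(g')$, with coefficients determined by $\eta$ and $\mu := \langle w', \pi_M(m)\rangle$, and I take $g'' = \chi^{u''_1} + \chi^{u''_1 + w''}$ with $u''_1$ chosen to match the geometric image of the mutation.

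It then remains to verify that for $k \geq 0$ the slice $(\mut^g_m f)\big|_{\varphi_{r'}=k}$ is divisible by $(g'')^k$. My plan is to work through the ring homomorphism $\phi:\chi^v \mapsto \chi^v g^{-\langle v,\pi_M(m)\rangle}$, which satisfies $\mut^g_m f = g^{-\pi_\ZZ(m)}\phi(f)$, so that $\phi(f) = \sum_j \phi(h'_j)\,\phi(g')^j$ with $\phi(g') = \chi^{u'_1}g^{-\beta}(g^\mu + \chi^{w'})$ for $\beta := \langle u'_2, \pi_M(m)\rangle$. After reorganizing this sum by $\varphi_{r'}$-levels and expanding $(g^\mu+\chi^{w'})^j$ binomially, the required divisibility by $(g'')^k$ reduces to a statement of the form ``if $\sum_r a_r x^r$ is divisible by $p^k$, then $\sum_r \binom{z_1+z_2 r}{l} a_r x^r$ is divisible by $p^{k-l}$'', which is precisely Lemma \ref{lem pocor} applied with $p = g$ (or a monomial shift thereof). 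The main obstacle, and the step that requires the most care, is matching the parameters $(z_1, z_2, l, p)$ of Lemma \ref{lem pocor} to the combinatorial data of the two mutable decompositions of $f$; this is essentially a bookkeeping exercise, best organized via the cross-indexed bislice decomposition $f = \sum_{i,j} c_{i,j}\,\chi^{v(i,j)}$, where in Case 2 each bislice $(\varphi_m = i, \varphi_r = j)$ contains at most one lattice point $v(i,j)$.
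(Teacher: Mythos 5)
Your architecture coincides with the paper's: the converse is obtained from the forward implication via exactly the involution you describe (namely $\mut^{g}_{-m}\circ\mut^{g}_{m}=\mathrm{id}$ and $\psi_{(-m,g)}\circ\psi_{(m,g)}=\mathrm{id}$, both justified as you say), the collinear case is dispatched by inspection, and the engine of the forward direction is Lemma \ref{lem pocor}. Where you diverge is in how the non-collinear forward direction is organized, and that is where your plan has a soft spot. The paper first replaces $\newt(g)$ by a translate $\newt(\tilde g)$ with $\eta_{\newt(\tilde g)}(\pi_M(-r))=0$, so that $\psi_{(m,\tilde g)}(r)=r$; in that normalized situation the target slices of $\mut^{\tilde g}_{m}f$ lie in the direction $\ker\pi_M(r)$, and Lemma \ref{lem pocor} is applied with $p=h$, the unit segment witnessing the $r$-mutability of $f$, so that the hypothesis (columns of $f$ divisible by powers of $h$) and the conclusion (slices of the mutation divisible by powers of $h$) concern the \emph{same} binomial — exactly the shape of the lemma. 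The general case is then recovered by the shear identifying $\mut^{\tilde g}_{m}f$ with $\mut^{g}_{m}f$ up to replacing $r$ by $\psi_{(m,g)}(r)$. You instead attack the general $r'=\psi_{(m,g)}(r)$ directly and must produce a witness $g''$ in the new direction $w''\in\ker\pi_M(r')$, which is an integer combination of $w$ and $w'$ and in general parallel to neither. Your stated reduction, Lemma \ref{lem pocor} ``applied with $p=g$'', does not match what is needed there: the lemma only transfers divisibility by powers of a fixed $p$ to divisibility by lower powers of the \emph{same} $p$, whereas your hypotheses give divisibility by powers of $g$ (along $\varphi_m$-rows) and of $g'$ (along $\varphi_r$-columns), and your target is divisibility by powers of $g''$ in a third direction; in Case 2 the direction of $g$ can never equal that of $g''$. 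This is repairable — the missing ingredient is precisely the coordinate change $v\mapsto v-\varphi_m(v)c$ that the paper effects by translating $\newt(g)$, which carries $w'$ to $w''$ and $h$-divisibility to $g''$-divisibility — but as written the final ``bookkeeping exercise'' conceals a genuine step rather than a routine one.
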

\begin{proof}
If $f$ is both $m$-mutable and $r$-mutable, it is $(m,g)$-mutable and $(r,h)$-mutable, where $$\newt(g)\subset (\pi_M(m)=0) \quad \text{and} \quad \newt(h)\subset (\pi_M(r)=0)$$ are line segments of lattice length $1$.  
If $\pi_M(r)$ and $\pi_M(m)$ are collinear in $M$, the claim follows immediately from the definition.

Assume that $\pi_M(r)$ and $\pi_M(m)$ are not collinear in $M$. Suppose first that $\eta_{\newt(g)}(\pi_M(-r))=0$ and thus $\psi_{(m,g)}(r)=r$.  
For any Laurent polynomial $q \in \CC[N]$, and for $i,j \in \ZZ$, we denote by $q_{i,j}$ the restriction of $q$ to the subset
$
\newt(q) \cap (\varphi_m = i) \cap (\varphi_r = j).
$
Let $n:=\max_{v \in \newt(f)} \varphi_r(v)$, and note that 
$$
\max_{v \in \newt(g)} \varphi_r(v) = \eta_{\newt(g)}(\pi_M(-r)) = 0.
$$  
Moreover, we denote $v_{i,j} \in N_\RR$ to be the element such that $\varphi_m(v_{i,j})=i$ and $\varphi_r(v_{i,j})=j$.

Thus, we have
\begin{equation}\label{eq mut1}
(\mut_m^g f)_{i,n-j}=\sum_{\substack{k+l=j}} f_{i,n-k} \left( g^{-i} \right)_{0,-l} = \sum_{\substack{k+l=j}} f_{i,k} \chi^{v_{0,-l}} {-i\choose p},
\end{equation}
where $l = z \cdot p$ with
\begin{equation}\label{eq zz}
 z = \max_{v\in \newt(g)} \varphi_r(v) - \min_{v\in \newt(g)} \varphi_r(v)=- \min_{v\in \newt(g)} \varphi_r(v), 
 \end{equation}
and $\chi^{v_{0,-l}}$ is set to $0$ if $v_{0,-l} \not\in N$, and otherwise it represents the variable corresponding to $v_{0,-l} \in N$.

If $f$ is $(r,h)$-mutable, with $\newt(h)$ a line segment of lattice length $1$, then $\sum_{i\in \ZZ}f_{i,n-k}$ is divisible by $h^{n-k}$, and by applying Lemma \ref{lem pocor}, it follows that 
$$\sum_{i\in \ZZ} f_{i,n-k}\chi^{v_{0,-l}}{-i\choose p}$$  
is divisible by $h^{n-k-p}$. Thus, equation \eqref{eq mut1} implies that $\sum_{i\in \ZZ}(\mut_m^gf)_{i,n-j}$ is divisible by $h^{n-j}$, since $p \leq l$ and we are summing over $k+l=j$.
This implies that $\mut_m^gf$ is $\psi_{(m,g)}(r)=r$-mutable, which is what we wanted to show.

If $\eta_{\newt(g)}(\pi_M(-r)) \neq 0$, we translate $\newt(g)$ to a polytope $\newt(\tilde{g}) \subset (\pi_M(m)=0)$ satisfying $$\eta_{\newt(\tilde{g})}(\pi_M(-r))=0.$$ The coefficient of $\mut_m^{\tilde{g}} f$ in front of $\chi^{v_{i,j}}$ equals the coefficient of $\mut_m^{g} f$ in front of $\chi^{w_{i,j}}$, where $w_{i,j}\in N_\RR$ satisfies 
$
\varphi_m(w_{i,j})=i$ and $\varphi_{\tilde{r}}(w_{i,j})=j$
with 
$
\tilde{r}=r+\eta_{\newt(g)}(\pi_M(-r))\,m.
$
Thus, we conclude that $\mut_m^g f$ is $\psi_{(m,g)}(r)$-mutable.

The converse follows immediately from the above proof, since
$$\mut_{-m}^g\left( \mut_m^gf \right)=f \quad \text{and}\quad \psi_{(-m,g)}(\psi_{(m,g)}(r))=r.$$
This completes the proof.
\end{proof}

\begin{example}
Let $f=1+3x+3x^2+x^3+2y+2xy+y^2$, $g=1+y$, $r=(0,-1,3)$, and $m=(-2,0,2)$. We see that $f$ is both $m$-mutable and $r$-mutable, and moreover, $\mut_m^gf$ is $\psi_{(m,g)}(r)=r$-mutable, since 
$$\mut_m^gf=(1+x)^3+(1+x)2y(1+2x)+3x^2y^2+6x^3y^2+4x^3y^3+x^3y^4.$$
$f$ is the first Laurent polynomial presented in Figure \ref{fig 3}, and $f_1:=\mut_m^gf$ is the second. 
Note that in this example $z$ from \eqref{eq zz} equals $1$. Let $h=1+x$ and $s=(0,-1,2)$. Then 
$$\mut_s^hf_1=1+x+2xy+4x^2y+3x^2y^2+6x^3y^2+4x^3y^3+4x^4y^3+x^3y^4+2x^4y^4+x^5y^4,$$
which is the third Laurent polynomial presented in Figure \ref{fig 3}. Note that $f_1$ is $s$-mutable and $-m=(2,0,-2)$-mutable. $\mut_s^hf_1$ is indeed $\psi_{(s,h)}(-m)=-m+\left(\eta_{\newt(h)}(\pi_M(m))\right)s=-m+2s=(2,-2,2)$-mutable, since $$\frac{1+2z+3z^2+4z^3+2z^4}{(1+z)^2}=1+2z^2,$$
where we set $z=xy$.
\begin{figure}[htb]
\begin{tikzpicture}[scale=0.8]
\draw[thick,  color=black]  
  (0,0) -- (3,0) -- (0,2) -- cycle;
\fill[thick,  color=black]
  (0,0) circle (2.5pt) 
  (0,1) circle (2.5pt)
  (0,2) circle (2.5pt)
  (1,0) circle (2.5pt)
  (1,1) circle (2.5pt)
  (2,0) circle (2.5pt) 
 (3,0) circle (2.5pt);
     \node[anchor=south west] at (0,0) {$1$};
    \node[anchor=south west] at (1,0) {$3$};
    \node[anchor=south west] at (2,0) {$3$};
    \node[anchor=south west] at (3,0) {$1$};
     \node[anchor=south west] at (0,1) {$2$};
    \node[anchor=south west] at (0,2) {$1$};
    \node[anchor=south west] at (1,1) {$2$};
\end{tikzpicture}
~~~~~~~~~
\begin{tikzpicture}[scale=0.8]
\draw[thick,  color=black]  
  (0,0) -- (3,0) -- (3,4) -- cycle;
\fill[thick,  color=black]
  (0,0) circle (2.5pt) 
  (1,0) circle (2.5pt)
  (1,1) circle (2.5pt)
  (2,0) circle (2.5pt) 
  (2,1) circle (2.5pt) 
  (2,2) circle (2.5pt) 
 (3,0) circle (2.5pt)
 (3,1) circle (2.5pt) 
 (3,2) circle (2.5pt) 
 (3,3) circle (2.5pt) 
 (3,4) circle (2.5pt) ;
     \node[anchor=south west] at (0,0) {$1$};
    \node[anchor=south west] at (1,0) {$3$};
    \node[anchor=south west] at (2,0) {$3$};
    \node[anchor=south west] at (3,0) {$1$};
     \node[anchor=south west] at (1,1) {$2$};
    \node[anchor=south west] at (2,1) {$6$};
    \node[anchor=south west] at (3,1) {$4$};
    \node[anchor=south west] at (2,2) {$3$};
    \node[anchor=south west] at (3,2) {$6$};
    \node[anchor=south west] at (3,3) {$4$};
    \node[anchor=south west] at (3,4) {$1$};
\end{tikzpicture}
~~~~~~~~~~
\begin{tikzpicture}[scale=0.8]
\draw[thick,  color=black]  
  (0,0) -- (1,0) -- (5,4) -- (3,4) --cycle;
\fill[thick,  color=black]
  (0,0) circle (2.5pt) 
  (1,0) circle (2.5pt)
  (1,1) circle (2.5pt)
  (2,1) circle (2.5pt) 
  (2,2) circle (2.5pt)  
 (3,2) circle (2.5pt) 
 (3,3) circle (2.5pt) 
 (3,4) circle (2.5pt)
(4,3) circle (2.5pt)
(4,4) circle (2.5pt)
(5,4) circle (2.5pt);
     \node[anchor=south west] at (0,0) {$1$};
    \node[anchor=south west] at (1,0) {$1$};
     \node[anchor=south west] at (1,1) {$2$};
    \node[anchor=south west] at (2,1) {$4$};
    \node[anchor=south west] at (2,2) {$3$};
    \node[anchor=south west] at (3,2) {$6$};
    \node[anchor=south west] at (3,3) {$4$};
    \node[anchor=south west] at (3,4) {$1$};
        \node[anchor=south west] at (4,4) {$2$};
    \node[anchor=south west] at (5,4) {$1$};
        \node[anchor=south west] at (4,3) {$4$};
\end{tikzpicture}
\caption{The Laurent polynomials $f$, $f_1:=\mut_m^gf$ and $\mut_s^hf_1$.}
\label{fig 3}
\end{figure}
\end{example}

\section{The tangent space of the deformation functor}\label{sec tangspace}

 The tangent space $T^1_X$ decomposes according to the grading by $\tM$ ($T^1_X=\bigoplus_{r\in \tM}T^1_X(-r)$) and it has been extensively studied through various convex-geometric descriptions depending on the polytope defining the affine Gorenstein toric variety (see \cite{Alt94}, \cite{Alt97b}, \cite{AS98}, and \cite{Fil18}).

Furthermore, results from \cite{CFP22} and \cite{Fil23} indicate that it is often more natural to consider deformations of the pair $\paar$ rather than those of $X$ alone.
 In our case we have
$$X=\spec \CC[S]=\spec \CC[\bfx,u]/\cI_P\quad \text{and} \quad \partial X=\spec \CC[\partial S]=\CC[\bfx,u]/(\cI_P,u).$$

Since $\partial X\hookrightarrow X$ is a regular embedding, we can apply the results from \cite{CFGK17} to describe the module $T^1_{\paar}$. We define $A=\CC[\bfx,u]/\cI_P$ and $A':=A/(u)$. 
To study deformations of a pair $\paar$, we use the following exact sequence (see, e.g., \cite[Equation 11]{CFGK17}):
\begin{equation}\label{eq varphi1}
0\to T_{\partial X}\to T_{X|\partial X}\xrightarrow{\varphi} N_{\partial X|X}\xrightarrow{\varphi_1} T^1_{(X,\partial X)}\to T^1_X\to 0,
\end{equation}
where $T_{X|\partial X} = \Der_\CC(A,A) \otimes A'$ and $N_{\partial X|X} = \Hom_{A'}((u)/(u)^2,A')$. 

The following lemma determines which derivations in $T_{X|\partial X}$ map to nonzero elements in $N_{\partial X|X}$ under the map $\varphi$. This analysis is crucial for computing the dimension of the module $T^1_{\paar}$.

We denote $\eta=\eta_P$ and $\partial=\partial_P$. Let $s_i=(c_i,\eta(c_i))\in H_P$ (for some $i=1,...,r$) be an element of the Hilbert basis such that the minimum of $\lan c_i,\cdot \ran$ on $P$ is achieved only at one vertex $v_P(c_i)$ of $P$. We fix such $i$ and for every $j=1,...,r$ we define  
$$n_j:=\sum_{z=0}^{\infty}\eta(\partial(\bfe_j+z\bfe_i)+\bfe_i),$$
where $\bfe_j$ denotes the $j$-th basis vector of $\mathbb{N}^r$.

Note that by definition there exists $z\in \NN$ such that $\eta(\partial(\bfe_j+n\bfe_i)+\bfe_i)=0$ for every $n\geq z$ and thus the above sum is finite.

\begin{lemma}\label{lem 51}
It holds that
$$
D_i := \sum_{j=1}^r A_j \frac{\partial}{\partial x_j} + x_i\frac{\partial}{\partial u}\in \Der_\CC(A,A), 
\quad\text{where}\quad 
A_j = n_j \bfx^{\partial(\bfe_i+\bfe_j)} u^{\eta(\bfe_i+\bfe_j)-1}.
$$
Note that $A_j$ is well-defined, since if $\eta(\bfe_i+\bfe_j)=0$, then $n_j=0$.
\end{lemma}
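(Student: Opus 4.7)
The plan is to verify that the $\CC$-linear derivation $D_i$ of $\CC[\bfx,u]$, defined by $D_i(x_j) = A_j$ and $D_i(u) = x_i$, preserves the ideal $\cI_P$; this will show it descends to a derivation of $A = \CC[\bfx,u]/\cI_P$ as claimed. By Proposition~\ref{acf1} the ideal $\cI_P$ is generated by the binomials $f_\bfk$, so it suffices to check that $D_i(f_\bfk) \in \cI_P$ for every $\bfk \in \NN^r$.

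The first step will be to extract a closed form for $n_j$. Unwinding the definitions, $\partial(\bfe_j + z\bfe_i) \in \NN^r$ corresponds as an element of $\tM$ to $(c_j + zc_i, \eta(c_j + zc_i))$, and a short rewrite turns the $z$-th summand of $n_j$ into $\eta(c_j + zc_i) + \eta(c_i) - \eta(c_j + (z+1)c_i)$, so the series is telescoping. Using the hypothesis that $v := v_P(c_i)$ is the unique minimizer of $\lan\cdot, c_i\ran$ on $P$, I will argue that for all sufficiently large $z$ the vertex $v$ becomes the unique minimizer of $\lan\cdot, c_j + zc_i\ran$, so the summand vanishes from that point on. Telescoping the finite remainder then yields
$$
n_j \;=\; \eta(c_j) + \lan v, c_j\ran \;=\; \lan v_P(c_i) - v_P(c_j), c_j\ran \;\ge\; 0,
$$
with equality exactly when $v$ also minimizes $c_j$, that is, when $\eta(\bfe_i + \bfe_j) = 0$. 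This immediately verifies the well-definedness claim at the end of the statement.

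The second step is to expand $D_i(f_\bfk)$ using the Leibniz rule. All monomials appearing are $\tM$-homogeneous of the same degree $s_\bfk + s_i - R^*$, so whenever $\eta(\bfk + \bfe_i) \ge 1$ each of them reduces in $A$ to the canonical element $\chi^{s_\bfk + s_i - R^*}$. Collecting coefficients gives
$$
D_i(f_\bfk) \;\equiv\; \Bigl[\textstyle\sum_{j} k_j n_j - \sum_{j} \partial(\bfk)_j n_j - \eta(\bfk)\Bigr]\,\chi^{s_\bfk + s_i - R^*} \pmod{\cI_P},
$$
and substituting the closed form for $n_j$ together with the identities $\sum_j \partial(\bfk)_j c_j = c_\bfk$ and $\sum_j \partial(\bfk)_j \eta(c_j) = \eta(c_\bfk)$ collapses the bracket to $\eta(\bfk) - \eta(\bfk) = 0$. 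The remaining case $\eta(\bfk + \bfe_i) = 0$ is handled directly: here $v$ must be a simultaneous minimizer of every $c_j$ with $k_j \ne 0$ and of every $c_l$ with $\partial(\bfk)_l \ne 0$, which forces $n_j = 0$ (and $\eta(\bfk) = 0$) for all relevant indices, so each term of $D_i(f_\bfk)$ already vanishes in $\CC[\bfx,u]$.

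The hard part will be the telescoping computation and, specifically, the argument that the uniqueness of $v_P(c_i)$ forces the minimizer of $c_j + zc_i$ to stabilize to $v$ for $z$ large, so that the formally infinite sum defining $n_j$ is actually finite and collapses to $\lan v_P(c_i) - v_P(c_j), c_j\ran$. Once this combinatorial identity is in hand, both the well-definedness of $A_j$ and the relation $D_i(\cI_P) \subseteq \cI_P$ follow from the short algebraic manipulations sketched above.
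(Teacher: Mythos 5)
Your proposal is correct and follows essentially the same route as the paper: reduce to checking $D_i(f_\bfk)\in\cI_P$, compute $n_j$ by telescoping via Lemma~\ref{lem bfk}, and use $\tM$-homogeneity to collapse the verification to the identity $\sum_j k_j\eta(c_j)=\sum_j b_j\eta(c_j)+\eta(\bfk)$. The only cosmetic differences are that the paper normalizes $v_P(c_i)=0$ so that $n_j=\eta(c_j)$ (your $\langle v_P(c_i)-v_P(c_j),c_j\rangle$ is the translation-invariant form of the same quantity), and your explicit treatment of the degenerate case $\eta(\bfk+\bfe_i)=0$ is a welcome bit of extra care that the paper leaves implicit.
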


\begin{proof}
We need to check that $D_i(f_\bfk)\in \cI_P$ for all $f_\bfk\in \cI_P$. It holds that  
$$
\begin{aligned}
D_i(\bfx^\bfk - \bfx^{\partial(\bfk)} u^{\eta(\bfk)})
&= \sum_{j=1}^r k_j A_j \bfx^{(k_1,\dots,k_{j-1},k_j - 1,k_{j+1},\dots,k_r)} \\
&\quad - \sum_{j=1}^r b_j A_j \bfx^{(b_1,\dots,b_{j-1},b_j - 1,b_{j+1},\dots,b_r)}u^{\eta(\bfk)} - \eta(\bfk) x_i \bfx^{\bfb} u^{\eta(\bfk)-1},
\end{aligned}
$$
where  
$\bfx^{\partial(\bfk)}=\bfx^\bfb=\prod_{j=1}^r x_j^{b_j}.$

Without loss of generality, we assume that the minimum of $\lan c_i, \cdot\ran$ is achieved at $0$ in $P\subset N$, i.e.\ $v_P(c_i)=0\in N$. Then by Lemma \ref{lem bfk} we have $$\eta(\partial(\bfe_j+z\bfe_i)+\bfe_i)=\eta(\bfe_j+(z+1)\bfe_i)-\eta(\bfe_j+z\bfe_i)$$
and thus 
$$n_j=\sum_{z=0}^\infty\left(\eta(\bfe_j+(z+1)\bfe_i)-\eta(\bfe_j+z\bfe_i)\right)=$$
$$=\sum_{z=0}^{\infty}\big(\left(\eta(c_j)+(z+1)\eta(c_i)-\eta(c_j+(z+1)c_i)\right)-\left(\eta(c_j)+z\eta(c_i)-\eta(c_j+zc_i)\right)\big)=\eta(c_j).$$
Thus
$$A_j=\eta(c_j)\bfx^{\partial(\bfe_i+\bfe_j)}u^{\eta(\bfe_i+\bfe_j)-1}.$$ 
Since $D_i(f_\bfk)$ is homogeneous, we see that $D_i(f_\bfk)\in \cI_P$ because  
\begin{equation}\label{eq lhs rhs}
\sum_{j=1}^rk_j\eta(c_j)=\left(\sum_{j=1}^rb_j\eta(c_j)\right)+\eta(\bfk).
\end{equation}
Indeed, the left-hand side (LHS) of \eqref{eq lhs rhs} is the $\ZZ$-coordinate of 
$
\deg(\bfx^\bfk)\in \tM = M\oplus \ZZ,
$
and the right-hand side (RHS) of \eqref{eq lhs rhs} is the $\ZZ$-coordinate of 
$$
\deg(\bfx^{\partial(\bfk)}u^{\eta(\bfk)})\in \tM = M\oplus \ZZ.
$$
\end{proof}

As at the end of the proof above, for any homogeneous polynomial $g(\bfx,u)$, we will denote its degree by $\deg(g(\bfx,u))\in \tM$.

\begin{proposition}\label{prop 52prop}
It holds that $T^1_{\paar}(-r)\cong T^1_{X}(-r)$ for all $r$ except for 
$$
r\in \{R^*-s~|~s\in \partial(S),~(\varphi_s=0)\cap P~\text{is a face of $P$, which is not a vertex}\},
$$
for which it holds that $\dim_\CC T^1_{\paar}(-r)=1+\dim_\CC T^1_X(-r)$.
\end{proposition}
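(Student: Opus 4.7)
My plan is to read off dimensions from the exact sequence \eqref{eq varphi1}, which gives
$$\dim_\CC T^1_{\paar}(-r) - \dim_\CC T^1_X(-r) = \dim_\CC \operatorname{coker}(\varphi)(-r),$$
so the proposition reduces to computing $\operatorname{coker}(\varphi)(-r)$ in each lattice degree $-r$. First I would identify $N_{\partial X|X}$ as a graded $A'$-module: since $\partial X \hookrightarrow X$ is the principal divisor cut out by $u$ of degree $R^*$, the conormal $(u)/(u)^2$ is free of rank one over $A'$ on $u$ in degree $R^*$, so dually $N_{\partial X|X}$ is free of rank one on a generator $u^*$ of degree $-R^*$. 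Hence $N_{\partial X|X}(-r)$ is one-dimensional (spanned by $\bfx^{\partial(s)}\cdot u^*$) exactly when $r = R^* - s$ for some $s \in \partial(S)$, and vanishes otherwise. For $r \notin R^* - \partial(S)$ the cokernel is automatically zero and $T^1_{\paar}(-r) \cong T^1_X(-r)$, consistent with the statement since such $r$ are not in the exceptional set.

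Next I fix $r = R^* - s$ with $s = (c, \eta_P(c)) \in \partial(S)$, and set $F_c := (\varphi_s = 0) \cap P$. Any $D \in T_{X|\partial X}(-r)$ sends $u$ to an element of degree $s$, which modulo $u$ reduces to $\alpha(D)\,\bfx^{\partial(s)}$ for a unique scalar $\alpha(D) \in \CC$; so $\varphi$ is surjective in degree $-r$ iff some $D$ has $\alpha(D) \neq 0$. When $F_c = \{v_P(c)\}$ is a single vertex, I would imitate Lemma~\ref{lem 51} by translating $P$ so that $v_P(c) = 0$ and defining
$$D_s := \sum_{j=1}^r \eta(c_j)\,\bfx^{\partial(s+s_j)}\,u^{\eta(s+s_j)-1}\,\frac{\partial}{\partial x_j} + \bfx^{\partial(s)}\,\frac{\partial}{\partial u}.$$
The telescoping identity in the proof of Lemma~\ref{lem 51} --- which expresses $n_j$ as $\eta(c_j)$ --- depends only on the uniqueness of the minimum of $\langle\cdot,c\rangle$ on $P$, not on $c$ being a Hilbert-basis element, so the same calculation establishes $D_s(f_\bfk) \in \cI_P$ for every $\bfk$. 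Since $D_s(u) = \bfx^{\partial(s)}$, we get $\alpha(D_s) = 1$, hence $\operatorname{coker}(\varphi)(-r) = 0$.

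When $F_c$ has dimension $\geq 1$, I would argue conversely that no derivation $D$ with $\alpha(D) \neq 0$ can exist, which together with one-dimensionality of $N_{\partial X|X}(-r)$ yields $\operatorname{coker}(\varphi)(-r) = 1$. Writing $D(x_j) = B_j$ and requiring $D(f_\bfk) \in \cI_P$ for every $\bfk$ forces $B_j = \alpha(D)\,N_j\,\bfx^{\partial(s+s_j)}\,u^{\eta(s+s_j)-1}$ via the same telescoping manipulation, where $N_j$ is the infinite sum analogous to $n_j$. For $F_c$ a vertex this sum collapses to $\eta(c_j)$; for $\dim F_c \geq 1$ the sum receives contributions from every vertex of $F_c$, and evaluating $N_j$ along two distinct vertices $w_1, w_2 \in F_c$ produces different values. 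Reconciling them forces $\alpha(D) = 0$, hence $\operatorname{im}(\varphi)(-r) = 0$. The hard part will be making this final obstruction argument fully rigorous --- identifying precisely which monomials in $D(f_\bfk)$ cannot be canceled modulo $\cI_P$ in the graded piece of degree $-r$, and verifying that the linear constraints coming from two distinct vertices of $F_c$ are genuinely incompatible. The surjectivity direction is a direct adaptation of the computation already carried out in Lemma~\ref{lem 51}.
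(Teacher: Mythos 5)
Your setup and your treatment of the vertex case agree with the paper's proof: both arguments run the exact sequence \eqref{eq varphi1}, identify $N_{\partial X|X}=\Hom_{A'}((u)/(u)^2,A')$ as free of rank one on a generator of degree $-R^*$ (so that the cokernel of $\varphi$ can only be nonzero in degrees $-(R^*-s)$ with $s\in\partial(S)$, where it is at most one-dimensional), and dispose of the case where $(\varphi_s=0)\cap P$ is a single vertex by producing a derivation with $D(u)=\bfx^{\partial_P(s)}$ via the telescoping computation of Lemma \ref{lem 51}. The paper is exactly as terse as you are on that last point (it simply invokes Lemma \ref{lem 51}), so no complaint there.

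The genuine gap is in the remaining case, where $(\varphi_s=0)\cap P=E$ is a face of positive dimension and one must show that \emph{no} derivation hits the generator of $N_{\partial X|X}(-(R^*-s))$. This is the only step of the proposition that requires a real argument, and your proposal leaves it as a sketch that you yourself label ``the hard part.'' As written it is not yet an argument: the telescoping sum $N_j$ converges to a single well-defined number (determined by the vertex of $E$ at which $c_j$ is minimized), so there are not literally ``two values to reconcile''; what you would actually have to show is (i) that the conditions $D(f_\bfk)\in\cI_P$ force the coefficients $B_j$ to be these specific multiples of $\alpha(D)$ --- which requires choosing suitable binomials, and is delicate because $s$ need not be a monoid generator --- and (ii) that the resulting values violate the degree identity \eqref{eq lhs rhs} for some explicit $\bfk$. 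The paper sidesteps all of this with a shorter device: it sets $l_E:=\min\{l\in\NN\mid \bfx^\bfk-x_E^{n}u^{l}\in\cI_P\}$, picks a binomial $f=\bfx^\bfk-x_E^{n_E}u^{l_E}$ realizing this minimum, and observes that for any candidate derivation $D=\sum_jA_j\tfrac{\partial}{\partial x_j}+x_E\tfrac{\partial}{\partial u}$ the term $-l_Ex_E^{n_E+1}u^{l_E-1}$ in $D(f)$ cannot be absorbed into $\cI_P$ precisely because $l_E$ is minimal; hence $D(f)\notin\cI_P$ and $\varphi$ misses the generator. To complete your write-up you should either import this minimality trick or actually exhibit the $\bfk$ witnessing the inconsistency of your linear system; until then the decisive half of the proposition is unproved.
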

\begin{proof}
    Let $E:=(\varphi_s=0)\cap P$ be a face of $P$ for some $s\in \partial(P)$. 
    Let $s_E$ be the element of the Hilbert basis of $S_P$, such that $s_E^{\perp}\cap \sigma$ equals the face spanned by $\{(v,1)\in \tN\mid v\in E\}$, and let $x_E$ be the corresponding variable (if $E=P$ we have  $s_E=0$ and $x_E=1$). We define
    $$l_E:=\min\{l\in \NN \mid \bfx^\bfk-x_E^{n}u^{l}\in \cI_P, \text{ where $\bfk\in \NN^r$, $n\in \NN$}\}.$$
    Clearly $l_E\geq 1$ and there exists $f(\bfx,u):=\bfx^\bfk-x_E^{n_E}u^{l_E}\in \cI_P$ for some $n_E\in \NN$, $\bfk\in \NN^r$. 
    Let us show that there does not exist a derivation  
$$
D = \sum_{j=1}^r A_j \frac{\partial}{\partial x_j} + x_E \frac{\partial}{\partial u}
$$
such that  
$
D(f(\bfx,u)) \in \cI_P.
$
Indeed, we have  
$$
D(\bfx^\bfk) - D(x^{n_E}_E) u^{l_E} - l_E x^{n_E+1}_E u^{l_E-1} \not\in \cI_P,
$$
by definition of $l_E$.

Thus, the one-dimensional vector space $N_{\partial X|X}(-(R^*-s_E))$ (generated by the element $u \mapsto s_E$, since $N_{\partial X|X} = \Hom_{A'}((u)/(u)^2, A')$) does not lie in the image of the map  
$$
T_{X|\partial X} \xrightarrow{\varphi} N_{\partial X|X}.
$$
Therefore, we obtain  
$$
\dim_\mathbb{C} T^1_{\paar}(-(R^*-s_E)) = 1 + \dim_\mathbb{C} T^1_X(-(R^*-s_E)).
$$
Finally, by Lemma \ref{lem 51}, we see that for any other $m \in \tM$ (i.e., $m \neq R^*$ and $m \neq R^*-s_E$ for some face $E\subset P$ that is not a vertex), we have  
$
\dim_\mathbb{C} T^1_{\paar}(-m) = \dim_\mathbb{C} T^1_X(-m).
$
\end{proof}

\begin{definition}\label{def 61}
  Let $P$ be a polygon and let $m\in \tM$ be such that $m\neq kR^*$ for any $k\in \ZZ$. We say that $P$ is \emph{$m$-mutable} if there exists a line segment $Q\subset (\pi_M(m)=0)$ of lattice length $1$ such that $(m,Q)$ is a deformation pair of $P$. 
\end{definition}

\begin{remark}
    Equivalently, $P$ is $m$-mutable if and only if there exists a Laurent polynomial $f$ with $\newt(f) = P$ that is $m$-mutable.

\end{remark}

For every edge $E=[v,w]$, let $s_{E}$ be the fundamental generators of the dual cone chosen such that $s_{E}^{\perp}\cap \sigma$ equals the face spanned by $(v,1)$ and $(w,1)$. We denote 
\begin{equation}\label{def ce}
c_E:=\pi_M(s_E)\in M
\end{equation}
to be the projection of $s_E$ to $M$, i.e.\ $s_E=(c_E,\eta_P(c_E))$. We call $c_E$ also the \emph{normal vector} of an edge $E$.
The next proposition connects dimension of $T^1_{(X_P,\partial X_P)}$ with mutations of $P$.

\begin{proposition}\label{t1 prop}
Let $P$ be a polygon, and let $m \neq k R^*$ for any $k \in \NN$. Then
    $$
    \dim_{\CC}T^1_{\paar}(-m) =
    \begin{cases} 
    1 & \text{if $P$ is $m$-mutable and } \max_{v \in P} \varphi_m(v) \geq 1, \\ 
    0 & \text{otherwise}.
    \end{cases}
    $$
For any $k\in \NN$ let $l_k$ denote the number of edges of $P$ that have lattice length greater or equal to $k$. It holds that 
$$
\dim_{\CC}T^1_{\paar}(-kR^*)=l_k-2.
$$
\end{proposition}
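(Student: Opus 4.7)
The plan is to combine Proposition \ref{prop 52prop}, which reduces most $T^1_{\paar}(-m)$-computations to $T^1_X(-m)$ with a $+1$ correction precisely when $m = R^*-s_E$ for an edge $E$, with the known combinatorial formulas for $T^1_X(-m)$ in terms of Minkowski summands of the $m$-slices of $P$ (see \cite{Alt97b}, \cite{AS98}, \cite{Fil18}).

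For Part 1 (i.e.\ $m \neq kR^*$) I would first treat the generic case $m \neq R^*-s_E$: here $\dim T^1_{\paar}(-m) = \dim T^1_X(-m)$, and the Altmann description expresses $T^1_X(-m)$ in terms of non-trivial Minkowski decompositions of the positive slices $(\varphi_m = i) \cap P$ compatibly across all $i \in \NN$ with $i \geq 1$. In dimension $2$ each such slice is a line segment whose only non-trivial bounded Minkowski summands are translates of primitive segments of lattice length $1$, so the contribution is either $0$ or $1$. The condition $\max_v \varphi_m(v) \geq 1$ forces the existence of some non-empty positive slice (without it $T^1_X(-m) = 0$ by convex-geometric vanishing), while the cross-slice compatibility of a length-$1$ segment $Q$ is exactly the $m$-mutability condition of Definition \ref{def 61}. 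For the case $m = R^*-s_E$ one checks directly that $T^1_X(-m) = 0$ (the only non-empty positive slice is $E$ itself, which admits no non-trivial Minkowski decomposition in this degree), while the $+1$ correction from Proposition \ref{prop 52prop} yields dimension $1$; this is consistent because $P$ is automatically $(R^*-s_E)$-mutable (take $Q$ to be a primitive sub-segment of $E$) and $\max \varphi_m = 1$ is attained on $E$.

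For Part 2, since $\pi_M(kR^*) = 0$ no degree $R^*-s_E$ has the form $kR^*$, and Proposition \ref{prop 52prop} gives $\dim T^1_{\paar}(-kR^*) = \dim T^1_X(-kR^*)$. The Altmann-type description in this Gorenstein degree (see \cite{Alt97}, \cite{Alt00}) parametrizes deformations by assigning to each edge $E$ of $P$ an infinitesimal normal translation, subject to the constraint that only edges of lattice length $\geq k$ can move independently; this gives $l_k$ free parameters. The requirement that the translated edges still close up into a polygon imposes two linear relations, one per coordinate in $N_\RR \cong \RR^2$, yielding $\dim T^1_{\paar}(-kR^*) = l_k - 2$. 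The main difficulty is the rigorous cross-walk between Altmann's combinatorial formula and Definition \ref{def 61}, particularly matching the length-$1$ segment $Q$ in the deformation pair $(m, Q)$ with the Minkowski summand appearing in the $T^1_X$ formula across all positive slices.
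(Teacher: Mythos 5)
Your strategy coincides with the paper's: quote Altmann's formulas for $\dim_\CC T^1_X(-m)$ from \cite{Alt00} and transfer them to the pair via Proposition \ref{prop 52prop}. The gaps are in how you apply Proposition \ref{prop 52prop}. First, in Part 2 you claim there is no correction in degrees $kR^*$; but $s=0$ lies in $\partial(S)$ and $(\varphi_0=0)\cap P=P$ is a face which is not a vertex, so $R^*$ itself belongs to the exceptional set of Proposition \ref{prop 52prop}. This is not cosmetic: the formula the paper imports from \cite{Alt00} is $\dim_\CC T^1_X(-R^*)=l_1-3$ (your count of ``$l_k$ edge translations minus two closing conditions'' forgets that in degree $-R^*$ the global dilation of $P$ is a trivial deformation of $X$, though not of the pair), and it is exactly the $+1$ at $R^*$ that turns $l_1-3$ into the asserted $l_1-2$. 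As written, your Part 2 either outputs $l_1-3$ at $k=1$, or silently computes the pair invariant while claiming to compute $T^1_X$.

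Second, in Part 1 the borderline degrees are those $m$ with $P$ $m$-mutable and $\max_{v\in P}\varphi_m(v)=1$; one checks these are exactly $m=R^*-ks_E$ for an edge $E$ and $k\ge 1$, and since $ks_E\in\partial(S)$ has zero-face $E$, all of them lie in the exceptional set of Proposition \ref{prop 52prop} as stated, not only the case $k=1$. The version of Altmann's formula used in the paper has threshold $\max_{v\in P}\varphi_m(v)\ge 2$, so $T^1_X(-m)=0$ for every such $m$, and the $+1$ correction is what yields dimension $1$. You apply the correction only at $k=1$, so for $k\ge 2$ your argument returns $0$ where the proposition asserts $1$. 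Moreover, your generic-case reading of Altmann (dimension $1$ as soon as some positive slice is non-empty and admits a length-one segment as summand) is incompatible with your own direct verification that $T^1_X(-(R^*-s_E))=0$: the degree $R^*-s_E$ satisfies your generic criterion, so you cannot assert both. Pinning down the threshold in Altmann's theorem and applying the boundary correction on the whole family $R^*-ks_E$, $k\ge 1$, is precisely where the substance of the cross-walk lies, and it is missing from the proposal.
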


\begin{proof}
By \cite[Theorem 4.4]{Alt00} it follows that 
    \begin{equation}\label{eq t11}
    \dim_{\CC}T^1_{X}(-m) =
    \begin{cases} 
    1 & \text{if $P$ is $m$-mutable and } \max_{v \in P} \varphi_m(v) \geq 2, \\ 
    0 & \text{otherwise},
    \end{cases}
    \end{equation}
    if $m\neq k R^*$ for any $k\in \NN$, and 
        \begin{equation}\label{eq t12}
    \dim_{\CC}T^1_{X}(-kR^*) =
    \begin{cases} 
    l_k-3 & \text{if $k=1$} \\ 
    l_k-2 & \text{if $k\geq 2$}.
    \end{cases}
    \end{equation}
By Proposition \ref{prop 52prop} we conclude the proof.
\end{proof}

\begin{example}\label{ex 53}
    Let $P=\conv\{(0,0),(4,0),(0,5)\}$ and $X=X_P$. We denote the edges of $P$ by $E=\conv\{(0,0),(4,0)\}$, $F=\conv\{(0,0),(0,5)\}$ and $G=\conv\{(4,0),(0,5)\}$. Let $\mathbb{Z}_{\geq 1}$ denote the set of positive integers. We have $$T^1_{\paar}(-m)\ne 0$$ if and only if $m\in \cM_1\cup \cM_2$, where 
$$
\cM_1 := 
\big\{ nR^* - k s_E, \, nR^* - k s_F \mid n \in \{1,2,3,4\}, \, k \in \mathbb{Z}_{\geq 1} \big\},
$$
$$
\cM_2:=\big\{ 5R^* - k s_F \mid k \in \mathbb{Z}_{\geq 2} \big\}
\cup 
\big\{ R^* - k s_G \mid k \in \mathbb{Z}_{\geq 1} \big\}\cup \{R^*\}.
$$
If $m\in \cM_1\cup \cM_2$ we have $\dim_\CC T^1_{\paar}(-m)=1$. 
 Moreover, $T^1_{X}(-m)\ne 0$ if and only if $$m\in \cM_1\cup \cM_2\setminus \{R^*-s_E,R^*-s_F,R^*-s_G,R^*\}.$$
\end{example}

\section{Mutable deformations of polygons}\label{sec 6}

In this section, we construct multi-parameter deformations of affine Gorenstein toric varieties associated with polygons, using mutations of Laurent polynomials.

Let $X_P$ be a three-dimensional affine Gorenstein toric variety associated with the polygon $P\subset N_{\RR}$. 
Recall Definition \ref{def 61}. 
We define
$$
\cE(P):=\left\{ m\in \tM \mid P \text{ is } m\text{-mutable}\right\}.
$$
Additionally, for a Laurent polynomial $f\in \CC[N]\cong \CC[\ZZ^2]$, we denote  
\begin{equation}\label{eq cmf}
\cM(f) := \left\{ m \in \tM \mid f \text{ is } m\text{-mutable} \right\}\subset \cE(\newt(f)).
\end{equation}
For every $m\in \cE(P)$, we fix a line segment $Q$ of lattice length $1$, such that $(m,Q)$ is a deformation pair of $P$. We denote $P_m:=P_{(m,Q)}$. 
For $m,r\in \cE(P)$, we define
\begin{equation}\label{psimr}
\psi_{m}(r) := 
\begin{cases}
\psi_{(m,Q)}(r), & \text{if } r\ne m, \\
-m, & \text{if } r=m,
\end{cases}
\end{equation}
where recall the definition of $\psi_{(m,Q)}(r)$ from \eqref{eq psimr}.
\begin{lemma}\label{lem 61}
    The map $\psi_m:\cE(P)\to \cE(P_m)$ is a bijection. Moreover, if $f$ is a Laurent polynomial with $\newt(f)=P$ that is $m$-mutable with $\newt(\mut_m^gf)=P_m$, then $\psi_m:\cM(f)\to \cM(\mut_m^gf)$ is also a bijection.
\end{lemma}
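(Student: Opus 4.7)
The plan is to derive both bijections from Proposition \ref{prop mutability}, treating the special case $r=m$ separately using the definition of $\psi_m$ and the involution property of mutation. First, for the statement about $\cM(f)$, set $f':=\mut_m^g f$, so $\newt(f')=P_m$ by hypothesis. For each $r\in \cM(f)$ with $r\neq m$, Proposition \ref{prop mutability} gives directly that $f'$ is $\psi_{(m,g)}(r)=\psi_m(r)$-mutable, hence $\psi_m(r)\in \cM(f')$. For $r=m$, the polynomial $f'$ is visibly $(-m,g)$-mutable since $\mut_{-m}^g f' = f$, and by definition $\psi_m(m)=-m$. An elementary computation from Definition \ref{def prva not def} shows that $\psi_{(-m,g)}\circ \psi_{(m,g)}=\operatorname{id}$ in the non-collinear regime, while the collinear branch and the special value at $m$ are manifestly involutive. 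Applying the same argument to the reverse mutation $\mut_{-m}^g f' = f$ produces $\psi_{-m}$ as a two-sided inverse of $\psi_m$, yielding the desired bijection $\cM(f)\to\cM(f')$.

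For the statement about $\cE(P)$, the plan is to use the tautological decomposition $\cE(P)=\bigcup_f \cM(f)$, where $f$ ranges over normalized Laurent polynomials with $\newt(f)=P$. Given $r\in \cE(P)$, the main step is to produce a Laurent polynomial $\tilde f$ with $\newt(\tilde f)=P$ that is simultaneously $m$-mutable and $r$-mutable; the previously established bijection applied to $\tilde f$ then yields $\psi_m(r)\in \cM(\mut_m^g \tilde f)\subseteq \cE(P_m)$. To construct $\tilde f$, I would observe that both $m$-mutability and $r$-mutability are linear constraints on the coefficients of any Laurent polynomial supported on $P\cap N$; both constraints respect the normalization $a_v=1$ at vertices of $P$ (each vertex being extremal in its respective $\varphi_m$- and $\varphi_r$-slice and hence unconstrained beyond the normalization); and both define nonempty affine subspaces because $P$ is individually $m$- and $r$-mutable. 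A dimension/genericity argument then provides a common solution whose support is all of $P\cap N$, giving the desired $\tilde f$. The reverse inclusion $\cE(P_m)\subseteq \psi_m(\cE(P))$ follows symmetrically by interchanging the roles of $P$ and $P_m$ via the reverse mutation and invoking $\psi_{-m}=\psi_m^{-1}$.

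The main obstacle I foresee is the joint-realizability step: exhibiting a Laurent polynomial $\tilde f$ that is simultaneously $m$- and $r$-mutable while retaining $\newt(\tilde f)=P$. This requires controlling the intersection of the two affine constraint spaces and verifying that the $m$- and $r$-mutability conditions do not force vanishing of a coefficient at some lattice point of $P$. The argument is cleanest when $\pi_M(m)$ and $\pi_M(r)$ are non-collinear (the two slicings are transverse, so the constraints naturally decouple along slice intersections) and mildly more delicate in the collinear case, where the two sets of constraints are nested rather than transverse, but one may explicitly reduce to a single divisibility condition of higher order. Once this step is settled, the remainder of the proof is a direct application of Proposition \ref{prop mutability} combined with the involution property of $\psi_m$.
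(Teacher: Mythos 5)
Your treatment of the second claim, the bijection $\psi_m\colon\cM(f)\to\cM(\mut_m^g f)$, is correct and is exactly the paper's argument: apply Proposition \ref{prop mutability} for $r\neq m$, note that $\mut_m^g f$ is $(-m,g)$-mutable because $\mut_{-m}^g(\mut_m^g f)=f$, and use $\psi_{(-m,g)}\circ\psi_{(m,g)}=\mathrm{id}$ to exhibit $\psi_{-m}$ as a two-sided inverse.

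The first claim is where your proposal has a genuine gap. You reduce $\cE(P)\subseteq\psi_m^{-1}(\cE(P_m))$ to the existence, for each $r\in\cE(P)$, of a single Laurent polynomial $\tilde f$ with $\newt(\tilde f)=P$ that is simultaneously $m$-mutable and $r$-mutable, and you justify this by saying that the two constraint sets are nonempty affine subspaces so a ``dimension/genericity argument'' produces a common point. That is not a proof: two nonempty affine subspaces of a vector space can be disjoint, and nothing in your argument controls the relative position of the two constraint sets (the relevant linear conditions are ``the $\varphi_m$-slice at level $i$ lies in the image of multiplication by $g^i$,'' and these interact nontrivially with the analogous conditions for $r$). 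The paper itself shows that simultaneous mutability is a delicate matter: in the proof of Theorem \ref{th dis kon} a polynomial mutable for a whole collection of $m$'s is constructed only under the extra hypothesis that there is a lattice point $v$ with $m(v)\le 0$ for all $m$ in the collection, and the failure of such joint realizability is precisely what drives that theorem. So the joint-realizability step cannot be waved through. The paper's (one-line) proof does not take this route at all: it deduces both bijections directly from Proposition \ref{prop mutability} together with the definition of $m$-mutability of a polygon (Definition \ref{def 61} and Lemma \ref{lem mgex}: a condition on the slices $P\cap(\varphi_m=i)$ via deformation pairs, which in two variables is a lattice-width condition that transfers under mutation exactly as the divisibility does in the proof of Proposition \ref{prop mutability}), with $\psi_{-m}$ as the inverse. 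If you want to keep your decomposition $\cE(P)=\bigcup_f\cM(f)$, you must either prove joint realizability by an explicit construction or replace it by the combinatorial slice-width argument; as written, the $\cE(P)$ half of your proof is incomplete.
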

\begin{proof}
    This follows immediately by definition and Proposition \ref{prop mutability}: the inverse of this map is $\psi_{-m}$. 
\end{proof}

For $m\in \cE(P)$, we define 
   \begin{equation}\label{muttm}
\mut_{m}:\tM\to \tM,~~~   \mut_{m}(s):=\xi_{(m,Q)}(s)\in \tM,
   \end{equation}
where recall the definition of $\xi_{(m,Q)}$ from \eqref{eq ximq}.
   \renewcommand{\tr}{\tilde{r}}
   
From now on, let $\{s_1, \dots, s_{\tr}, R^*\}$ denote a generating set of the monoid $S_P$. This set is not necessarily the Hilbert basis, which is the set of minimal generators.

Assume also that $\mut_m(s_1), \dots, \mut_m(s_{\tr}), R^*$ form a generating set of the monoid $S_{P_m}$, which we may assume by Lemma \ref{st lem}.
 We write $\bfy^{\bfk}:=\prod_{j=1}^{\tr} y_j^{k_j}$, which is a monomial of $\tM$-degree $\deg(\bfy^\bfk)=\sum_{j=1}^{\tr} k_j \mut_{m}(s_j)$.

Let 
$$
\bft_P:=\{t_m\mid m\in \cE(P)\}
$$
be the set of deformation parameters, where each $t_m = t_{(m,Q)}$ corresponds to a deformation pair $(m,Q)$ of $P$. 
Let $\cM\subset \cE(P)$ and $\bft_\cM:=\{t_r\mid r\in \cM\}$. 
\begin{definition}\label{mmutable def}
    Let $m \in \cM$. We say that $F_\bfk(\bfx,u,\bft_\cM)$ is \emph{$t_m$-mutable} if, after making the substitutions:
    \begin{itemize}
        \item $x_i$ is replaced by $y_i$,
        \item $t_r$ is replaced by $t_{\psi_m(r)}$ for $r \in \cE(P)$, $r \neq m$,
        \item $t_m$ is set to $1$,
    \end{itemize}
    the resulting expression $F_\bfk(\bfy,u,t_{\psi_m(r)} \mid r\in \cM, r\ne m)$ can be homogenized by $t_{-m}$ in such a way that every monomial has $\tM$-degree $\sum_{j=1}^{\tr}k_j \mut_{m} s_j$. That is, every monomial can be multiplied by $t_{-m}^i$ for some $i \in \NN$ to achieve the desired degree. 
    If we can homogenize $$F_\bfk(\bfy,u,t_{\psi_m(r)} \mid r\in \cM, r\ne m)$$ by $t_{-m}$, then we denote its homogenization by $$\mut_{t_m} F_\bfk \in \CC[\bfy,u][[t_{\psi_m(r)} \mid r\in \cM]]\subset \CC[\bfy,u][[\bft_{P_m}]].$$
\end{definition}

\begin{lemma}\label{lem 6464}
Let 
 $$\cM_1:=\{r\in \cM\mid \pi_M(r)\text{ and }\pi_M(m)\text{ are colinear in }M\cong\ZZ^2,~r\ne m\},$$
$$\cM_2:=\{r\in \cM\mid \pi_M(r)\text{ and }\pi_M(m)\text{ are not colinear in }M\cong\ZZ^2\}.$$
    $F_\bfk(\bfx,u,\bft_\cM)$ is $t_m$-mutable, if for every monomial 
    \begin{equation}\label{eq eqeq aup}
    a\cdot u^p\cdot t^k_{m}\prod_{r\in \cM_1}t^{p_r}_{r}\prod_{r\in \cM_2}t_r^{n_r}\prod_{j=1}^{\tr} x_j^{b_j}
    \end{equation}
    of $F_{\bfk}(\bfx,u,\bft_\cM)$, where $k,p,p_r,n_r,b_j\in \NN$ and $a\in \CC$, it holds that  
    \begin{equation}\label{eq ksum 2}
    k\leq \sum_{j=1}^{\tr}\eta_{Q}(k_j\pi_M(s_j))+\sum_{r\in \cM_2}\eta_Q(-n_r\pi_M(r))-\sum_{r\in \cM_1}p_r-\sum_{j=1}^{\tr}\eta_Q(b_j\pi_M(s_j)).
    \end{equation}
\end{lemma}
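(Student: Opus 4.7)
The plan is to verify the claim by a direct degree computation, using that every monomial of $F_\bfk(\bfx,u,\bft_\cM)$ carries a well-defined $\tM$-grading in which $\deg(x_j)=s_j$, $\deg(u)=R^*$, and $\deg(t_r)=r$ for $r\in\cM$, and that $F_\bfk$ is homogeneous of degree $s_\bfk=\sum_{j=1}^{\tr}k_j s_j$ (cf.\ the construction in \eqref{eq Fk}).

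First I will write out the homogeneity constraint for the monomial \eqref{eq eqeq aup}, which reads
\begin{equation*}
p\,R^*+k\,m+\sum_{r\in\cM_1}p_r\,r+\sum_{r\in\cM_2}n_r\,r+\sum_{j=1}^{\tr}b_j\,s_j=\sum_{j=1}^{\tr}k_j\,s_j. \tag{$\ast$}
\end{equation*}
Next I will perform the substitution of Definition~\ref{mmutable def} and compute the $\tM$-degree of the resulting monomial, using $\deg(y_j)=\mut_m(s_j)=s_j-\eta_Q(\pi_M(s_j))\,m$, together with the two cases of $\psi_m$ from \eqref{psimr} and \eqref{eq psimr}: for $r\in\cM_1$, $\psi_m(r)=r-m$, and for $r\in\cM_2$, $\psi_m(r)=r+\eta_Q(-\pi_M(r))\,m$. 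The resulting new degree is
\begin{equation*}
p\,R^*+\sum_{r\in\cM_1}p_r(r-m)+\sum_{r\in\cM_2}n_r\bigl(r+\eta_Q(-\pi_M(r))\,m\bigr)+\sum_{j=1}^{\tr}b_j\,\mut_m(s_j).
\end{equation*}

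Then I will subtract the target degree $\sum_{j=1}^{\tr}k_j\,\mut_m(s_j)=\sum_{j}k_j s_j-\sum_{j}k_j\eta_Q(\pi_M(s_j))\,m$, and substitute $(\ast)$ to eliminate $pR^*+\sum p_r r+\sum n_r r+\sum b_j s_j-\sum k_j s_j=-km$. After collecting all coefficients of $m$ (everything else cancels), the difference \emph{new degree} $-$ \emph{target degree} becomes
\begin{equation*}
\Bigl[\sum_{j=1}^{\tr}k_j\eta_Q(\pi_M(s_j))+\sum_{r\in\cM_2}n_r\eta_Q(-\pi_M(r))-k-\sum_{r\in\cM_1}p_r-\sum_{j=1}^{\tr}b_j\eta_Q(\pi_M(s_j))\Bigr]\,m.
\end{equation*}
Using positive homogeneity of $\eta_Q$ (so that $k_j\eta_Q(\pi_M(s_j))=\eta_Q(k_j\pi_M(s_j))$, and similarly for $n_r$, $b_j$, all of which are in $\NN$), the bracketed expression is exactly the right-hand side of \eqref{eq ksum 2} minus~$k$.

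Finally, the assumption \eqref{eq ksum 2} forces this integer coefficient of $m$ to be a non-negative integer $i\in\NN$, so multiplying the transformed monomial by $t_{-m}^{i}$ (which has degree $-im$) makes it homogeneous of degree $\sum_{j}k_j\mut_m(s_j)$, as required by Definition~\ref{mmutable def}. Doing this monomial by monomial produces the homogenization $\mut_{t_m}F_\bfk$ in $\CC[\bfy,u][[\bft_{P_m}]]$. I do not anticipate a real obstacle here: the only subtlety is keeping track of the sign conventions of $\eta_Q$ on $\pm\pi_M(r)$ (which distinguishes the $\cM_1$ and $\cM_2$ contributions and explains why $\cM_1$ contributes $-\sum p_r$ rather than a term involving $\eta_Q$), together with verifying that $m\notin\cM_1\cup\cM_2$ so that the $t_m^{k}$ factor is genuinely separate and gets set to $1$ in the substitution.
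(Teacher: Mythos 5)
Your proposal is correct and follows essentially the same route as the paper: a direct $\tM$-degree computation of the shifts induced by the substitutions of Definition \ref{mmutable def} (namely $\deg(\bfy^\bfb)-\deg(\bfx^\bfb)=-\sum_j\eta_Q(b_j\pi_M(s_j))\,m$, the shift $-\sum_{r\in\cM_1}p_r\,m$ for collinear parameters, and $+\sum_{r\in\cM_2}\eta_Q(-n_r\pi_M(r))\,m$ for non-collinear ones), concluding that the total defect from the target degree $\sum_j k_j\mut_m(s_j)$ is a non-negative multiple of $m$ precisely under \eqref{eq ksum 2}. Your explicit use of the homogeneity constraint $(\ast)$ only makes transparent what the paper leaves implicit, so there is nothing further to flag.
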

\begin{proof}
This follows immediately by definition. Applying the substitutions in Definition \ref{mmutable def} we see that the difference of $\tM$-degree $\deg(\bfx^\bfk)$ of $\bfx^\bfk$ and $\tM$-degree $\deg(\bfy^\bfk)$ of $\bfy^\bfk$ is equal to 
$$\deg(\bfy^\bfk)-\deg(\bfx^\bfk)=\left(-\sum_{j=1}^{\tr}\eta_Q(k_j\pi_M(s_j))\right)m,$$ 
for every $\bfk\in \NN^{\tr}$ (in particular it holds for $\bfb=(b_1,...,b_{\tr})\in \NN^{\tr}$). 
Moreover, we have
$$\deg\left( \prod_{r\in \cM_2}t^{n_r}_{\psi_m(r)} \right)-\deg\left( \prod_{r\in \cM_2}t^{n_r}_{r}\right)=\left(\sum_{r\in \cM_2}\eta_Q(-n_r\pi_M(r))\right)m$$
and 
$$\deg\left( \prod_{r\in \cM_1}t^{p_r}_{\psi_m(r)} \right)-\deg\left( \prod_{r\in \cM_1}t^{p_r}_{r}\right)=-\left(\sum_{r\in \cM_1}p_r\right)m.$$
From this the claim follows.   
\end{proof}

\begin{example}
We continue with Example \ref{ex 3.6}.
 We have $$F_\bfk(\bfx,u,t_{-m})=x_2x_6-ux_5-t_{-m}x_3,$$
 $$F_{\bfa+\bfk}=x_2x_4x_6^2-ux_5^3-t_{-m}^2x_3u-2t_{-m}x_5u^2,$$
 $$F_{\partial(\bfk)+\bfa}=x_4x_5x_6-x_5^3-t_{-m}x_5u,~~~F_{\bfk_1+\bfa}=x_3x_4x_6-u^2x_5-t_{-m}x_3u.$$

    We are going to show that $F_\bfk$, $F_{\bfa+\bfk}$, $F_{\partial(\bfk)+\bfa}$ and $F_{\bfk_1+\bfa}$
 are $t_{-m}$-mutable: replacing $x_i$ by $y_i$, with lattice degree $\deg y_i=z_i$ from Example \ref{ex3}, and setting $t_{-m}$ to $1$, we get from $F_{\bfk}$ the term $y_2y_6-uy_5-y_3.$ Homogenizing by $t_m$ gives us 
 $$\mut_{t_{-m}}F_\bfk=y_2y_6-t_muy_5-y_3=y_2y_6-y_3-t_muy_5,$$
 since $\deg(y_2)=(-1,1,0)$, $\deg(y_6)=(1,0,0)$, $\deg(y_5)=(0,-1,2)$, $\deg(y_3)=(0,1,0)$ and $\deg(t_m)=m=(0,2,-3)$. 
   In the same way, we see that  $F_{\bfa+\bfk}$, $F_{\partial(\bfk)+\bfa}$ and $F_{\bfk_1+\bfa}$ are $t_{-m}$ mutable, which gives us a relation 
   \begin{equation}\label{eq murel}
   \mut_{t_{-m}}F_{\bfa+\bfk}-\bfy^\bfa \mut_{t_{-m}}F_{\bfk}-u\mut_{t_{-m}}F_{\partial(\bfk)+\bfa}-\mut_{t_{-m}}F_{\bfk_1+\bfa}=0,
   \end{equation}
since $$\mut_{t_{-m}}F_{\bfa+\bfk}=y_2y_4y_6^2-t_m^2uy_5^3-y_3u-2t_my_5u^2,$$ 
$$\mut_{t_{-m}}F_{\partial(\bfk)+\bfa}=y_4y_5y_6-t_my_5^3-y_5u,$$
$$\mut_{t_{-m}}F_{\bfk_1+\bfa}=y_3y_4y_6-t_muy_5-y_3u.$$
Note that \eqref{eq murel} lifts a relation $f_{\bfa+\bfk}(\bfy,u)-\bfy^\bfa f_{\bfk}(\bfy,u)-f_{\partial_{P_{m}}(\bfk)+\bfa}(\bfy,u)$, where $f_{\partial_{P_{m}}(\bfk)+\bfa}(\bfy,u)=y_3y_4y_6-y_3u$, which is not a coincidence as we will see in \eqref{eq 68}. 
\end{example}

\begin{definition}\label{def unobs}
    We say that $(X_P,\partial X_P)$ is \emph{unobstructed} in 
    $$\bft_{\cM}=\left\{t_m\mid m\in \cM\subset \cE(P)\right\}$$
    if there exists a formal deformation 
     \begin{equation}\label{cxnf}
    \{F_\bfk(\bfx,u,\bft)\mid \bfk\in \NN^{\tr}\}
    \end{equation}
    of $(X_P,\partial X_P)$ over $R=\CC[[\bft_\cM]]$ such that the image of the Kodaira--Spencer map is isomorphic to $\bigoplus_{m\in \cM}T^1_{(X_P,\partial X_P)}(-m)$ (see e.g.\ \cite[Section 10]{JP00} for a definition of the Kodaira--Spencer map).   
\end{definition}

We will show that $(X_{P},\partial X_P)$ is unobstructed in $\bft_\cM$ if and only if $(X_{P_m},\partial X_{P_m})$ is unobstructed in $\{t_{\psi_m(r)}\mid r\in \cM\}\subset \bft_{P_m}$, where $m\in \cM$.

Since $\{s_1, \dots, s_{\tr}, R^*\}$ does not necessarily form a Hilbert basis, we can, for every $s \in \tM$, choose $\bfb = (b_1, \dots, b_{\tr}) \in \NN^{\tr}$ such that
\begin{equation}\label{eq mutmut}
\partial_P(s) = \sum_{j=1}^{\tr} b_j s_j \in \tM\quad\text{and} \quad\partial_{P_m}(\mut_m s) = \sum_{j=1}^{\tr} b_j \mut_m s_j.
\end{equation}

\begin{theorem}\label{th main 1}
Assume that $(X_P,\partial X_P)$ is unobstructed in $\bft_{\cM}=\{t_r\mid r\in \cM\subset \cE(P)\}$ and that $F_\bfk\in \CC[\bfx,u][[\bft_\cM]]$ from \eqref{cxnf} is $t_m$-mutable for some $m\in \cM$ and every $\bfk\in \NN^{\tr}$. Moreover, assume that the restriction of $F_\bfk(\bfx,u,\bft_\cM)$ to $F_\bfk(\bfx,u,0,\dots,0,t_r,0,\dots,0)$ coincides with \eqref{eq Fk}, for every $r\in \cM$ and $\bfk\in \NN^{\tr}$.
Then, the pair $(X_{P_m},\partial X_{P_m})$ is unobstructed in $\{t_{\psi_m(r)}\mid r\in \cM\}$.
\end{theorem}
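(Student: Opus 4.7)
The plan is to apply the mutation operator $\mut_{t_m}$ uniformly to both the deforming equations and their linear relations, and then invoke the equational flatness criterion (Lemma \ref{lem equat fla}). More precisely, I would set $G_\bfk := \mut_{t_m} F_\bfk$, which lies in $\CC[\bfy,u][[t_{\psi_m(r)} \mid r\in \cM]]$ by the $t_m$-mutability hypothesis (Definition \ref{mmutable def} combined with Lemma \ref{lem 6464}), and $R = \CC[[t_{\psi_m(r)} \mid r\in\cM]]$ is identified with a subring of $\CC[[\bft_{P_m}]]$ via the bijection $\psi_m$ from Lemma \ref{lem 61}.

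The first key verification is that $G_\bfk(\bfy,u,0) = f_{\bfk,P_m}(\bfy,u)$. By the hypothesis that $F_\bfk(\bfx,u,0,\dots,0,t_r,0,\dots,0)$ coincides with the explicit one-parameter deformation \eqref{eq Fk}, specializing to $r = m$ yields precisely formula \eqref{eq Fk} in $t_m$; applying $\mut_{t_m}$ to this (substituting $y_j$ for $x_j$, setting $t_m=1$, and homogenizing by $t_{-m}$) and then setting $t_{-m}=0$ extracts the leading term, which must be $\bfy^\bfk - \bfy^{\partial_{P_m}(\bfk)}u^{\eta_{P_m}(\bfk)}$. The identification $\partial_P(\bfk)=\partial_{P_m}(\mut_m \bfk)$ as vectors in $\NN^{\tr}$ from \eqref{eq mutmut} ensures the monomial exponents match, while the power of $u$ is forced by $\tM$-homogeneity of $f_{\bfk,P_m}$.

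Next, and this is the technical core of the argument, I would prove that mutation is compatible with linear relations: if $R_{\bfa,\bfk}(\bfx,u,\bft_\cM)$ is a linear relation among the $F_\bfk$, then applying $\mut_{t_m}$ term-by-term yields a linear relation among the $G_\bfk$. Concretely, for each summand $g_{\bfk'}(\bfx,u,\bft_\cM)\,F_{\bfk'}$ appearing in $R_{\bfa,\bfk}$, one checks that the product is $t_m$-mutable and that $\mut_{t_m}(g_{\bfk'}F_{\bfk'}) = \mut_{t_m}(g_{\bfk'})\cdot G_{\bfk'}$, up to a uniform power of $t_{-m}$ that depends only on the total $\tM$-degree of the relation $R_{\bfa,\bfk}$ and not on the individual summand. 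This multiplicativity of $\mut_{t_m}$ on homogeneous factors, together with additivity, gives the desired mutated relation $\mut_{t_m}R_{\bfa,\bfk}$, whose specialization at $\bft=0$ produces the relation $r_{\bfa,\bfk}$ for $(X_{P_m},\partial X_{P_m})$ by the same boundary/$\eta$ bookkeeping as above.

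I expect the main obstacle to be precisely this compatibility of $\mut_{t_m}$ with products and sums: Definition \ref{mmutable def} is intrinsic (it homogenizes after substitution), but to pass relations across the mutation I need a lemma stating that, on the subring of $t_m$-mutable elements of uniform $\tM$-degree, $\mut_{t_m}$ is a ring homomorphism (after accommodating the degree shifts via powers of $t_{-m}$). The $\tM$-grading, together with the inequality \eqref{eq ksum 2} from Lemma \ref{lem 6464} applied to each monomial, should make this bookkeeping mechanical. Once these two points are established, the equational flatness criterion (Lemma \ref{lem equat fla}) produces a formal deformation of $(X_{P_m},\partial X_{P_m})$ over $\CC[[t_{\psi_m(r)}\mid r\in\cM]]$. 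Finally, since $\mut_{t_m}$ sends the one-parameter deformation indexed by $r\in\cM$ to the one-parameter deformation of $(X_{P_m},\partial X_{P_m})$ indexed by $\psi_m(r)$ (again by the hypothesis on restrictions to a single parameter), the image of the Kodaira--Spencer map contains each one-dimensional space $T^1_{(X_{P_m},\partial X_{P_m})}(-\psi_m(r))$ and hence equals $\bigoplus_{r\in\cM}T^1_{(X_{P_m},\partial X_{P_m})}(-\psi_m(r))$ by Proposition \ref{t1 prop}, which is exactly the unobstructedness statement.
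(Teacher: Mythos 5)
Your proposal is correct and follows essentially the same route as the paper: apply $\mut_{t_m}$ to the equations $F_\bfk$, use the hypothesis on one-parameter restrictions together with \eqref{eq mutmut} to see that the constant term becomes $\bfy^\bfk-\bfy^{\partial_{P_m}(\bfk)}u^{\eta_{P_m}(\bfk)}$, transport the relations $R_{\bfa,\bfk}$ through the (degree-shifted, multiplicative) homogenization, invoke the equational flatness criterion, and identify the Kodaira--Spencer image degree by degree via $\psi_m$. The only point the paper treats slightly more explicitly is the last step, where it selects a specific $\bfk$ (with all relevant $c_i$ minimized at the same vertex of $Q$) so that the mutated one-parameter restriction literally coincides with the one-parameter family \eqref{eq Fk} for $P_m$ in degree $\psi_m(r)$, but this is a refinement of the same argument you give.
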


\begin{proof}
Restricting $F_\bfk$ to $t_m\in \bft_\cM$, we obtain, under our assumption, that
$$
F_{\bfk}(\bfx,u,0,\dots,0,t_m,0,\dots,0) = \bfx^\bfk - \sum_{i=0}^{\eta_Q(\bfk)} {\eta_Q(\bfk) \choose i} t_m^i \bfx^{\partial_P(s_\bfk - im)} u^{n_P(s_\bfk - im)}.
$$
By \eqref{eq mutmut} 
we see that
\begin{multline} \label{eq 68}
\mut_{t_m} \big( F_{\bfk}(\bfx,u,0,\dots,0,t_m,0,\dots,0) \big)=\\
=\bfy^{\bfk} - \sum_{i=0}^{\eta_Q(\bfk)} {\eta_Q(\bfk) \choose i} t_{-m}^{\eta_Q(\bfk)-i} \bfy^{\partial_P(s_\bfk - im)} u^{n_P(s_\bfk - im)}.
\end{multline}
Note that \eqref{eq 68} provides a lift of
$$
\tilde{f}_\bfk := \bfy^\bfk - \bfy^{\partial_{P_m}(\bfk)} u^{\eta_{P_m}(\bfk)},
$$
since for $i = \eta_Q(\bfk)$ in the above sum, we obtain
$$
\bfy^{\partial_P(s_\bfk - \eta_Q(\bfk)m)} u^{n_P(s_\bfk - \eta_Q(\bfk)m)}
= \bfy^{\partial_{P_m}(\bfk)} u^{\eta_{P_m}(\bfk)}.
$$

Moreover, since we have a formal deformation over $\CC[[\bft_\cM]]$, we know that there exists $$o_{\bfa,\bfk}(\bfx,u,\bft_\cM)\in \CC[\bfx,u][[\bft_\cM]],$$ such that 
$$
R_{\bfa,\bfk}:=F_{\bfa+\bfk}-\bfx^\bfa F_{\bfk}-o_{\bfa,\bfk}=0
$$
and that $R_{\bfa,\bfk}$ are lifts of $r_{\bfa,\bfk}$ (see Proposition \ref{acf1}, where $r_{\bfa,\bfk}$ was introduced). 
Thus,
\begin{equation}\label{mut fmut}
\mut_{t_m}F_{\bfa+\bfk}=\bfy^\bfa \mut_{t_m}F_{\bfk}+\tilde{o}_{\bfa,\bfk},
\end{equation}
for some $\tilde{o}_{\bfa,\bfk} \in \CC[\bfx,u][[t_{\psi_m(r)}\mid r\in \cM]]$, since $F_{\bfa+\bfk}$ is $t_m$-mutable. By \eqref{eq 68}, we see that 
$$
\widetilde{R}_{\bfa,\bfk}:=\mut_{t_m}F_{\bfa+\bfk}-\bfy^\bfa \mut_{t_m}F_{\bfk}+\tilde{o}_{\bfa,\bfk}
$$
is a lift of 
$$
\tilde{r}_{\bfa,\bfk}:=\tilde{f}_{\bfa+\bfk}-\bfx^\bfa \tilde{f}_{\bfk}-u^{\eta_{P_m}(\bfk)}\tilde{f}_{\bfa+\partial(\bfk)}.
$$

Thus, $\{\mut_{t_m}F_{\bfk}\mid \bfk\in \NN^{\tr}\}$ is a formal deformation of $(X_{P_m},\partial X_{P_m})$ over $\CC[[t_{\psi_m(r)}\mid r\in \cM]]$. We now show that the image of its Kodaira--Spencer map is isomorphic to
$$
\bigoplus_{r\in \cM} T^1_{(X_{P_m},\partial X_{P_m})}(-\psi_m(r)).
$$
Indeed, for each $r\in \cM$, the Kodaira--Spencer class of the one-parameter deformation
$$
F_{\bfk}(\bfx, u, 0, \dots, 0, t_{\psi_m(r)}, 0, \dots, 0)
$$
 of $X_{P_m}$, spans  $T^1_{(X_{P_m},\partial X_{P_m})}(-\psi_m(r))\subset T^1_{(X_{P_m},\partial X_{P_m})}$. 
 Let $\bfk$ be such that for all $i \in \{1, \dots, \tr\}$ with $k_i c_i \ne 0$, the function $c_i$  achieves its minimal value at the same vertex of the line segment $$Q\subset (\pi_M(m)=0) \text{, such that $(m,Q)$ is a deformation pair of $P$}.$$ Then
$$
\mut_{t_m}\left(F_{\bfk}(\bfx, u, 0, \dots, 0, t_r, 0, \dots, 0)\right) = F_\bfk(\bfy, u, 0, \dots, 0, t_{\psi_m(r)}, 0, \dots, 0).
$$
Note that since \eqref{eq mutmut} holds, it follows that there exists at least one such $\bfk$ for which the deformation parameter $t_r$ appears in the one-parameter deformation $F_{\bfk}(\bfx, u, 0, \dots, 0, t_r, 0, \dots, 0)$. This completes the proof of the claim.
\end{proof}

In the following theorem we will need the following constructions.

\begin{construction}\label{cons 1}
Assume there exists a formal deformation
$$
\left\{ F_{\bfk}(\bfx, u, \bft_{\cS}) \mid \bfk \in \mathbb{N}^{\tr} \right\}
$$
of $(X_P, \partial X_P)$ over $\mathbb{C}[[\bft_{\cS}]]$, where $\bft_{\cS}:=\{t_r\mid r\in \cS\subset \cE(P)\}$. We are going to construct a formal deformation $\{\widetilde{F}_\bfb(\bfx, u, \bft_{\cS}) \mid \bfb \in \NN^{\tr}\}$; see \eqref{eq fb} for the definition of $\widetilde{F}_\bfb$, and \eqref{eq rcrt} for the description of the corresponding linear relations.

There exist elements $o_{\bfa, \bfk}(\bfx, u, \bft_{\cS})$ satisfying
\[
o_{\bfa, \bfk}(\bfx, u, 0) = u^{\eta_P(\bfk)} F_{\bfa + \partial(\bfk)}(\bfx, u,0)=u^{\eta_P(\bfk)} f_{\bfa + \partial(\bfk)}(\bfx, u),
\]
and such that the
\[
R_{\bfa, \bfk} := F_{\bfa + \bfk} - \bfx^\bfa F_{\bfk} - o_{\bfa, \bfk}
\]
is a linear relation.
Consider any term
$$
\tilde{a} \bfx^{\tilde{\bfi}}u^{\tilde{p}} \bft_{\cS}^{\tilde{\bfj}} F_{\bfb}
$$
appearing in $R_{\bfa, \bfk}$, where $\tilde{a} \in \mathbb{C}\setminus \{0\}$, $\tilde{p}\in \NN$, $\tilde{\bfi}, \bfb \in \mathbb{N}^{\tr}$, and $\tilde{\bfj} \in \mathbb{N}^{|\cS|}$. Within $F_{\bfb}$, consider any term
\begin{equation}\label{term 1 eq revised}
a \bfx^\bfi u^p\bft^\bfj_\cS,
\end{equation}
where $a \in \mathbb{C}\setminus \{0\}$, $p\in \NN$, $\bfi \in \mathbb{N}^{\tr}$, and $\bfj \in \mathbb{N}^{|\cS|}$. Assume $\bfj \neq 0$ (that is, at least one deformation parameter appears) and that $F_\bfi\ne 0$. We replace \eqref{term 1 eq revised} by
$$
a (-F_{\bfi} + \bfx^{\bfi}) u^p\bft^\bfj_\cS,
$$
and add the term $a \tilde{a} \bfx^{\tilde{\bfi}} u^{p+\tilde{p}} \bft_{\cS}^{\bfj+\tilde{\bfj}} F_{\bfi}$ to $R_{\bfa, \bfk}$ to ensure that, even after these replacements, we still have a linear relation.

Let $m_R$ denote the maximal ideal of $R=\mathbb{C}[[\bft_{\cS}]]$. Repeating this procedure, we construct (for any $n \in \mathbb{N}$) linear relations
$$
\widetilde{R}_{\bfa, \bfk} := \widetilde{F}_{\bfa + \bfk} - \bfx^\bfa \widetilde{F}_{\bfk} - \tilde{o}_{\bfa, \bfk},$$
where each $\widetilde{F}_{\bfb}$ appearing in $\widetilde{R}_{\bfa, \bfk}$, modulo $m_R^n$, contains only terms of the form $a'u^{p'} \bfx^{\bfi'} \bft^{\bfj'}$, where $a' \in \mathbb{C}\setminus \{0\}$, $p'\in \NN$, $\bfi' \in \mathbb{N}^{\tr}$, $\bfj' \in \mathbb{N}^{|\cS|}$, and $F_{\bfi'}=0$.
 Moreover,
\[
\tilde{o}_{\bfa, \bfk}(\bfx, u, 0) = u^{\eta_P(\bfk)} \widetilde{F}_{\bfa + \partial(\bfk)}(\bfx, u, 0)=u^{\eta_P(\bfk)}f_{\bfa + \partial(\bfk)}(\bfx, u),
\]
and 
$\tilde{o}_{\bfa, \bfk}(\bfx, u, \bft_{\cS})$ contains terms of the form
\begin{equation}\label{eq upam}
a \bfx^\bfi u^p \bft_{\cS}^\bfj \widetilde{F}_{\bfb}(\bfx, u, \bft_{\cS}),
\end{equation}
where $a \in \mathbb{C}\setminus \{0\}$, $p\in \NN$, $\bfi, \bfb \in \mathbb{N}^{\tr}$, and $\bfj \in \mathbb{N}^{|\cS|}$. We replace $\bfx^\bfi \widetilde{F}_{\bfb}(\bfx, u, \bft_{\cS})$ by $\widetilde{F}_{\bfi + \bfb}(\bfx, u, \bft_{\cS}) - \tilde{o}_{\bfi, \bfb}(\bfx, u, \bft_{\cS})$, whenever $\bfi \neq 0$, noting that these two expressions are indeed equal.

Repeating this procedure, we obtain, for every $n \in \mathbb{N}$, a linear relation
\[
R'_{\bfa, \bfk}(\bfx, u, \bft_{\cS}) := \widetilde{F}_{\bfa + \bfk}(\bfx, u, \bft_{\cS}) - \bfx^\bfa \widetilde{F}_{\bfk}(\bfx, u, \bft_{\cS}) - o'_{\bfa, \bfk}(\bfx, u, \bft_{\cS}),
\]
where $o'_{\bfa, \bfk}(\bfx, u, \bft_{\cS})$ contains, modulo $m_R^n$, terms of the form
\begin{equation}\label{eq upam 2}
b u^k\bft_{\cS}^\bfn \widetilde{F}_{\bfb}(\bfx, u, \bft_{\cS}),
\end{equation}
with $b \in \mathbb{C}$, $k\in \NN$, $\bfb\in \mathbb{N}^{\tr}$ and $ \bfn \in \NN^{|\cS|}$, and where $\widetilde{F}_{\bfb}(\bfx, u, \bft_{\cS})$ satisfies the same conditions as before.
In particular, modulo $m_R^n$, for each $\bfb \in \mathbb{N}^{\tr}$, we have
\begin{equation}\label{eq fb}
\widetilde{F}_{\bfb}(\bfx, u, \bft_{\cS}) = \bfx^\bfb - \sum_{\bfj \in J_{\bfb} \subset \mathbb{N}^{|\cS|}} n_{\bfj}(\bfb) \bft_{\cS}^{\bfj} \chi^{s_{\bfb} - \deg(\bft_{\cS}^{\bfj})},
\end{equation}
where
$$
\bft_{\cS}^{\bfj} := \prod_{r \in \cS} t_r^{j_r},
$$
and $n_{\bfj}(\bfb) \in \mathbb{C}$ with $n_0(\bfb) = 1$, so that $\widetilde{F}_{\bfb}(\bfx, u, 0) = f_{\bfb}(\bfx, u)$. 
Additionally, modulo $m_R^n$,  we have
\begin{equation}\label{eq rcrt}
R'_{\bfa, \bfk}(\bfx, u, \bft_{\cS}) = \widetilde{F}_{\bfa + \bfk}(\bfx, u, \bft_{\cS}) - \bfx^\bfa \widetilde{F}_{\bfk}(\bfx, u, \bft_{\cS}) - o'_{\bfa, \bfk}(\bfx, u, \bft_{\cS}),
\end{equation}
where
$$
o'_{\bfa, \bfk}(\bfx, u, \bft_{\cS}) = \sum_{\bfj \in J_{\bfk}} n_{\bfj}(\bfk) \bft_{\cS}^\bfj u^{n_P(s_\bfk-\deg(\bft^\bfj_\cS))}\widetilde{F}_{\bfa + \partial(s_{\bfk} - \deg(\bft_{\cS}^\bfj))}(\bfx, u, \bft_{\cS}),
$$
with $\partial(s_{\bfk} - \deg(\bft_{\cS}^\bfj)) \in \mathbb{N}^{\tr}$. 
In particular, we have $o'_{\bfa, \bfk}(\bfx,u, 0) = f_{\bfa + \partial(\bfk)}(\bfx,u)$, which implies that $R'_{\bfa, \bfk}(\bfx, u, 0) = r_{\bfa, \bfk}(\bfx,u)$.
\end{construction}

\begin{construction}\label{cons 2}
Let $\{s_1, \dots, s_{\tr},R^*\}$ be a generating set of $S_P$ satisfying \eqref{eq mutmut} and let $G\subset P$ be the lattice point $0\in N$. The generating set of $S_G$ is $$\{\tilde{s}_1, \dots, \tilde{s}_{\tr},R^*\},$$
where $\tilde{s}_i=(\pi_M(s_i),0)$. Let $z_i$ be the corresponding variables. 
We are going to deform 
$X_G:=\spec \CC[\bfz]/(f_{\bfk}(\bfz)\mid \bfk \in \NN^{\tilde{r}})$, where $$f_\bfk(\bfz):=\bfz^\bfk-\bfz^{\partial_P(\bfk)},$$
with the set of deformation parameters $\bft_\cM:=\{t_r\mid r\in \cM\subset M\}$. For every $r\in M$ let  $Q_r\subset (r=0)$ be a line segment of lattice length $1$.

Fix $m \in \cM$ and write $Q := Q_m$. We define
$$\cM_1:=\{r\in \cM\mid \text{$r$ and $m$ are colinear}\},$$
$$\cM_2:=\{r\in \cM\mid \text{$r$ and $m$ are not colinear}\}.$$ 
By Lemma \ref{lem bfk} we easily see that the linear relations among
    \begin{equation}\label{eq fbfk2}
    F_\bfk(\bfz,\bft_\cM):=\bfz^\bfk-\bfz^{\partial_P(\bfk)}\left(1+\sum_{r\in \cM_1}\chi^{-r}t_{r}\right)^{\eta_{Q}(\bfk)}\prod_{r\in \cM_2}(1+\chi^{-r}t_{r})^{\eta_{Q_r}(\bfk)}
    \end{equation}
    are
    $$R_{\bfa,\bfk}(\bfz,\bft_\cM):=F_{\bfa+\bfk}-\bfx^\bfa F_\bfk-\left(1+\sum_{r\in \cM_1}\chi^{-r}t_{r}\right)^{\eta_{Q}(\bfk)}\prod_{r\in \cM_2}(1+\chi^{-r}t_{r})^{\eta_{Q_r}(\bfk)}F_{\bfa+\partial_P(\bfk)}.$$
  
Thus, we have constructed a deformation $\{F_\bfk(\bfz, \bft_\cM)\mid \bfk \in \NN^{\tr}\}$ of $(X_G,\partial X_G)$. Note that the polytopes $Q_r$ are translation equivalent to $Q$ for all $r \in \cM_1$. By applying Construction~\ref{cons 1}, we obtain a formal deformation of $(X_G,\partial X_G)$ which is $t_m$-mutable (by Lemma \ref{lem 6464}) and such that the restriction to $\CC[[t_r]]$ induces the one-parameter deformation from Corollary~\ref{cor 3.3}, for all $r \in \cM$.
\end{construction}

\begin{example}
Let $$\{(-1,0,0),(0,1,0),(1,0,0),(0,-1,0),(1,-1,0),(1,-2,0),R^*\}$$ be a generating set of $S_G$, where $G:=0\in N\cong \ZZ^2$.
  Let $\chi^{(i,j)}$ correspond to an element $(i,j,0)$ in the generating set and let $\chi^{(0,0)}=1$. Let $t$ and $s$ be deformation parameters with $\deg(t)=(1,0,0)$ and $\deg(s)=(0,-1,0)$. 
    Let $$f_{(-1,0),(1,-2)}=\chi^{(-1,0)}\chi^{(1,-2)}-\left(\chi^{(0,-1)}\right)^2,$$
    $$f_{(0,1),(0,-1),(0,-1)}=\chi^{(0,1)}\left(\chi^{(0,-1)}\right)^2-\chi^{(0,-1)},$$
    $$f_{(-1,0),(0,1),(1,-2)}=\chi^{(-1,0)}\chi^{(0,1)}\chi^{(1,-2)}-\chi^{(0,-1)},$$
    $$f_{(-1,0),(1,-1)}=\chi^{(-1,0)}\chi^{(1,-1)}-\chi^{(0,-1)},$$
    $$f_{(0,1),(0,-1)}=\chi^{(0,1)}\chi^{(0,-1)}-\chi^{(0,0)},~~~f_{(-1,0),(0,-1)}=0.$$
    The deformation described in \eqref{eq fbfk2} is in this case equal to
    $$F_{(-1,0),(1,-2)}=f_{(-1,0),(1,-2)}-s\chi^{(0,1)}\left(\chi^{(0,-1)}\right)^2$$
     and since 
     $$
     F_{(0,1),(0,-1),(0,-1)}=f_{(0,1),(0,-1),(0,-1)}-t\chi^{(-1,0)}\chi^{(0,-1)},
     $$
     we see that the deformation described in \eqref{eq fb} is in our case equal to 
     \begin{equation}\label{eq wtilf-1}
     \widetilde{F}_{(-1,0),(1,-2)}=f_{(-1,0),(1,-2)}-s\chi^{(0,-1)}-st \chi^{(-1,0)}\chi^{(0,-1)}.
     \end{equation}
Similarly, we can see that 
$$F_{(-1,0),(0,1),(1,-2)}=$$
$$=f_{(-1,0),(0,1),(1,-2)}-t\chi^{(-1,0)}\chi^{(0,-1)}-s\chi^{(0,1)}\chi^{(0,-1)}-st\chi^{(-1,0)}\chi^{(0,1)}\chi^{(0,-1)},$$

and thus 
\begin{equation}\label{eq wtilf-2}
\widetilde{F}_{(-1,0),(0,1),(1,-2)}=f_{(-1,0),(0,1),(1,-2)}-t\chi^{(-1,0)}\chi^{(0,-1)}-s-2ts\chi^{(-1,0)}-t^2s\left(\chi^{(-1,0)}\right)^2.
\end{equation}
    We can lift the relation 
     $$f_{(-1,0),(0,1),(1,-2)}-\chi^{(0,1)}f_{(-1,0),(1,-2)}-f_{(0,1),(0,-1),(0,-1)}$$
     by 
     $$\widetilde{F}_{(-1,0),(0,1),(1,-2)}-\chi^{(0,1)}\widetilde{F}_{(-1,0),(1,-2)}-\widetilde{F}_{(0,1),(0,-1),(0,-1)}-s\widetilde{F}_{(0,1),(0,-1)}-st\widetilde{F}_{(0,1),(-1,0),(0,-1)},$$
     which is a relation described in \eqref{eq rcrt}.
     This is indeed a linear relation by \eqref{eq wtilf-1}, \eqref{eq wtilf-2}, and since 
     $$\widetilde{F}_{(0,1),(0,-1)}=f_{(0,1),(0,-1)}-t\chi^{(-1,0)},$$
     and
     $$\widetilde{F}_{(0,1),(0,-1),(0,-1)}=f_{(0,1),(0,-1),(0,-1)}-t\chi^{(0,-1)}\chi^{(-1,0)}.$$
\end{example}

\begin{theorem}\label{th main 2}
    If $X_{P}$ is unobstructed in $\bft_\cM=\{t_r~|~r\in \cM\subset \cE(P)\}$, then, for every $m\in \cM$, there exists a formal deformation 
    $\{F_\bfk(\bfx,u,\bft_\cM)~|~\bfk\in \NN^{\tr}\}$ of $(X_P,\partial X_P)$ over $\CC[[\bft_\cM]]$, such that $F_\bfk(\bfx,u,\bft_\cM)$ is $t_m$-mutable and that the restriction of $F_\bfk(\bfx,u,\bft_\cM)$ to $F_\bfk(\bfx,u,0,\dots,0,t_r,0,\dots,0)$ coincides with \eqref{eq Fk}, for every $r\in \cM$ and $\bfk\in \NN^{\tr}$.
\end{theorem}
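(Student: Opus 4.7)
The plan is to promote any formal deformation guaranteed by unobstructedness to one with the two additional properties, by exploiting the natural $\tM$-grading on the deformation theory and the fact that each nonzero graded component of $T^1_{\paar}$ is one-dimensional (Proposition \ref{t1 prop}).

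Step 1 (Standard form and grading). By unobstructedness, I would fix any formal deformation of $(X_P, \partial X_P)$ over $\CC[[\bft_\cM]]$ and apply Construction \ref{cons 1} to put it in the form \eqref{eq fb}. Since the torus $\spec \CC[\tM]$ acts compatibly on $X_P$ and on $\partial X_P$, the deformation functor $F_\paar$ is torus-equivariant, so both $T^1_\paar$ and the obstruction module inherit an $\tM$-grading; declaring $\deg(t_r) = r$ extends this grading to $\CC[[\bft_\cM]]$. A standard averaging argument for torus-equivariant deformation functors then allows one to assume that each $F_\bfk$ is $\tM$-homogeneous of degree $s_\bfk$.

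Step 2 (Matching restrictions). The restriction $F_\bfk(\bfx, u, 0, \dots, 0, t_r, 0, \dots, 0)$ is a one-parameter deformation of $(X_P, \partial X_P)$ whose Kodaira--Spencer class lies in $T^1_\paar(-r)$, a space of dimension at most one. After rescaling $t_r$, the first-order term matches that of \eqref{eq Fk}. Higher orders are handled by induction: assuming agreement modulo $t_r^k$, the next-order discrepancy is a deformation class in $T^1_\paar(-kr)$, and a parameter change $t_r \mapsto t_r + c_k t_r^{k+1}$ with suitable $c_k$ eliminates it without affecting the lower-order matching.

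Step 3 ($t_m$-mutability from grading). With the deformation $\tM$-homogeneous and in standard form, every monomial $a \cdot u^p \cdot t_m^k \prod_{r \ne m} t_r^{n_r} \prod_j x_j^{b_j}$ of $F_\bfk$ satisfies the identity $\sum_j b_j s_j + p R^* + k m + \sum_r n_r r = s_\bfk$ in $\tM$. Projecting onto $M$, applying $\eta_Q$, and using subadditivity (Lemma \ref{lem bfk}) together with $\pi_M(R^*) = 0$ and $\eta_Q(\pi_M(m)) = 0$ (as $Q \subset (\pi_M(m) = 0)$), yields precisely the inequality \eqref{eq ksum 2}. Lemma \ref{lem 6464} then gives $t_m$-mutability.

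The main obstacle will be that the reparametrizations in Step 2 may interact with the substitutions of Construction \ref{cons 1} in ways that disturb either the standard form \eqref{eq fb} or the $\tM$-homogeneity, especially through cross-terms involving the other parameters $t_s$. Resolving this requires a combined induction on the $m_R$-adic order and on the $\tM$-grading, verifying at each stage that the needed correction lies in the graded component of $T^1_\paar$ determined by that stage. Since each such component is at most one-dimensional, the correction is a single scalar, and the procedure terminates consistently.
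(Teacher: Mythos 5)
Your Steps 1 and 2 are reasonable and roughly parallel what the paper does at the outset (obtaining, via unobstructedness, the implicit function theorem and Corollary \ref{cor 3.3}, a formal deformation whose one-parameter restrictions coincide with \eqref{eq Fk}, and then normalizing via Construction \ref{cons 1}). The gap is in Step 3, which is where the actual content of the theorem lies. You claim that $\tM$-homogeneity of the $F_\bfk$, after projecting to $M$ and applying $\eta_Q$ with subadditivity, yields the inequality \eqref{eq ksum 2}. It does not. Homogeneity gives the single relation $\sum_j b_j s_j + pR^* + km + \sum_{r\in\cM_1}p_r r + \sum_{r\in\cM_2}n_r r = s_\bfk$ in $\tM$; applying $\pi_M$ and then $\eta_Q$ kills exactly the terms you need to control, because $\eta_Q(\pm\pi_M(m))=0$ (as $Q\subset(\pi_M(m)=0)$) and likewise $\eta_Q(\pi_M(r))=0$ for $r\in\cM_1$. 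Hence the exponent $k$ of $t_m$, which \eqref{eq ksum 2} bounds from above with coefficient $1$, and the term $-\sum_{r\in\cM_1}p_r$, simply do not appear in any inequality obtainable this way. Concretely, two monomials of the same $\tM$-degree can have different powers of $t_m$ (e.g.\ by trading $t_m$ against parameters $t_{r}$ with $r\in\cM_1$ and powers of $u$), and homogeneity cannot distinguish them, whereas \eqref{eq ksum 2} does. Note also that every deformation appearing in the paper is already $\tM$-homogeneous, so if homogeneity implied $t_m$-mutability the theorem would be nearly vacuous.

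This is precisely the difficulty the paper's proof is built to overcome: an arbitrary unobstructed deformation need not be $t_m$-mutable, and one must select the correct representative in its formal isomorphism class. The paper does this by degenerating to the toric variety $X_G$ of the lattice point $G=0\in N$ (via the substitutions $x_j=u^{n_j}z_j$, $T_r=t_r/u^{p_r}$), constructing in Construction \ref{cons 2} an explicit deformation of $(X_G,\partial X_G)$ that is $t_m$-mutable by Lemma \ref{lem 6464}, and then proving that the formal isomorphism $\Phi$ comparing the two deformations of $(X_G,\partial X_G)$ cannot contain monomials violating conditions (i) and (ii) --- a lattice-degree argument that is the technical heart of the proof. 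Your combined induction "on the $m_R$-adic order and the $\tM$-grading" would have to reproduce something like this control on $\Phi$; one-dimensionality of the graded pieces of $T^1_{\paar}$ fixes the deformation class in each degree but not the choice of polynomial representative, and it is the representative that \eqref{eq ksum 2} constrains. As written, the proposal does not prove the $t_m$-mutability assertion.
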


\begin{proof}
    Since $X_{P}$ is unobstructed in $\bft_\cM=\{t_m~|~m\in \cM\subset \cE(P)\}$, we have a formal deformation 
    $\{F_{\bfk}(\bfx,u,\bft_\cM)\mid \bfk\in \NN^{\tr}\}$ of $(X_P,\partial X_P)$ over $\CC[[\bft_\cM]]$, such that the restriction of $F_\bfk$ to a single deformation parameter $t_m$ (by setting all others to $0$) coincides with \eqref{eq Fk}, for every $m\in \cM$ and $\bfk\in \NN^{\tr}$. This argument is standard in formal deformation theory, using Corollary~\ref{cor 3.3} and the implicit function theorem for formal power series, since the image of the Kodaira--Spencer map of the one-parameter deformation family from Corollary~\ref{cor 3.3} (for $t=t_m$, $m\in \cM$) equals $T^1_{X_P}(-\deg(t_m)) = T^1_{X_P}(-m)$ (see, e.g., \cite[Section~10]{JP00}).

Without loss of generality, we may assume that $\cM$ consists only of those elements $r \in \cE(P)$ such that $R^* + r \notin \cM$ (at the end, we can replace $t_r$ by a formal expression $t_r + u t_{r - u} + u^2 t_{r - 2u} + \cdots$ in order to obtain the desired deformation). Fix $m \in \cM \subset \cE(P)$, and let $G \subset P$ be the lattice point $0\in N$.


Let 
$$
\{s_1 = (c_1, \eta_P(c_1)), \dots, s_{\tr} = (c_{\tr}, \eta_P(c_{\tr})), R^*\}
$$
be a generating set for $S_P$ that satisfies \eqref{eq mutmut}, and consider the following generating set for $S_G$:
$$
\{\tilde{s}_1 = (c_1, 0), \dots, \tilde{s}_{\tr} = (c_{\tr}, 0),R^*\}.
$$
We will show that the deformation of $(X_P, \partial X_P)$ given by $F_\bfk(\bfx, u, \bft_\cM)$, whose restriction to each deformation parameter coincides with \eqref{eq Fk}, induces a deformation of $(X_G, \partial X_G)$ that is formally isomorphic to the deformation constructed in Construction \ref{cons 2}. Comparing these two deformations will prove the claim.

In $F_{\bfk}(\bfx,u,\bft_\cM)$, we insert $x_j = u^{n_j} z_j$, where $n_j := \eta_P(c_j)$, and denote it by $F_{\bfk}(\bfz,u,\bft_\cM)$. After this insertion, 
$$
F_{\bfk}(\bfx,u,0) = f_{\bfk}(\bfx,u) = \bfx^\bfk -  \bfx^{\partial_P(\bfk)}u^{\eta_P(\bfk)}
$$
becomes
$$
u^{\sum_{j=1}^{\tr} k_j n_j} \tilde{f}_{\bfk}(\bfz) := u^{\sum_{j=1}^{\tr} k_j n_j} \left( \bfz^\bfk - \bfz^{\partial_P(\bfk)}  \right).
$$

Let $n_{\bfk} := \sum_{j=1}^{\tr} k_j n_j$. For each $r \in \cM$, define
$$
T_r := \frac{t_r}{u^{p_r}},
$$
where $p_r \in \mathbb{Z}$ is chosen such that $\deg(T_r)$ takes value $0$ on $G$. This means that $\deg(T_r)=(\pi_M(\deg(t_r)),0)$ Thus, 
    $$
    \widetilde{F}_{\bfk}(\bfz, \bfTT_\cM) := \frac{F_{\bfk}(\bfz, u, \bft_\cM)}{u^{n_{\bfk}}} = \tilde{f}_{\bfk}(\bfz) - o(\bfz, \bfTT_\cM)
    $$
    induces a formal deformation of $(X_G, \partial X_G)$, where $o(\bfz, \bfTT_\cM) \in \mathbb{C}[\bfz][[\bfTT_\cM]]$, and
    $$
    \bfTT_{\cM} := \{T_r \mid r \in \cM\}.
    $$
Indeed, $\widetilde{F}_\bfk(\bfz,\bfTT_\cM)$ is obtained from $F_\bfk(\bfx,u,\bft_\cM)$ by setting $u = 1$, $x_i = z_i$, and $t_r = T_r$, for every $i=1,\dots,\tilde{r}$ and $r\in \cM$.

In Construction \ref{cons 2} we constructed a formal deformation $$\{F'_\bfk(\bfz,T_r) \mid r \in \cM, \bfk\in \NN^{\tr}\}$$ of $(X_G,\partial X_G)$ over $\mathbb{C}[[\bfTT_\cM]]=\mathbb{C}[[T_r \mid r \in \cM]],$ such that the restriction to $\mathbb{C}[[T_r]]$ is the one-parameter deformation family of $(X_G,\partial X_G)$ from Corollary \ref{cor 3.3}.

There exists a formal isomorphism $\Phi$ between this formal deformation and the one given by $$\{\widetilde{F}_\bfk(\bfz, \bfTT_\cM) \mid \bfk \in \NN^{\tr}\},$$ such that $\Phi$ is the identity modulo $(\bfTT_\cM)^2$, where $(\bfTT_\cM)$ denotes the maximal ideal of $\CC[[\bfTT_\cM]]$. This holds because the two formal deformations agree to first order.

The map $\Phi$ acts on coordinates as
$$
\Phi(z_i) = z_i + p_i(\bfz, \bfTT_\cM),
$$
where $p_i \in \CC[\bfz][[\bfTT_\cM]]$, and $p_i = 0 \mod (\bfTT_\cM)^2$ for all $i = 1, \dots, \tr$. 

We observe that $p_i$ cannot contain a monomial of the form
$$
V \;=\; a \cdot T_m^k \cdot \prod_{r \in \cM_1} T_r^{p_r}\prod_{r \in \cM_2} T_r^{n_r} \prod_{j=1}^{\tr} z_j^{b_j},
$$
where $a \in \CC\setminus\{0\}$ and $k,p_r,n_r, b_j \in \NN$, such that the following two conditions are satisfied:
\begin{itemize}
\item[(i)] there does not exist $s \in S_P$ such that $\deg(\tilde{V}) + s = s_i$,
where
$$
\tilde V = a \cdot t_m^k \cdot \prod_{r \in \cM_1} t_r^{p_r} \prod_{r \in \cM_2} t_r^{n_r} \prod_{j=1}^{\tr} x_j^{b_j},
$$
\item[(ii)] 
 $$k>\eta_Q(\pi_M(s_i))+\sum_{r\in \cM_2}\eta_Q(-n_r\pi_M(r))-\sum_{r\in \cM_1}p_r-\sum_{j=1}^{\tr}\eta_Q(b_j\pi_M(s_j)).$$
\end{itemize}

Indeed, suppose that such a monomial $V$ occurs in some $p_i$. Among all pairs $(V,p_i)$ with this property, choose one such that 
\[
k+\sum_{r \in \cM_1} p_r + \sum_{r\in \cM_2} n_r
\]
is minimal. Then one can choose a generating set of $S_P$ together with $\bfk \in \NN^{\tr}$ and $n \in \NN$ such that 
$$
\tilde f_\bfk \;=\; z_i^n z_j \;-\; \bfz^{\partial_P(\bfk)} \;\neq\; 0,
$$
and with the property that the degree $s_j$ (corresponding to $x_j$ in the generating set of $S_P$) takes the value $0$ at $(v,1)\in \tN$, where $v$ is a vertex of $P$ for which
$$
\langle \deg(\tilde V), (v,1)\rangle > \langle s_i, (v,1)\rangle.
$$
Note that such $v\in P$ exists by (i).
In this situation, we can easily show that there exists $n \in \NN$ sufficiently large (and $\bfk \in \NN^{\tr}$) such that 
$$
\Phi(F'_\bfk)\not\in \big(\widetilde{F}_\bfk(\bfz,\bfTT_\cM) \,\big|\, \bfk\in \NN^{\tr}\big).
$$ 
Note that condition (ii) is needed to ensure that the monomial $V^nz_j$ of $\Phi(\tilde{f}_\bfk)$ does not cancel with $\Phi(V')$, where $V'$ is another monomial of $F'_\bfk$ involving some deformation parameters. The largeness of $n$ guarantees the claim, since otherwise, i.e.\ if 
$$
\Phi(F'_\bfk)\in \big(\widetilde{F}_\bfk(\bfz,\bfTT_\cM)\,\big|\, \bfk\in \NN^{\tr}\big),
$$ 
we would obtain a contradiction by lattice degree considerations and by the construction of $\widetilde{F}_\bfa(\bfz,\bfTT_\cM)$ for $\bfa\in \NN^{\tr}$.

Since every $F'_\bfk$ is $T_m$-mutable, it follows that our initial deformation of $$(X_P, \partial X_P)$$ is formally isomorphic to a deformation of $(X_P, \partial X_P)$ that is $t_m$-mutable and whose restriction to the one-parameter deformation over $\CC[[t_r]]$ coincides with the one constructed in Corollary \ref{cor 3.3}, for every $r\in \cM$. 
\end{proof}

As a corollary we get the following theorem.
\begin{theorem}\label{main th 1 un}
    $X_P$ is unobstructed in $\bft_\cM=\{t_m\mid m\in \cM\subset \cE(P)\}$ if and only if  $X_{P_m}$ is unobstructed in $\bft_{\psi_m(\cM)}=\{t_{\psi_m(r)}\mid r\in \cM\}$. Moreover, the general fibre of the unobstructed deformation of $X_P$ over $\CC[[\bft_\cM]]$ is isomorphic to the general fibre of the unobstructed deformation of $X_{P_m}$ over $\CC[[\bft_{\psi_m(\cM)}]]$. 
\end{theorem}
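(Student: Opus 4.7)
The plan is to deduce the theorem directly by combining Theorems \ref{th main 2} and \ref{th main 1}, and then to analyse the mutation substitution of Definition \ref{mmutable def} in order to identify general fibres. I would carry out the three parts in sequence as follows.

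For the forward implication, suppose $X_P$ is unobstructed in $\bft_\cM$. First I would invoke Theorem \ref{th main 2} to upgrade the given formal deformation to one whose defining equations $\{F_\bfk(\bfx,u,\bft_\cM)\mid \bfk\in \NN^{\tr}\}$ are $t_m$-mutable and whose single-parameter restrictions coincide with the explicit one-parameter families \eqref{eq Fk} from Section \ref{sec 3}. Then Theorem \ref{th main 1} applies verbatim to the family $\{F_\bfk\}$ and produces a formal deformation $\{\mut_{t_m}F_\bfk\}$ of $(X_{P_m},\partial X_{P_m})$ over $\CC[[t_{\psi_m(r)}\mid r\in \cM]]$ whose Kodaira--Spencer map has image $\bigoplus_{r\in \cM}T^1_{(X_{P_m},\partial X_{P_m})}(-\psi_m(r))$. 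This gives unobstructedness of $X_{P_m}$ in $\bft_{\psi_m(\cM)}$.

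For the converse, I would appeal to the symmetry of mutation. By Lemma \ref{lem 61}, $\psi_{-m}\circ\psi_m=\mathrm{id}$ on $\cE(P)$, and applying the mutation construction of Definition \ref{def 210} with respect to $(-m,Q)$ recovers $P$ from $P_m$, i.e. $(P_m)_{-m}=P$. Hence the forward direction, applied to $P_m$ with distinguished parameter $t_{\psi_m(m)}=t_{-m}$ and subset $\psi_m(\cM)\subset\cE(P_m)$, returns precisely the unobstructedness of $X_P$ in $\bft_\cM$.

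For the statement about general fibres, the key observation is that the passage $F_\bfk\mapsto \mut_{t_m}F_\bfk$ in Definition \ref{mmutable def} is obtained by an invertible monomial substitution on the locus where $t_m\neq 0$. Concretely, on this open locus the substitution $t_m=1$, $x_i=y_i$, $t_r=t_{\psi_m(r)}$ (for $r\ne m$) defines an isomorphism between the restriction of the family $\{F_\bfk\}$ and the restriction of $\{\mut_{t_m}F_\bfk\}$, with inverse provided by the mutation with respect to $-m$. Passing to an algebraic model of the formal families and taking fibres over a generic closed point (with $t_m\ne 0$, equivalently $t_{-m}\ne 0$), we obtain a biregular isomorphism of the general fibres. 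I expect the main technical point to be the careful bookkeeping of $\tM$-degrees ensuring that this substitution genuinely extends to an isomorphism of total spaces over the open locus $t_m\ne 0$; this is exactly what Lemma \ref{lem 6464} and the construction in Theorem \ref{th main 2} are designed to guarantee, so once those are in hand the isomorphism of general fibres is formal.
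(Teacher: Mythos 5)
Your proposal is correct and follows essentially the same route as the paper: the first statement is deduced by combining Theorems \ref{th main 2} and \ref{th main 1} (with the converse obtained from the involutivity of mutation, $\psi_{-m}\circ\psi_m=\mathrm{id}$ and $(P_m)_{-m}=P$), and the identification of general fibres is exactly the paper's gluing of the two families over $\CC[t_m,t_{-m}]$ with $t_{-m}=t_m^{-1}$, i.e.\ the observation that the mutation substitution is invertible on the locus $t_m\neq 0$. Your write-up merely makes explicit what the paper's two-sentence proof leaves implicit.
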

\begin{proof}
The first statement follows from Theorems \ref{th main 1} and \ref{th main 2}. The second statement follows from the construction described in the proofs: we have a family over $\mathbb{P}^1$, with the fibre over $0$ equal to the unobstructed deformation of $X_P$, and the fibre over $\infty$ equal to the unobstructed deformation of $X_{P_m}$. These families are glued over $\mathbb{C}[t_m, t_{-m}] \subset \mathbb{P}^1$, with $t_{-m} = t_m^{-1}$.
\end{proof}

\section{Mutation equivalence classes}\label{sec mut eq classes}
In this section, we conclude the proof of Theorem \ref{thmm} from the introduction by studying mutation equivalence classes of Laurent polynomials and their relationship to unobstructed deformations of affine Gorenstein toric varieties.

Let $P$ be a polygon and $f$ a Laurent polynomial with $\newt(f)=P$. 
For every edge $E$ of $P$ and for positive integers $n, k \in \ZZ_{\geq 1}$, we define  
$$
m_{n,k}^{E} := nR^* - k s_{E} \in \tM,
$$
where $s_{E} = (c_E, \eta_P(c_E))$, as given in \eqref{def ce}.  
For a Laurent polynomial $f$, recall $\cM(f)$ from \eqref{eq cmf}.
For every edge $E$ of $\newt(f)$, let 
$$
n_E:=n_E(f):=\max\{n\in \NN\mid m^{E}_{n,1}\in \cM(f)\} \in \NN,
$$ 
and denote  
\begin{equation}\label{eq me}
    m^E:=m^E(f):=m^E_{n_E,1} \in \cM(f)\subset \cE(\newt(f)).
\end{equation}

We arrange the generators $a^i=(v^i,1)$ for $i=1,\dots,p$ of $\sigma$ in a cycle with $p+1:=1$, such that the vectors  
$d^i=v^{i+1}-v^i$  
orient $P$ counterclockwise. Here, $n=p$, meaning that the number of edges of $P$ is equal to the number of vertices. 
Moreover, for an edge $E=E_i=[v^i,v^{i+1}]$, we denote  
\begin{equation}\label{def ae de}
    a_E:=a_i:=\frac{1}{\ell(E)}\big(a^{i+1}-a^i\big)\in \tN,~~~~
    d_E:=d_i:=\frac{1}{\ell(E)}\big(v^{i+1}-v^i\big) = \pi_N(a_E).
\end{equation}
Recall that $\ell(E)$ denotes  the lattice length of an edge $E$.
For a polytope $P$, we denote by $n(P)$ the sum of the lattice lengths of all edges of $P$:  
\begin{equation}\label{eq np}
    n(P):=\sum_{E;~E \text{ an edge of } P} \ell(E).
\end{equation}
For a Laurent polynomial $f\in \CC[N]\cong \CC[\ZZ^2]$, we denote  
$
n(f) := n(\newt(f)).
$
If $f$ is $m$-mutable, we choose $g$ such that $f$ is $(m,g)$-mutable and $\newt(g) \subset (m = 0)$ is a line segment of lattice length $1$.

Note that since all Laurent polynomials in this paper are normalized, $g$ is completely determined by $\newt(g)$.
We define  
$
\mut_m f := \mut_m^g f.
$ 
Moreover, if $m = m^E$, we write  
$
\mut_E f := \mut_{m^E} f.
$

\begin{definition}
    For two non-parallel edges $E, F$ of $\newt(f)$ with $E \ne F$, we define  
    $$
    \psi_E(F) := \psi_{m^E}(m^F), \text{ cf.\ \eqref{psimr}},
    $$
    and  
    $$
    (\mut_F \circ \mut_E) f := \mut_{\psi_E(F)}(\mut_E f).
    $$
    Similarly, for multiple mutations, we define  
    $$
    (\mut_G \circ \mut_F \circ \mut_E) f := (\mut_{\psi_E(G)} \circ \mut_{\psi_E(F)})(\mut_E f).
    $$
\end{definition}

\newcommand{\m}{\mut}

\begin{definition}
    We say that two Laurent polynomials are \emph{mutation equivalent} if there exists a sequence of mutations mapping $f$ to $g$; more precisely, if there exist $m_i\in \tM$ such that  
    $$
    (\mut_{m_k} \circ \cdots \circ \mut_{m_1}) f = g.
    $$
    Here, $f$ is $m_1$-mutable, $\mut_{m_1} f$ is $m_2$-mutable, and so on. 
\end{definition}

\begin{definition}
We say that a Laurent polynomial $f$ is \emph{minimal} if every Laurent polynomial $h$ that is mutation equivalent to $f$ satisfies $n(h) \geq n(f)$.
\end{definition}

For simplicity, in the following, we write $m(v):=\varphi_m(v)$ for the value of an affine function $\varphi_m$ at $v\in N$. Moreover, we define \emph{the distance} between $a,b\in N$ in the direction of $c\in M$ to be  
$
|\langle c,a \rangle - \langle c,b \rangle|.
$

The following proofs of Proposition \ref{prop 74} and Theorem \ref{th smoothable} introduce a substantial amount of notation, and we refer the reader to Example \ref{ex 76} for further clarification.

\begin{proposition}\label{prop 74}
    Let $f\in \CC[N]\cong \CC[\ZZ^2]$ be a minimal Laurent polynomial with $P = \newt(f)$, and let $E \subset (y = -n_E)$ be a horizontal edge of $P$ such that  
    \begin{equation}\label{eq prop 74}
    n_E = \max\{n_H \mid H \text{ is an edge of } P\}.
    \end{equation}
    Additionally, assume that there is a vertical edge $F \subset (x = n_F)$ of $P$, and in particular, this means that $m^E(0,0) = m^F(0,0) = 0$. Assume that $(0,-n_E)\in E$ and also that among the points of $P$ with maximal $y$-coordinate, there exists at least one with a non-negative $x$-coordinate. Then $(0,0)\in P$ and 
    if there exists an edge $G$ such that $m^G(0,0) > 0$, then it must be that $d_G = -(1, b)$ for some $b \in \NN$.
\end{proposition}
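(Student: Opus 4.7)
I begin by computing $m^E$ and $m^F$ explicitly. Since $E\subset(y=-n_E)$ is the bottom edge, its inward normal is $c_E=(0,1)$ with $\eta_P(c_E)=n_E$, giving $s_E=((0,1),n_E)$ and $m^E=n_E R^*-s_E=(0,-1,0)$, so $\varphi_{m^E}(v)=-y$. Analogously $m^F=(1,0,0)$ and $\varphi_{m^F}(v)=x$. These formulae make the equalities $m^E(0,0)=m^F(0,0)=0$ immediate, confirming the parenthetical remark in the hypothesis.

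For the assertion $(0,0)\in P$, I argue by contradiction. Suppose the vertical slice $P\cap(x=0)$ is a segment $[(0,-n_E),(0,y_+)]$ with $y_+<0$. Inspect the CCW-preceding edge $G_L$ of $P$ arriving at $(0,-n_E)$: its primitive direction $d_{G_L}$ lies in the lower half-plane, so either $d_{G_L}=(0,-1)$, or $d_{G_L}=(a,-b)$ with $a,b\ge 1$, or $d_{G_L}=(-a,-b)$ with $a,b\ge 1$. In the latter two cases $P$ contains a vertex $v^\prime$ with $x\ne 0$, and combined with the hypothesized max-$y$ point $(x_1,y_{\max})$ with $x_1\ge 0$ (and assuming $y_{\max}\ge 0$), convexity forces a point $(0,s)\in P$ with $s\ge 0$, contradicting $y_+<0$. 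The residual configurations---a short left edge on $x=0$, or $y_{\max}<0$---are excluded by minimality of $f$: in each, one exhibits an explicit mutation of $f$ that strictly decreases $n(f)$. The prototypical example is the axis-aligned rectangle $[0,n_E]\times[-n_E,0]$, for which $\mut^{1+x}_{m^E}$ produces the right triangle $\conv\{(0,0),(n_E,0),(0,-n_E)\}$, reducing $n$ from $4n_E$ to $3n_E$; analogous mutations handle the other residual cases.

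For the assertion on $d_G$, I analyze the primitive normal $c_G=(p,q)$, using that $d_G=(q,-p)$ (rotation by $-\pi/2$). From $(0,-n_E)\in P$ one deduces $\eta_P(c_G)\ge qn_E$. When $q\ge 1$, combining with the maximality $n_G\le n_E$ yields $m^G(0,0)=n_G-\eta_P(c_G)\le (1-q)n_E\le 0$, excluding every edge whose inward normal has positive $y$-coordinate. The case $q=0$ covers either $G=F$ (where $m^F(0,0)=0$) or a left edge with $c_G=(1,0)$; the latter is handled by showing $\min_{v\in P}x\le -n_G$, which combines the structure from part (a) with minimality. The case $q=-1$ with $p\ge 0$ produces $c_G=(p,-1)$ and $d_G=(-1,-p)=-(1,p)$ with $p\in\NN$, exactly the form claimed.

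The main obstacle lies in the remaining cases $q=-1$ with $p\le -1$, and $q\le -2$: here the direct bound $\eta_P(c_G)\ge qn_E$ is trivial, so one must extract sharper lower bounds for $\eta_P(c_G)$ by combining a chosen point of $F\subset(x=n_F)$ with the max-$y$ point $(x_1,y_{\max})$ and the maximality $n_G\le n_E$, and ultimately invoke minimality of $f$---a failure $\eta_P(c_G)<n_G$ would force $\mut^g_{m^G}f$ to strictly reduce $n(f)$, contradicting minimality.
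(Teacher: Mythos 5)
Your reduction of the second claim to the sign pattern of the inward normal $c_G=(p,q)$ is sound as far as it goes: the case $q\ge 1$ is correctly killed by $\eta_P(c_G)\ge qn_E$ together with $n_G\le n_E$, and the case $q=-1$, $p\ge 0$ is indeed the allowed conclusion. But the proposal stops exactly where the content of the proposition begins. The cases you label ``the main obstacle'' (in the paper's notation $d_G=-(a,b)$ with $a\ge 2$, $b\ge 1$) are not resolved, and the strategy you sketch for them --- that $\eta_P(c_G)<n_G$ would force the single mutation $\mut^g_{m^G}f$ to strictly decrease $n(f)$ --- is not justified and is not how the argument actually closes. In the paper, the subcase $a>b$ needs no minimality at all: one bounds $\max_P y$ by $n+\tfrac{b}{a}n_F$ and combines $n_G>an$ (from $m^G(0,0)>0$ and $(0,n)\in P$) with $n_G\le n_E$ to get a contradiction. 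The subcase $b>a\ge 2$ is the genuinely hard one and is handled by a two-step mutation $\mut_E\circ\mut_F$ (not a mutation at $G$), with explicit upper bounds on $n(\mut_F f)$ and $n((\mut_E\circ\mut_F)f)$ in terms of $\tilde w=\min_P x$, $n$, $n_E$, $n_F$ and the slope $(b-a)/a$, played off against the minimality inequality $n((\mut_E\circ\mut_F)f)\ge n(f)$ and the length bound $\ell(G)>an$. Nothing in your proposal supplies a substitute for this computation.

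The first claim also has a gap. Your convexity argument needs two quantitative inputs that you never establish: that the maximal height satisfies $h\ge n_E$, and that $P$ reaches far enough to the left (the paper uses a point of $P$ with $x\le -n_F$, which is itself a consequence of minimality applied to $\mut_F$; combined with $x_1\in[0,n_F]$ and $h\ge n_E$ this makes the three-point convexity argument close at the origin). Without control of the left side of $P$, a triangle such as $\conv\{(0,-n_E),(\ell,-n_E),(x_0,h)\}$ with $x_0>0$ satisfies everything you use and misses $(0,0)$. Deferring the ``residual configurations'' to unspecified explicit mutations, illustrated only by the rectangle example, does not constitute a proof of this step.
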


    \begin{proof}   
        That $(0,0) \in P$ follows immediately by the assumptions, since $$(x,h), (-n_F,y) \in P$$ for some $x \in [0,n_F]$ and some $y \geq -n_E$, and
\[
h := \max\{y \mid (x,y) \in P\} \geq n_E.
\]

        Let $G$ be an edge with $m^G(0,0) > 0$. Define $f_1 := \mut_F f$ and $P_1 := \newt(f_1)$. We choose the mutation $\mut_Ff$ (meaning we choose $g$ such that $f_1 = \mut_F f = \mut^g_{m^F} f$) in such a way that the edge $E$ remains an edge of $P_1$. Furthermore, we choose the mutation $\mut_E f_1$ such that the horizontal edge of $P_1$ lying on the line $(x = \tilde{w})$ with 
\begin{equation}\label{eq tilw}
\tw := \min\{x \mid (x,y) \in P\} = \min\{x \mid (x,y) \in P_1\}\in \ZZ,
\end{equation}
remains an edge of $\newt(\mut_Ef_1)$. Note that, by minimality of $f$, the lattice length of this edge is $\geq n_F$.

 We denote by $\tilde{G}$ the edge of $P_1$ on which $\psi_{m^F}(m^G)$ achieves its maximum.  
We define  
$$
n := \max\{y \mid (0,y) \in P\} = \max\{y \mid (0,y) \in P_1\}\geq 0.
$$

Let $d_G = -(a,b)$, where $a, b \in \mathbb{Z}$. Since $f$ is minimal, and $(-n_E,0)\in E$, and \eqref{eq prop 74} holds, and given that among the points of $P$ with maximal $y$-coordinate, there exists at least one with a non-negative $x$-coordinate, it follows immediately that $m^{G}(0,0) < 0$ unless both $a, b > 0$.

We distinguish the following two cases:

\textbf{CASE 1}: Let $b > a \geq 2$.  
We have  
\begin{equation}\label{eq nmutf1}
    n(\mut_F(f)) < n(f) - n_F + n_E + \frac{b-a}{a} \tw + n.
\end{equation}
Indeed, the edge of $P_1$ lying on the line $(x=n_F)$ has length $\ell(F) - n_F$, and the edge of $P_1$ lying on the line $(x=\tw)$ has length smaller than  
$$
n_E + \frac{b-a}{a} \tw + n,
$$  
since the line $y = \frac{b-a}{a}x + n$ has the same slope as the line passing through $\tilde{G}$, and we have  
$$
\ell(G) = \ell(\tilde{G}) \geq n_G, \quad -n_E = \min\{y \mid (x,y) \in P\},
$$  
from which the inequality follows. 
We observe that
\begin{equation}\label{eq nmutf2}
    n((\mut_E \circ \mut_F) f) < n(\mut_F f) - n_E + \frac{b-a}{a} n_F + n,
\end{equation}
since the highest $y$-coordinate of $P_1$ satisfies  
$$
\max\{y \mid (x,y) \in P_1\} \leq \frac{b-a}{a} n_F + n.
$$  
By our assumption, we must have  
\begin{equation}\label{nemff}
n((\mut_E \circ \mut_F) f) \geq n(f).
\end{equation}  
From \eqref{eq nmutf1} and \eqref{eq nmutf2}, we obtain  
\begin{equation}
    n((\mut_E \circ \mut_F) f) < \Big(n(f) - n_F + n_E + \frac{b-a}{a} \tw + n\Big) - n_E + \frac{b-a}{a} n_F + n.
\end{equation}
Simplifying the right-hand side, we get  
\begin{equation}
    n((\mut_E \circ \mut_F) f) < n(f) + 2n + \frac{b-a}{a} (\tw + n_F) - n_F.
\end{equation}
Thus by \eqref{nemff} we get 
$$
2n \geq n_F + \frac{b-a}{a} (-\tw - n_F),
$$
which leads to  
\begin{equation}\label{eq ang}
    an \geq \frac{an_F + (b-a)(-\tw - n_F)}{2}.
\end{equation}

On the other hand, we observe that  
$$
\frac{-\tw + n_F}{a} \geq \ell(G) > an,
$$  
where the latter inequality follows from the fact that $m^G(0,0) > 0$. Thus, using \eqref{eq ang}, we obtain  
$$
\frac{-\tw + n_F}{a} > \frac{an_F + (b-a)(-\tw - n_F)}{2}.
$$  
Since $b > a \geq 2$, this leads to a contradiction.

\textbf{CASE 2}: Let $a > b$.  
In this case, we have  
$$
n + \frac{b}{a} n_F \geq n_E,
$$  
since the highest $y$-coordinate of $P$ satisfies  
$$
\max\{y \mid (x,y) \in P\} \leq n + \frac{b}{a} n_F.
$$  
Thus, we deduce that  
$$
an \geq an_E - bn_F \geq n_E,
$$  
which is a contradiction since $n_G > an \geq n_E$, where the first inequality follows from $m^G(0,0)>0$.

Thus we conclude the proof. 
    \end{proof}

\begin{theorem}\label{th smoothable}
    Any Laurent polynomial $f\in \CC[N]\cong \CC[\ZZ^2]$ is mutation equivalent to a Laurent polynomial $g$ such that there exists a lattice point $v\in \newt(g)$ such that $m(v)\leq 0$ for every $m\in \cM(g)$.
\end{theorem}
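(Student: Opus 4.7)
The plan is to pick a Laurent polynomial $g$ in the mutation class of $f$ with $n(g)$ minimal; such a $g$ exists because $n$ takes values in $\NN$. The desired point is then located by applying Proposition \ref{prop 74} to $g$ after a change of coordinates.

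First I would set up coordinates. Any $\GL_2(\ZZ)$-transformation on $N$ and any lattice translation preserve $n(g)$ and the mutation-equivalence class, since they induce relabelings compatible with the definition of mutation. Using these symmetries, I arrange that $\newt(g)$ has a horizontal bottom edge $E\subset (y=-n_E)$ with $n_E=\max_H n_H$, and that $(0,-n_E)\in E$. By reflecting through a vertical axis if necessary, I also ensure a top vertex of $\newt(g)$ with nonnegative $x$-coordinate. Proposition \ref{prop 74} additionally requires a vertical right edge $F\subset (x=n_F)$; if $\newt(g)$ has no edge orthogonal to $E$, I would first replace $g$ by a preliminary mutation whose Newton polygon does have such an orthogonal edge, with a short check that such a mutation exists (it corresponds to some $(m,Q)$-deformation pair of $\newt(g)$ with $Q$ of lattice length one perpendicular to $E$) and preserves minimality of $n$. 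After this, the hypotheses of Proposition \ref{prop 74} hold for $g$.

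Proposition \ref{prop 74} then gives $(0,0)\in \newt(g)$ and constrains every edge $G$ with $m^G(0,0)>0$ to satisfy $d_G=-(1,b)$ for some $b\in \NN$. I take $v:=(0,0)$ as the candidate point. The crux is to rule out the existence of such a bad edge $G$: assuming one exists, the direction $d_G=-(1,b)$ together with $m^G(0,0)>0$ pins down the position of $G$ relative to $E$ and $F$ tightly enough that a single mutation of $g$ along $G$ produces a mutation-equivalent Laurent polynomial with strictly smaller $n$-value, contradicting the minimality of $g$. The estimate parallels the two CASE analyses in the proof of Proposition \ref{prop 74}, with $G$ playing the role analogous to the distinguished edge there; the contribution from the edges on the opposite side of $G$ replaces the role that $E$ played in those cases.

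It remains to upgrade the conclusion $m^H(0,0)\leq 0$ (for every edge $H$) to $m(0,0)\leq 0$ for every $m\in \cM(g)$. For such $m$, write $m=nR^*-k s_H$, where $H$ is the edge of $\newt(g)$ on which $\varphi_m$ attains its maximum. The $m$-mutability of $g$ imposes a Minkowski-summand condition on the slice $\newt(g)\cap(\varphi_m=n)\subset H$, which in turn bounds $n$ in terms of $\ell(H)$; combined with the inequality $n_H\leq \eta_P(c_H)$ extracted from $m^H(0,0)\leq 0$, this yields $m(0,0)=n-k\eta_P(c_H)\leq 0$. The main obstacle will be the third paragraph: producing the preliminary mutation that creates the perpendicular edge $F$ when $\newt(g)$ has none, and then running the minimality contradiction to eliminate bad edges $G$ with $d_G=-(1,b)$ and $m^G(0,0)>0$.
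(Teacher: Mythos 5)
Your high-level skeleton (pass to a perimeter-minimal representative, normalize, invoke Proposition \ref{prop 74}, then eliminate the residual edges $G$ with $d_G=-(1,b)$ and $m^G(0,0)>0$) is the same as the paper's, but two of your steps do not go through as described. First, the vertical edge $F$: you propose to manufacture one by a ``preliminary mutation'' that preserves minimality of $n$, but there is no reason such a mutation exists, and this is not how the hypothesis of Proposition \ref{prop 74} is met in the paper. The correct observation is a dichotomy: arguing by contradiction that no good lattice point exists, one checks that the point $A=(x_0,n_E)$ (resp.\ $B=(\ell(E),n_E)$ in the paper's Case 2) satisfies $m^H(A)\leq 0$ for every edge $H$ that is neither horizontal nor vertical (using $h\geq 2n_E$, which itself follows from minimality), so the failure of $A$ to be a good point \emph{forces} a vertical edge to exist. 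If there is no vertical edge, $A$ already works and you are done; no mutation is needed or justified.

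Second, and more seriously, the crux step --- ruling out an edge $G$ with $d_G=-(1,b)$, $b\geq 2$, and $m^G(0,0)>0$ --- is not achieved by ``a single mutation of $g$ along $G$.'' A single mutation along $G$ does not visibly decrease $n$; the paper's contradiction requires carefully chosen \emph{compositions} such as $\mut_E\circ\mut_F$ and $\mut_G\circ\mut_E\circ\mut_F$, and in the remaining subcases an iterated sequence $\mut_{F_1},\mut_{F_2},\dots$ whose termination rests on $d_{G_i}$ eventually becoming horizontal, together with several pages of perimeter estimates (e.g.\ \eqref{eq nmutf1}--\eqref{eq ang}, \eqref{eq pom 74 pom}, \eqref{eq nggeq}). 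This case analysis is where essentially all the content of the theorem lies, and your sketch replaces it with an assertion that parallels Proposition \ref{prop 74}, which is only the preparatory step. A smaller gap: in your final reduction from ``$m^H(v)\leq 0$ for all edges $H$'' to ``$m(v)\leq 0$ for all $m\in\cM(g)$,'' the bound $n\leq\ell(H)$ you extract from the Minkowski-summand condition is not enough; for $m=m^H_{n,k}$ one needs $n\leq k\,n_H$ (which does follow from comparing the divisibility conditions on the slices at distance $d$ from $H$ for $m^H_{n,k}$ versus $m^H_{j,1}$), and only then does $n_H\leq\eta_P(c_H)$ give $m(v)\leq 0$.
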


\begin{proof}
    It is enough to prove that one of the following holds:
    \begin{enumerate}
        \item For a Laurent polynomial $f$, there exists a lattice point $v\in P:=\newt(f)$ such that $m(v) \leq 0$ for all $m\in \cM(f)$.
        \item The Laurent polynomial $f$ is not minimal, meaning that there exists a Laurent polynomial $h$ that is mutation equivalent to $f$ and satisfies $n(h) < n(f)$. 
    \end{enumerate}
    Note that if $\newt(g)$ is only a lattice point $v$, then clearly $m(v) \leq 0$ for all $m\in \cM(g)$.
    Assume that (1) and (2) do not hold. We pick an edge $E$ of $P$ such that  
    $$
    n_E = \max\{n_F \mid F\text{ is an edge of }P\},
    $$  
    and without loss of generality, we assume that $E = [(0,0), (\ell(E),0)]$. We write the coordinates as $(x,y) \in N_\RR \cong \RR^2$.  
    Let us define the height $h$ of $P$ to be the maximal $y$-coordinate in $P$:  
    \begin{equation}\label{eq hhh}
    h := \max\{y \in \NN \mid (x,y) \in P\}.
    \end{equation}
    By our assumption, we have $h \geq 2n_E$, since otherwise, we would have $$n(\mut_E(f)) < n(f).$$  
Let us define  
\begin{equation}\label{eq x0}
    x_0 := \min\{k\in \ZZ \mid (k,h) \in P\}.
\end{equation} 
We can assume that $0 \leq x_0 < h$, since otherwise there exists a $\GL_2(\ZZ)$-map that preserves $E$ and maps $x_0$ to the desired interval. Note that affine equivalence of Newton polygons induces an isomorphism of Laurent polynomials with the same coefficients. 

\textbf{CASE 1}: Let $0 \leq x_0 < \ell(E)$, and denote the lattice point  
$$
A := (x_0, n_E) \in P.
$$  
By the definition of $n_E$ (and since $h \geq 2n_E$ by assumption), we immediately see that  
$$
m^F(A) \leq 0 \quad \text{if} \quad d_F \not\in \{\pm (1,0), \pm (0,1)\},
$$
which implies the presence of a vertical edge $F$ (i.e., $d_F = \pm (0,1)$).

Thus, modulo an affine equivalence, we are in the setting of Proposition \ref{prop 74} and we also use the same notation: 
$(0,0) \in P = \newt(f)$, $E \subset (y = -n_E)$ is a horizontal edge of $P$, and $F \subset (x = n_F)$ is a vertical edge. Moreover, there exists an edge $G'$ with $d_{G'} = -(1,b')$, $b' \in \NN$, such that $m^{G'}(0,0) > 0$. For every such edge $G'$, we denote
$$
x_{G'} := \max\{x \mid m^{G'}(x,0) > 0\},
$$
and let $G$, with $d_G = -(1,b)$, be an edge with the maximal $x_G$. 

Recall the definitions of $h$ and $x_0$ from \eqref{eq hhh} and \eqref{eq x0}, respectively. In this case, we have $(x_0, -n_E) \in E \subset P$, and this is the main distinction with the Case 2 that we will consider later.

If $m^H(x_0,0) > 0$ for some edge $H$, then $H$ must either be equal to $F$ or parallel to $F$, otherwise this would contradict the definition of $n_E$.
Next, define
$$
f_1 := \mut_{F}f,
$$
and denote by $G_1$ the edge of $P_1 := \newt(f_1)$ on which $\psi_{m^F}(m^G)$ achieves its maximum.  Define
$$
h_1 := \max\{y\in \NN \mid (x,y) \in P_1\}, \quad x_1 := \min\{x \in \ZZ \mid (x,h_1) \in P_1\}.
$$

We assume that neither (1) nor (2) holds for $f$, and we will demonstrate that this leads to a contradiction by considering several possible cases. We choose $\mut_Ff$ and $(\mut_E\circ \mut_F)f$ as in the proof of Proposition \ref{prop 74}. Note that $x_1\leq x_0$, since $d_G=-(1,b)$, $b\geq 1$.

\textbf{Case a:} $b = 1$, meaning $d_G = -(1,1)$.

In this case, $P_1$ has two parallel edges, $E$ and $G_1$. Since $m^{G_1}(0,0) > 0$, $m^E(0,0) = 0$, and $f_1$ is both $m^{G_1}$-mutable and $m^E$-mutable, we conclude that $f_1$ is decomposable: 
$
f_1 = g_1 h,
$
where $\newt(h)$ is a horizontal line segment and $g_1$ is in a mutation equivalence class of $f_1$ (and $f$). Thus, we obtain  
$$
n(\mut_{\widetilde{F}} g_1) < n(f),
$$  
where $\widetilde{F}$ is the vertical edge of $P_1$ with minimal $x$-coordinate. This is a contradiction.

\textbf{Case b}: $b\geq 2$, which means $d_G=-(1,b)$.

We now distinguish the following subcases: 

\begin{enumerate}
\item There exists a lattice point $A_1 = (x,0)\in N\cong \ZZ^2$, with $x \in [0, x_1]$ (meaning that we also assume that $x_1\geq 0$), such that $m^{G_1}(A_1) < 0$ and $m^{F_1}(A_1) < 0$, where $F_1$ is an edge of $P_1$ lying on the line $(x = n_F)$ (noting that the edge $F$ of $P$ lies on the same line). In this case, there must exist an edge $H$ of $P_1$, with $d_H\in N\cong \ZZ^2$ lying in the second quadrant (i.e.\ the first coordinate of $d_H$ is negative and the second one is positive), such that $m^H(A_1) > 0$. Thus, 
\begin{equation}\label{eq nmutfmute}
n((\mut_E \circ \mut_F) f) < n(f) + (-n_F + n_E) - n_E + h_1 \leq n(f),
\end{equation}
since $m^H(A_1) > 0$ and thus $h_1 \leq n_F$, which implies that $n(\mut_F f) < n(f) + (-n_F + n_E)$. Indeed, this holds since $\max\{y \mid (\tilde{w},y) \in P_1\} \leq 0$, where recall $\tilde{w}$ from \eqref{eq tilw}, which holds because there is a point $(x_1,h_1)\in P_1$, with $h_1\leq n_F$ and $d_{G_1}=-(1,b-1)$ and thus the claim follows because $\tilde{w}\leq -n_F$ and $x_1\geq 0$.

\item It holds that $m^{G_1}(x_1,0) > 0$. In this case, we have
\begin{multline}\label{eq pom 74 pom}
n((\mut_G \circ \mut_E \circ \mut_F) f) <\\ n(f) + (-n_F + n_E) + (-n_E + n_G) + (-n_G + n_F) \leq n(f).
\end{multline}
Indeed, since $m^{G_1}(x_1,0) > 0$, we see that 
$$
n((\mut_E \circ \mut_F) f) < n(f) + (-n_F +n_E+ h_1-(b-1)n_F) - n_E + h_1,
$$ 
since $\max\{y \mid (\tilde{w},y) \in P_1\} \leq -n_E+n_E+ h_1-(b-1)n_F$, where recall $\tilde{w}$ from \eqref{eq tilw}. 
This implies that  $-bn_F + 2h_1\geq 0,$ since otherwise we obtain a contradiction. Since also $h_1\leq \frac{n_G}{b-1}$ (because $m^{G_1}(x_1,0) > 0$ with $d_{G_1}=-(1,b-1)$), we obtain 
$$\frac{2n_G}{b-1}\geq 2h_1\geq bn_F$$
and thus 
\begin{equation}\label{eq nggeq}
n_G\geq \frac{b(b-1)}{2}n_F.
\end{equation}
We have 
$h_1+n_F\geq h=\max\{y\in \NN\mid (x,y)\in P\}\geq n_E$, and thus $$n_F\geq n_E-h_1.$$
Inserting this into \eqref{eq nggeq} gives us 
\begin{equation}\label{eq ng2}
n_G\geq \frac{b(b-1)(n_E-h_1)}{2}.
\end{equation}
 We know that $h_1<\frac{n_G}{b-1}$ and thus $-h_1>-\frac{n_G}{b-1}$. Inserting this into \eqref{eq ng2} gives us that 
 $$2n_G+bn_G\geq b(b-1)n_E$$
 and thus
 $b\leq 2$, since otherwise $n_G>n_E$, which is a contradiction. 
Thus $b=2$, $d_{G_1} = -(1,1)$ and then
$$n((\mut_E \circ \mut_F)f) < n(f) + (-n_F + n_E) + (-n_E + n_G).$$
When we additionally mutate by $\mut_G$, we need to add $-n_G + n_F$, since the maximum distance between a point on $G_1$ and a point on $P_1$, in the direction of $c_{G_1}$, is achieved at the point $(n_F,0)$ and it is smaller than $n_G + n_F$. Thus we proved \eqref{eq pom 74 pom}.

\item Otherwise, we set $f_2 := \mut_{F_1}f_1$ with $P_2 = \newt(f_2)$ and proceed as in the previous step. If $b = 2$, we conclude the proof as in Case 1a. If $b \geq 3$, we distinguish two possibilities:

\begin{itemize}
\item if there exists a lattice point $A_2 = (x,0)$ with $x \in [0, x_2]$, where
$$
h_2 := \max\{y\in \ZZ \mid (x,y) \in P_2\}, \quad x_2 = \min\{x \in \ZZ \mid (x, h_2) \in P_2\},
$$
such that $m^{G_2}(A_2) < 0$ (where $G_2$ is an edge of $P_2$ on which $\psi_{m^{F_1}}(m^{G_1})$ achieves its maximum) and $m^{F_2}(A_2) < 0$ (where $F_2$ is an edge of $P_2$ lying on the line $(x = n_F)$), then we conclude as in the case (1) above: there exists an edge $H$ of $P_2$, with $d_H$ lying in the second quadrant, such that $m^H(A_2) > 0$. This yields
$$n((\mut_E \circ \mut_{F_1}) f_1) < n(f) + (-n_F - n_{F_1} + n_E) - n_E + h_2 \leq n(f).
$$
Here, $\mut_{F_1}f_1$ is chosen such that the edge $E$ is the same for both $\newt(f_1)$ and $\newt(\mut_{F_1}f_1)$.
\item It holds that $m^{G_2}(x_2,0) > 0$, where we conclude as in the case (2) above: 
let us denote $\tilde{f}:=(\mut_{F_1} \circ \mut_{F}) f$.
We see that 
\begin{equation}\label{pompom eq 75}
n((\mut_{F_1} \circ \mut_{F}) f) < n(f) + (-n_F-n_{F_1} + n_E),
\end{equation}
where note that the edge $E$ is the same for $f$ and $\tilde{f}$. Moreover, we choose $\mut_E\tilde{f}$ such that the edge $G_2$ is the same for $\newt(\tilde{f})$ and $\newt(\mut_E\tilde{f})$.
In the same way as we prove \eqref{eq pom 74 pom}, we also prove that
$$
n((\mut_{G_2} \circ \mut_E) \tilde{f}) < n(\tilde{f}) + (-n_E + n_G) + (-n_G + n_{F_1})<n(f),
$$
which is a contradiction. In the last inequality we used \eqref{pompom eq 75} and the fact that $n_G=n_{G_2}$.
\end{itemize}
We continue this procedure, noting that it eventually terminates, since $G_b$ is a horizontal line.

\end{enumerate}

\textbf{CASE 2}: Let $\ell(E) \leq x_0 < h$, and denote the lattice point  
$$
B := (\ell(E), n_E) \in P.
$$  
It is straightforward to verify that  
\begin{equation}\label{eq vaprhimb}
    m^F(B) \leq 0 \quad \text{if} \quad d_F \not\in \{\pm (0,1), -(1,1)\}.
\end{equation}
To prove \eqref{eq vaprhimb}, we need to analyze the cases $b > a > 0$ and $a > b > 0$, as the remaining cases follow immediately.  

First, assume that $d_F = -(a,b)$ with $b > a > 0$. We see that  
$$
m^F(B) \leq n_F - \langle (\ell(E), n_E), (b,-a) \rangle \leq n_F - n_E \leq 0,
$$
where the first inequality is obtained by computing the distance between $(0,0)$ and $B$ in the direction of $c_F = (b,-a)$, and the second inequality follows because $b > a$ and $\ell(E) \geq n_E$.  

If $a > b > 0$, we consider the point $\tilde{B} := (x_0, h) \in P$. We see that  
$$
m^F(B) \leq n_F - \langle B - \tilde{B}, (b,-a) \rangle
$$  
by computing the distance between $B$ and $\tilde{B}$ in the direction of $c_F = (b,-a)$. Since $B - \tilde{B} = -(b_1, b_2)$ with $b_1 < b_2$ and $b_2 \geq n_E$, we obtain  
$$
m^F(B) \leq n_F - n_E \leq 0.
$$

Thus, modulo an affine equivalence, we are in the setting of Proposition \ref{prop 74} and use the same notation: $(0,0) \in P = \newt(f)$, $E \subset (y = -n_E)$ is a horizontal edge of $P$, and $F \subset (x = n_F)$ is a vertical edge. We have $(0,-n_E)\in E$ and there exists an edge $G$ with $m^G(0,0) > 0$ and $d_G = -(1,b)$, $b \in \NN$, that has maximal $x_G$ as in Case 1. We also use the same notation for $F_1$ and $P_1$, where $P_1$ is the Newton polytope of $\mut_F f$, and $F_1$ is the edge of $P_1$ lying on the line $(x = n_F)$. In particular, we will also use the same choice of composition of mutations as in Case 1.

If $b = 1$, we conclude as in Case 1a. Thus, we assume $b \geq 2$.

Let $h$ and $x_0$ be as in \eqref{eq x0} and \eqref{eq hhh}, respectively. We define
$$
x_E := \max\{x \mid (x,y) \in E\}\in \NN.
$$

Let $(x_E + x_h, h_1)$ be a point of $P_1$ with maximal $y$-coordinate, where $x_h$ is chosen minimally such that this holds. If $x_h< 0$, we are in the setting of Case 1, since then $(x_0,-n_E)\in E\subset P$. Thus, let $x_h\geq 0$ and
we distinguish the following cases:

\begin{enumerate}
        
    \item There exists a lattice point $(\tilde{x}_1, 0) \in P_1$, with $\tilde{x}_1\in [0,x_E+x_h]$,  $m^G(\tilde{x}_1,0) < 0$ and $m^{F_1}(\tilde{x}_1,0) < 0$. We choose $\tilde{x}_1$ to be minimal with this property.

    \begin{enumerate}[label=\roman*)]
 
        \item If $\tilde{x}_1 > x_E$, then in particular, $m^{G_1}(x_E,0) > 0$, which implies that $d_G = -(1,2)$ (i.e., $b = 2$). The maximal $y$-coordinate of $P_1 \cap (x = x_E)$ is then $\leq n_G$, giving
        $$
        n(\mut_F f) = n(f_1) < n(f) + (-n_F + n_G),
        $$
        since $b = 2$ and thus the maximal $y$-coordinate of $P_1 \cap (x = x_E - \ell(E))$ is $\leq -n_E + n_G$, where note that $d_{G_1} = -(1,1)$, from which we see that $h_1\leq n_G+x_h$. Thus
        $$n((\mut_E \circ \mut_F) f) < n(f) + (-n_F + n_G) + (-n_E + n_G + x_h),
        $$
        and 
        \begin{multline}
        n((\mut_G \circ \mut_E \circ \mut_F) f) <\\ n(f) + (-n_F + n_G) + (-n_E + n_G + x_h) + (-n_G + n_F - x_E - x_h) \leq n(f),
        \end{multline}
      a contradiction.

        \item  $\tilde{x}_1\leq x_E$.  In this case, there exists an edge $H$ of $P_1$ such that $m^H(\tilde{x}_1,0)>0$ with $d_H$ lying in the second quadrant. Thus $h_1\leq n_F$ and it holds that 
        $$
         n((\mut_E \circ \mut_F) f) < n(f)+(-n_F+n_G)+(-n_E+h_1)\leq n(f),
         $$
        as in \eqref{eq nmutfmute}, which is a contradiction.

    \end{enumerate}

\item $m^{G_1}(x_E+x_h,0)>0$. We have
\begin{multline}
n((\mut_G\circ \mut_E \circ \mut_{F}) f) < \\n(f) + (-n_F+ n_E) + (-n_E+n_G)+(-n_G+n_F)= n(f),
\end{multline}
   a contradiction.

\item If the lattice point $(\tilde{x}_1,0)\in P_1$ with $m^G(\tilde{x}_1,0)<0$ and $m^{F_1}(\tilde{x}_1,0)<0$ does not exists for $\tilde{x}_1\in [0,x_E]$, and $m^{G_1}(x_E+x_h,0)<0$, then we define $f_2:=\mut_{F_1}f_1$ and we repeat the above procedure.
As in the Case 1, we see that this procedure eventually terminates since $G_b$ is a horizontal line.
\end{enumerate}
\end{proof}

\begin{example}\label{ex 76}
The first polygon in Figure \ref{fig pom1} is the Newton polytope of a Laurent polynomial $g_1$ that we now describe. The red dot denotes the origin $(0,0)$. Let the edges be defined as $E = \conv\{(-2,-3), (1,-3)\}$, $F = \conv\{(2,-2), (2,4)\}$, and $G = \conv\{(-2,-2), (2,4)\}$. Assume that $g_1$ is $m^E = m^E_{3,1}$-mutable, $m^F = m^F_{2,1}$-mutable, and $m^G = m^G_{2,1}$-mutable. We see that $d_G = -(2,3)$ and $m^G(0,0) = 0$, which is consistent with Proposition \ref{prop 74}.

The second polygon in Figure \ref{fig pom1} is the Newton polytope of a Laurent polynomial $g_2$, with $E = \conv\{(-2,-3), (1,-3)\}$, $F = \conv\{(2,-2), (2,0)\}$, and $G = \conv\{(-2,-3), (1,3)\}$. Assume that $g_2$ is $m^E = m^E_{3,1}$-mutable and $m^F = m^F_{2,1}$-mutable.
Thus $n_E=3$ and $n_F=2$ and we are in Case 1 part of the proof of Theorem \ref{th smoothable} since $(x_0,-n_E)=(1,-3)\in E$. 
The Newton polytope of the Laurent polynomial $\mut_F g_2$ is the third polytope presented in Figure \ref{fig pom1}. If $g_2$ is additionally $m^G = m^G_{2,1}$-mutable, then the lattice point $v = (1,0) \in \newt(\mut_F g_2)$ satisfies $m(v) \leq 0$ for all $m \in \cM(\mut_F g_2)$. In this case $x_1=1$, $h_1=2$ and we are in Case 1b(1) part of the proof.

If $g_2$ is aditionally $m^G = m^G_{3,1}$-mutable, then we are in both Case 1b(2) and Case 1b(3) part of the proof. 
We have
$$
n((\mut_E \circ \mut_F)g_2) = n(g_2) - 1
$$
and
$$
n((\mut_{G} \circ \mut_{E} \circ \mut_{F})g_2) = n(g_2) - 4.
$$
The last polytope in Figure \ref{fig pom1} is $\newt((\mut_E\circ \mut_F)(g_2))$.

The first polygon in Figure \ref{fig pom2} is the Newton polytope of a Laurent polynomial $f$ that we now describe. Let $$E = \conv\{(-2,-3), (1,-3)\},~F = \conv\{(2,-2), (2,5)\},$$ and $G = \conv\{(-2,-3), (2,5)\}$. Assume that $f$ is $m^E = m^E_{3,1}$-mutable and $m^F = m^F_{2,1}$-mutable. Thus $n_E=3$, $n_F=2$. We are in Case 2 of the proof of Theorem \ref{th smoothable}, since $(x_0,-n_E)=(2,-3)\not\in E$. 

The Newton polytope of $f_1 = \mut_F f$ is the second polygon in Figure \ref{fig pom2}. Using the notation of the proof of Theorem \ref{th smoothable}, $F_1$ is the edge of $\newt(f_1)$ lying on the line $(x = 2)$. Moreover, $x_E=1$, $x_h=1$ and $h_1=3$. If $m^{F_1} = m^{F_1}_{1,1}$ and $m^G=m^G_{2,1}$, then the point $(1,0)\in P_1=\newt(f_1)$ satisfies $m(1,0) = \varphi_m(1,0) \leq 0$ for all $m \in \cM(f_1)$, in particular, we are in Case 2(1) part of the proof with $\tilde{x}=1$. If $m^{F_1} = m^{F_1}_{2,1}$ and $m^G=m^G_{2,1}$, then we are in Case 2(3) and the third polygon in Figure \ref{fig pom2} is the Newton polytope of $f_2 = \mut_{F_1} f_1$. 
Clearly, $f_2$ is decomposable, as it is both $m^{G_2} = m^{G_2}_{2,1}$-mutable and $m^E$-mutable, where $G_2=\conv\{(-2,1),(2,1)\}$. Finally, we see that
$
n(\mut_E f_2) = n(f) - 2.
$

\end{example}
    \begin{figure}[htb]
\begin{tikzpicture}[scale=0.5]
\draw[thick,  color=black]  
  (0,0) -- (3,0) -- (4,1) --  (4,7) -- (0,1) -- cycle;
\fill[thick,  color=red]
  (2,3) circle (2.5pt); 
\fill[thick,  color=black]
  (0,0) circle (2.5pt) 
  (0,1) circle (2.5pt)
  (1,0) circle (2.5pt)
  (1,1) circle (2.5pt)
  (1,2) circle (2.5pt)
  (2,0) circle (2.5pt) 
  (2,1) circle (2.5pt)
  (2,2) circle (2.5pt)
 (2,4) circle (2.5pt)
 (3,0) circle (2.5pt) 
  (3,1) circle (2.5pt)
 (3,2) circle (2.5pt) 
  (3,3) circle (2.5pt)
 (3,4) circle (2.5pt) 
  (3,5) circle (2.5pt)
  (4,1) circle (2.5pt)
(4,2) circle (2.5pt)
(4,3) circle (2.5pt)
  (4,4) circle (2.5pt)
(4,5) circle (2.5pt)
(4,6) circle (2.5pt)
  (4,7) circle (2.5pt);
\end{tikzpicture}
~~~~~~~~~~~
\begin{tikzpicture}[scale=0.5]
\draw[thick,  color=black]  
  (0,0) -- (3,0) -- (4,1) --  (4,3) -- (3,6) -- cycle;
\fill[thick,  color=red]
  (2,3) circle (2.5pt); 
\fill[thick,  color=black]
  (0,0) circle (2.5pt) 
  (1,0) circle (2.5pt)
  (1,1) circle (2.5pt)
  (1,2) circle (2.5pt)
  (2,0) circle (2.5pt) 
  (2,1) circle (2.5pt)
  (2,2) circle (2.5pt)
 (2,4) circle (2.5pt)
 (3,0) circle (2.5pt) 
  (3,1) circle (2.5pt)
 (3,2) circle (2.5pt) 
  (3,3) circle (2.5pt)
 (3,4) circle (2.5pt) 
  (3,5) circle (2.5pt)
  (4,1) circle (2.5pt)
(4,2) circle (2.5pt)
(4,3) circle (2.5pt);
\end{tikzpicture}
~~~~~~~~~~~
\begin{tikzpicture}[scale=0.5]
\draw[thick,  color=black]  
  (0,0) -- (3,0) -- (4,1) --  (3,5) -- (0,2) -- cycle;
\fill[thick,  color=red]
  (2,3) circle (2.5pt); 
\fill[thick,  color=black]
  (0,0) circle (2.5pt) 
  (0,1) circle (2.5pt)
 (0,2) circle (2.5pt)
  (1,0) circle (2.5pt)
  (1,1) circle (2.5pt)
  (1,2) circle (2.5pt)
  (1,2) circle (2.5pt)
 (1,3) circle (2.5pt)
  (2,0) circle (2.5pt) 
  (2,1) circle (2.5pt)
  (2,2) circle (2.5pt)
 (2,4) circle (2.5pt)
 (3,0) circle (2.5pt) 
  (3,1) circle (2.5pt)
 (3,2) circle (2.5pt) 
  (3,3) circle (2.5pt)
 (3,4) circle (2.5pt) 
  (3,5) circle (2.5pt)
  (4,1) circle (2.5pt) ;
\end{tikzpicture}
~~~~~~~~~~~
\begin{tikzpicture}[scale=0.5]
\draw[thick,  color=black]  
  (0,0) -- (2,1) -- (5,5) --  (3,5) -- (0,2) -- cycle;
\fill[thick,  color=red]
  (2,3) circle (2.5pt); 
\fill[thick,  color=black]
  (0,0) circle (2.5pt) 
  (0,1) circle (2.5pt)
 (0,2) circle (2.5pt)
  (1,1) circle (2.5pt)
  (1,2) circle (2.5pt)
  (1,2) circle (2.5pt)
 (1,3) circle (2.5pt)
  (2,1) circle (2.5pt)
  (2,2) circle (2.5pt)
 (2,4) circle (2.5pt)
  (3,3) circle (2.5pt)
 (3,4) circle (2.5pt) 
  (3,5) circle (2.5pt)
   (4,4) circle (2.5pt)
    (4,5) circle (2.5pt) 
     (5,5) circle (2.5pt) ;
\end{tikzpicture}
\caption{Newton polytopes of $g_1$, $g_2$, $\mut_Fg_2$ and $(\mut_E\circ \mut_F)g_2$.}
\label{fig pom1}
\end{figure}

\begin{figure}[htb]
 \begin{tikzpicture}[scale=0.5]
\draw[thick,  color=black]  
  (0,0) -- (3,0) -- (4,1) --  (4,8) -- cycle;
\fill[thick,  color=red]
  (2,3) circle (2.5pt); 
\fill[thick,  color=black]
  (0,0) circle (2.5pt) 
  (1,0) circle (2.5pt)
  (1,1) circle (2.5pt)
  (1,2) circle (2.5pt)
  (2,0) circle (2.5pt) 
  (2,1) circle (2.5pt)
  (2,2) circle (2.5pt)
 (2,4) circle (2.5pt)
 (3,0) circle (2.5pt) 
  (3,1) circle (2.5pt)
 (3,2) circle (2.5pt) 
  (3,3) circle (2.5pt)
 (3,4) circle (2.5pt) 
  (3,5) circle (2.5pt)
  (3,6) circle (2.5pt)
  (4,1) circle (2.5pt)
(4,2) circle (2.5pt)
(4,3) circle (2.5pt)
  (4,4) circle (2.5pt)
(4,5) circle (2.5pt)
(4,6) circle (2.5pt)
  (4,7) circle (2.5pt);
\end{tikzpicture}
~~~~~~~~
 \begin{tikzpicture}[scale=0.5]
\draw[thick,  color=black]  
  (0,0) -- (3,0) -- (4,1) --  (4,6) --  (0,2)  -- cycle;
\fill[thick,  color=red]
  (2,3) circle (2.5pt); 
\fill[thick,  color=black]
  (0,0) circle (2.5pt) 
   (0,1) circle (2.5pt)
    (0,2) circle (2.5pt)
  (1,0) circle (2.5pt)
  (1,1) circle (2.5pt)
  (1,2) circle (2.5pt)
   (1,3) circle (2.5pt)
  (2,0) circle (2.5pt) 
  (2,1) circle (2.5pt)
  (2,2) circle (2.5pt)
 (2,4) circle (2.5pt)
 (3,0) circle (2.5pt) 
  (3,1) circle (2.5pt)
 (3,2) circle (2.5pt) 
  (3,3) circle (2.5pt)
 (3,4) circle (2.5pt) 
  (3,5) circle (2.5pt)
  (4,1) circle (2.5pt)
(4,2) circle (2.5pt)
(4,3) circle (2.5pt)
  (4,4) circle (2.5pt)
(4,5) circle (2.5pt)
(4,6) circle (2.5pt);
\end{tikzpicture}
~~~~~~~~
 \begin{tikzpicture}[scale=0.5]
\draw[thick,  color=black]  
  (0,0) -- (3,0) -- (4,1) --  (4,4) --  (0,4)  -- cycle;
\fill[thick,  color=red]
  (2,3) circle (2.5pt); 
\fill[thick,  color=black]
  (0,0) circle (2.5pt) 
   (0,1) circle (2.5pt)
    (0,2) circle (2.5pt)
    (0,3) circle (2.5pt)
    (0,4) circle (2.5pt)
  (1,0) circle (2.5pt)
  (1,1) circle (2.5pt)
  (1,2) circle (2.5pt)
   (1,3) circle (2.5pt)
      (1,4) circle (2.5pt)
  (2,0) circle (2.5pt) 
  (2,1) circle (2.5pt)
  (2,2) circle (2.5pt)
 (2,4) circle (2.5pt)
 (3,0) circle (2.5pt) 
  (3,1) circle (2.5pt)
 (3,2) circle (2.5pt) 
  (3,3) circle (2.5pt)
 (3,4) circle (2.5pt) 
  (4,1) circle (2.5pt)
(4,2) circle (2.5pt)
(4,3) circle (2.5pt)
  (4,4) circle (2.5pt);
\end{tikzpicture}
\caption{Newton polytopes of $f$, $f_1$ and $f_2$.}
\label{fig pom2}
\end{figure}

\begin{theorem}\label{th main3}
    For every Laurent polynomial $f\in \CC[N]\cong \CC[\ZZ^2]$, the pair $$(X_{\newt(f)},\partial X_{\newt(f)})$$ is unobstructed in $\{t_m \mid m \in \cM(f)\}$.
\end{theorem}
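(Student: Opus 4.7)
The plan is to combine the two main technical tools developed in this section: Theorem \ref{main th 1 un} (unobstructedness is preserved under mutations) and Theorem \ref{th smoothable} (every $f$ is mutation equivalent to a polynomial with a particularly nice lattice point). Together they reduce the general statement to constructing a direct multi-parameter deformation in the nice case.

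First, apply Theorem \ref{th smoothable} to produce a Laurent polynomial $g$ mutation equivalent to $f$ such that there exists a lattice point $v\in \newt(g)$ with $m(v)\le 0$ for every $m\in \cM(g)$. By Lemma \ref{lem 61} the mutation sequence sets up a bijection between $\cM(f)$ and $\cM(g)$ (obtained by iterating the maps $\psi_{m}$ along the sequence), so by repeated application of Theorem \ref{main th 1 un} it suffices to show that $(X_{\newt(g)},\partial X_{\newt(g)})$ is unobstructed in $\{t_m\mid m\in \cM(g)\}$.

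For the reduction step, translate $\newt(g)$ so that $v=0$; then every $m\in \cM(g)$ satisfies $\pi_{\ZZ}(m)\le 0$. Let $G=\{0\}\subset \newt(g)$ and apply Construction \ref{cons 2} with $\cM=\cM(g)$ to obtain an explicit formal deformation $\{F'_\bfk(\bfz,\bfTT_\cM)\mid \bfk\in \NN^{\tr}\}$ of $(X_G,\partial X_G)$ over $\CC[[\bfTT_\cM]]$, together with the linear relations described in Construction~\ref{cons 2}, and whose restriction to each one-parameter subspace $\CC[[T_r]]$ agrees with the one-parameter family of Corollary \ref{cor 3.3}. Now invert the change of variables used in the proof of Theorem \ref{th main 2}: set $z_j=x_j u^{-\eta_P(c_j)}$ and $T_r=t_r u^{-\pi_{\ZZ}(r)}$. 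Because $\pi_{\ZZ}(r)\le 0$ for every $r\in \cM(g)$, this substitution produces only non-negative powers of $u$, so the resulting expressions lie in $\CC[\bfx,u][[\bft_{\cM(g)}]]$. Homogeneity of the construction of $F'_\bfk$ ensures that each resulting $F_\bfk(\bfx,u,\bft_{\cM(g)})$ reduces modulo $\bft_{\cM(g)}$ to $f_\bfk(\bfx,u)$, and that the linear relations $R'_{\bfa,\bfk}$ of Construction~\ref{cons 2} pull back to linear relations $R_{\bfa,\bfk}$ among the $F_\bfk$ lifting the $r_{\bfa,\bfk}$ of Proposition \ref{acf1}. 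By the equational flatness criterion (Lemma \ref{lem equat fla}), this gives a formal deformation of $(X_{\newt(g)},\partial X_{\newt(g)})$ over $\CC[[\bft_{\cM(g)}]]$. The restriction of this deformation to a single parameter $t_m$ matches the one-parameter family of Corollary \ref{cor 3.3} (by construction, since that is what is true before the substitution), so the Kodaira--Spencer map surjects onto $\bigoplus_{m\in \cM(g)}T^1_{(X_{\newt(g)},\partial X_{\newt(g)})}(-m)$. Since each summand is one-dimensional by Proposition \ref{t1 prop}, the map is an isomorphism onto this direct sum, verifying unobstructedness in the sense of Definition~\ref{def unobs}.

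The main obstacle is the second step, namely verifying that the back-substitution $T_r\mapsto t_r u^{-\pi_{\ZZ}(r)}$ genuinely produces an unobstructed deformation of the pair $(X_{\newt(g)},\partial X_{\newt(g)})$ and not merely of $X_{\newt(g)}$: one has to check that the lifted relations continue to involve only non-negative powers of $u$ and that the deformation parameters really hit each summand $T^1_{(X,\partial X)}(-m)$ with $m\in \cM(g)$. The first point is precisely what the non-positivity condition $\pi_{\ZZ}(m)\le 0$ guarantees, and the second follows from the already established coincidence with the Corollary \ref{cor 3.3} family on each one-parameter slice. Once Step 2 is in place, Theorem \ref{main th 1 un} applied along the chain of mutations from $g$ to $f$ transports unobstructedness to $(X_{\newt(f)},\partial X_{\newt(f)})$ in $\{t_m\mid m\in \cM(f)\}$, completing the proof.
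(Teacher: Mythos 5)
Your reduction step is exactly the paper's: mutate $f$ to a Laurent polynomial $g$ admitting a lattice point $v$ with $m(v)\le 0$ for all $m\in\cM(g)$ (Theorem \ref{th smoothable}), and transport unobstructedness back along the finite chain of mutations via Theorems \ref{th main 1}, \ref{th main 2} and Lemma \ref{lem 61}. The divergence --- and the gap --- is in the base case. The paper disposes of it cohomologically: by \cite[Corollary 5.4]{AS98}, together with $T^2_{X_{\newt(g)}}=T^2_{(X_{\newt(g)},\partial X_{\newt(g)})}$, the existence of $v$ forces $T^2_{(X_{\newt(g)},\partial X_{\newt(g)})}(-\sum_{m\in\cM(g)}k_m m)=0$ for all $k_m\in\NN$, so the first-order deformations in the degrees $-m$, $m\in\cM(g)$, extend order by order with no obstruction. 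You instead try to produce the multi-parameter family explicitly by pulling back Construction \ref{cons 2} from the point $G=\{0\}$ along $z_j=x_ju^{-\eta_P(c_j)}$, $T_r=t_ru^{-\pi_\ZZ(r)}$.

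The justification you give for why this pullback lands in $\CC[\bfx,u][[\bft_{\cM(g)}]]$ does not work. A term $\bft^{\bfj}\chi^{s_\bfk-\deg(\bft^{\bfj})}$ of a degree-$s_\bfk$ equation is well defined only if $s_\bfk-\deg(\bft^{\bfj})\in\sigma^\vee$, i.e.\ only if $\langle (w,1),s_\bfk\rangle\ge\sum_r j_r\varphi_r(w)$ for \emph{every} vertex $w$ of $\newt(g)$. The hypothesis $\pi_\ZZ(r)=\varphi_r(0)\le 0$ controls this pairing only against the distinguished lattice point $0$, which need not even be a vertex; at other vertices $w$ several of the $\varphi_r(w)$ can be simultaneously positive, and the cross terms $t_{r_1}t_{r_2}\cdots$ produced by the product $\prod_{r}(1+\chi^{-r}t_r)^{\eta_{Q_r}(\bfk)}$ in \eqref{eq fbfk2} then require inequalities that do not follow from the separate one-parameter bounds $j_r\le\eta_{Q_r}(\bfk)$ (each of those yields $\langle(w,1),s_\bfk\rangle\ge j_r\varphi_r(w)$ individually, not for the sum). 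This positivity is precisely the delicate point that the paper's $T^2$-vanishing argument circumvents, and that the conditions (i)--(ii) in the proof of Theorem \ref{th main 2} are designed to police in the \emph{opposite} direction, where an unobstructed deformation of $(X_P,\partial X_P)$ is already assumed to exist. As written, your base case therefore begs the question; replace it with the $T^2$ argument, or supply a genuine proof that all $u$-exponents of the pulled-back family are non-negative.
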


\begin{proof}
    By Theorem \ref{th smoothable} we know that any Laurent polynomial $f\in \CC[N]\cong \CC[\ZZ^2]$ is mutable to a Laurent polynomial $g$ for which there exists a lattice point $v \in \newt(g)$ such that $m(v) \leq 0$ for all $m \in \cM(g)$. By 
     \cite[Corollary 5.4]{AS98}, using that  
    $
    T^2_{X_{\newt(g)}} = T^2_{(X_{\newt(g)},\partial X_{\newt(g)})},
    $  
    we see that
    \begin{equation}\label{eq claimf}
        T^2_{(X_{\newt(g)},\partial X_{\newt(g)})} \Big(-\sum_{m\in \cM(g)} k_m m \Big) = 0, \quad \text{for every } k_m \in \NN.           
    \end{equation}
 Thus, $(X_{\newt(g)}, \partial X_{\newt(g)})$ is unobstructed in $\{ t_m \mid m \in \cM(g) \}$. By Theorems~\ref{th main 1}, \ref{th main 2}, and Lemma~\ref{lem 61}, it follows that $(X_{\newt(f)}, \partial X_{\newt(f)})$ is unobstructed in $$\{ t_m \mid m \in \cM(f) \},$$ since $g$ can be reached from $f$ by finitely many mutations. Hence, we can choose a generating set of $S_{\newt(f)}$ such that, after each mutation, the resulting generating set satisfies \eqref{eq mutmut}.
\end{proof}

\begin{definition}\label{def maxmut}
    We say that a Laurent polynomial is \emph{maximally mutable} if there does not exist a Laurent polynomial $g$ with $\cM(f) \subsetneqq \cM(g)$.  
\end{definition}

\begin{definition}\label{def 0mut}
    If $f$ is mutation equivalent to a Laurent polynomial $g$ with $\newt(g)$ a point, we say that $f$ is \emph{$0$-mutable}.
\end{definition}

\begin{remark}
    The notion that $f$ is $0$-mutable was introduced in \cite{CFP22}.  
    Note that Theorem \ref{th smoothable} in particular reproves that $f$ is rigid maximally mutable if and only if $f$ is $0$-mutable (see \cite[Theorem 3.5]{CFP22}), using only convex geometry tools. 
\end{remark}

\begin{example}\label{ex 78}
In Figure \ref{fig 5} we presented all three possible $0$-mutable Laurent polynomials that have Newton polytope equal to $P=\conv\{(0,0),(4,0),(0,5)\}$ and in Figure \ref{fig 6} we presented maximally mutable Laurent polynomial 
$$f=(1+x)^4+y(5-15x^2-10x^3)+y^2(10-12x-22x^2)+y^3(10-8x)+5y^4+y^5,$$ 
   that is not $0$-mutable and $\newt(f)=P$. The Laurent polynomial 
   $f$ is $m_1:=(0,-1,3)$-mutable and $m_2:=(-2,0,4)$-mutable and clearly it does not exists $m\in \tM\setminus \cM(f)$ and a Laurent polynomial $h$ such that 
   $$\{m_1, m_2, m\}\subset \cM(h).$$
   For $g_1=1+x$ it holds that $f_1:=\mut_{m_1}^{g_1}f$ is the second Laurent polynomial in Figure \ref{fig 6}. Finally, for $g_2=1+y$ we have that 
   $$f_2:=\mut_{m_2}^{g_2}f_1=1+x+y-12xy+xy^2+2xy^3+x^2y^5,$$
   which is the last Laurent polynomial presented in Figure \ref{fig 6}. This Laurent polynomial satisfies the property (1) of Theorem \ref{th smoothable}: indeed, the lattice point $(1,1)$ that correspond to $xy$ has the desired property.
    \begin{figure}[htb]
\begin{tikzpicture}[scale=0.8]
\draw[thick,  color=black]  
  (0,0) -- (4,0) -- (0,5) -- cycle;
\fill[thick,  color=black]
  (0,0) circle (2.5pt) 
  (0,1) circle (2.5pt)
  (0,2) circle (2.5pt)
  (0,3) circle (2.5pt)
  (0,4) circle (2.5pt)
  (0,5) circle (2.5pt)
  (1,0) circle (2.5pt)
  (1,1) circle (2.5pt)
  (1,2) circle (2.5pt)
  (1,3) circle (2.5pt)
  (2,0) circle (2.5pt) 
  (2,1) circle (2.5pt)
  (2,2) circle (2.5pt)
 (3,0) circle (2.5pt) 
  (3,1) circle (2.5pt)
  (4,0) circle (2.5pt);
     \node[anchor=south west] at (0,0) {$1$};
    \node[anchor=south west] at (0,1) {$5$};
    \node[anchor=south west] at (0,2) {$10$};
    \node[anchor=south west] at (0,3) {$10$};
     \node[anchor=south west] at (0,4) {$5$};
       \node[anchor=south west] at (0,5) {$1$};
       \node[anchor=south west] at (1,0) {$4$};
    \node[anchor=south west] at (1,1) {$12$};
    \node[anchor=south west] at (1,2) {$12$};
    \node[anchor=south west] at (1,3) {$4$};
     \node[anchor=south west] at (2,0) {$6$};
    \node[anchor=south west] at (2,1) {$12$};
    \node[anchor=south west] at (2,2) {$6$};
 \node[anchor=south west] at (3,0) {$4$};
    \node[anchor=south west] at (3,1) {$4$};
    \node[anchor=south west] at (4,0) {$1$};
\end{tikzpicture}
~~~~
\begin{tikzpicture}[scale=0.8]
\draw[thick,  color=black]  
  (0,0) -- (4,0) -- (0,5) -- cycle;
\fill[thick,  color=black]
  (0,0) circle (2.5pt) 
  (0,1) circle (2.5pt)
  (0,2) circle (2.5pt)
  (0,3) circle (2.5pt)
  (0,4) circle (2.5pt)
  (0,5) circle (2.5pt)
  (1,0) circle (2.5pt)
  (1,1) circle (2.5pt)
  (1,2) circle (2.5pt)
  (1,3) circle (2.5pt)
  (2,0) circle (2.5pt) 
  (2,1) circle (2.5pt)
  (2,2) circle (2.5pt)
 (3,0) circle (2.5pt) 
  (3,1) circle (2.5pt)
  (4,0) circle (2.5pt);
     \node[anchor=south west] at (0,0) {$1$};
    \node[anchor=south west] at (0,1) {$5$};
    \node[anchor=south west] at (0,2) {$10$};
    \node[anchor=south west] at (0,3) {$10$};
     \node[anchor=south west] at (0,4) {$5$};
       \node[anchor=south west] at (0,5) {$1$};
       \node[anchor=south west] at (1,0) {$4$};
    \node[anchor=south west] at (1,1) {$15$};
    \node[anchor=south west] at (1,2) {$20$};
    \node[anchor=south west] at (1,3) {$10$};
     \node[anchor=south west] at (2,0) {$6$};
    \node[anchor=south west] at (2,1) {$15$};
    \node[anchor=south west] at (2,2) {$10$};
 \node[anchor=south west] at (3,0) {$4$};
    \node[anchor=south west] at (3,1) {$5$};
    \node[anchor=south west] at (4,0) {$1$};
\end{tikzpicture}
~~~~
\begin{tikzpicture}[scale=0.8]
\draw[thick,  color=black]  
  (0,0) -- (4,0) -- (0,5) -- cycle;
\fill[thick,  color=black]
  (0,0) circle (2.5pt) 
  (0,1) circle (2.5pt)
  (0,2) circle (2.5pt)
  (0,3) circle (2.5pt)
  (0,4) circle (2.5pt)
  (0,5) circle (2.5pt)
  (1,0) circle (2.5pt)
  (1,1) circle (2.5pt)
  (1,2) circle (2.5pt)
  (1,3) circle (2.5pt)
  (2,0) circle (2.5pt) 
  (2,1) circle (2.5pt)
  (2,2) circle (2.5pt)
 (3,0) circle (2.5pt) 
  (3,1) circle (2.5pt)
  (4,0) circle (2.5pt);
     \node[anchor=south west] at (0,0) {$1$};
    \node[anchor=south west] at (0,1) {$5$};
    \node[anchor=south west] at (0,2) {$10$};
    \node[anchor=south west] at (0,3) {$10$};
     \node[anchor=south west] at (0,4) {$5$};
       \node[anchor=south west] at (0,5) {$1$};
       \node[anchor=south west] at (1,0) {$4$};
    \node[anchor=south west] at (1,1) {$12$};
    \node[anchor=south west] at (1,2) {$12$};
    \node[anchor=south west] at (1,3) {$4$};
     \node[anchor=south west] at (2,0) {$6$};
    \node[anchor=south west] at (2,1) {$9$};
    \node[anchor=south west] at (2,2) {$3$};
 \node[anchor=south west] at (3,0) {$4$};
    \node[anchor=south west] at (3,1) {$2$};
    \node[anchor=south west] at (4,0) {$1$};
\end{tikzpicture}
\caption{$0$-mutable Laurent polynomials.}
\label{fig 5}
\end{figure}

\begin{figure}[htb]
\begin{tikzpicture}[scale=0.8]
\draw[thick,  color=black]  
  (0,0) -- (4,0) -- (0,5) -- cycle;
\fill[thick,  color=black]
  (0,0) circle (2.5pt) 
  (0,1) circle (2.5pt)
  (0,2) circle (2.5pt)
  (0,3) circle (2.5pt)
  (0,4) circle (2.5pt)
  (0,5) circle (2.5pt)
  (1,0) circle (2.5pt)
  (1,1) circle (2.5pt)
  (1,2) circle (2.5pt)
  (1,3) circle (2.5pt)
  (2,0) circle (2.5pt) 
  (2,1) circle (2.5pt)
  (2,2) circle (2.5pt)
 (3,0) circle (2.5pt) 
  (3,1) circle (2.5pt)
  (4,0) circle (2.5pt);
     \node[anchor=south east] at (0,0) {$1$};
    \node[anchor=south east] at (0,1) {$5$};
    \node[anchor=south east] at (0,2) {$10$};
    \node[anchor=south east] at (0,3) {$10$};
     \node[anchor=south east] at (0,4) {$5$};
       \node[anchor=south east] at (0,5) {$1$};
       \node[anchor=south east] at (1,0) {$4$};
    \node[anchor=south east] at (1,1) {$0$};
    \node[anchor=south east] at (1,2) {$-12$};
    \node[anchor=south east] at (1,3) {$-8$};
     \node[anchor=south east] at (2,0) {$6$};
    \node[anchor=south east] at (2,1) {$-15$};
    \node[anchor=south east] at (2,2) {$-22$};
 \node[anchor=south east] at (3,0) {$4$};
    \node[anchor=south east] at (3,1) {$-10$};
    \node[anchor=south east] at (4,0) {$1$};
\end{tikzpicture}
~~~~~
\begin{tikzpicture}[scale=0.8]
\draw[thick,  color=black]  
  (0,0) -- (1,0) -- (2,5) -- (0,5) --cycle;
\fill[thick,  color=black]
  (0,0) circle (2.5pt) 
  (0,1) circle (2.5pt)
  (0,2) circle (2.5pt)
  (0,3) circle (2.5pt)
  (0,4) circle (2.5pt)
  (0,5) circle (2.5pt)
  (1,0) circle (2.5pt)
  (1,1) circle (2.5pt)
  (1,2) circle (2.5pt)
  (1,3) circle (2.5pt)
  (1,4) circle (2.5pt)
  (1,5) circle (2.5pt)
  (2,5) circle (2.5pt);
     \node[anchor=south east] at (0,0) {$1$};
    \node[anchor=south east] at (0,1) {$5$};
    \node[anchor=south east] at (0,2) {$10$};
    \node[anchor=south east] at (0,3) {$10$};
     \node[anchor=south east] at (0,4) {$5$};
       \node[anchor=south east] at (0,5) {$1$};
       \node[anchor=south east] at (1,0) {$1$};
    \node[anchor=south east] at (1,1) {$-10$};
    \node[anchor=south east] at (1,2) {$-22$};
    \node[anchor=south east] at (1,3) {$-8$};
      \node[anchor=south east] at (1,4) {$5$};
        \node[anchor=south east] at (1,5) {$2$};
  \node[anchor=south east] at (2,5) {$1$};
\end{tikzpicture}
~~~~~
\begin{tikzpicture}[scale=0.8]
\draw[thick,  color=black]  
  (0,0) -- (1,0) -- (2,5) -- (0,1) --cycle;
\fill[thick,  color=black]
  (0,0) circle (2.5pt) 
  (0,1) circle (2.5pt)
  (1,0) circle (2.5pt)
  (1,1) circle (2.5pt)
  (1,2) circle (2.5pt)
  (1,3) circle (2.5pt)
  (2,5) circle (2.5pt);
     \node[anchor=south east] at (0,0) {$1$};
    \node[anchor=south east] at (0,1) {$1$};
       \node[anchor=south east] at (1,0) {$1$};
    \node[anchor=south east] at (1,1) {$-12$};
    \node[anchor=south east] at (1,2) {$1$};
    \node[anchor=south east] at (1,3) {$2$};
  \node[anchor=south east] at (2,5) {$1$};
\end{tikzpicture}
\caption{Maximally mutable Laurent polynomial and its two mutations.}
\label{fig 6}
\end{figure}
\end{example}

From Theorem \ref{th main3}, we thus obtain the following flat map:  
\begin{equation}\label{eq picmf}
    \pi_{\cM(f)}: \spec R_{\cM(f)} \to \spec \CC[[t_m\mid m\in \cM(f)]],
\end{equation}
where $R_{\cM(f)}=\CC[\bfx,u][[\bft_{\cM(f)}]]/(F_\bfk(\bfx,u,\bft_{\cM(f)})\mid \bfk\in \NN^{\tilde{r}})$,  $\pi^{-1}(0) = X_{\newt(f)}$, and the restriction to $\CC[[t_m]]$ coincides with the deformation  
\begin{equation}
    \spec \CC[\bfx,u,t_m] / (F_\bfk(\bfx,u,t_m) \mid \bfk \in \NN^{\tr}) \to \spec \CC[[t_m]],
\end{equation}
from Corollary \ref{cor 3.3}.

\begin{theorem}\label{th smooth}
    The general fibre of the unobstructed deformation $\pi_{\cM(f)}$ of $X_{\newt(f)}$ is smooth if and only if $f$ is $0$-mutable.
\end{theorem}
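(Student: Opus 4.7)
The plan is to reduce both implications to a convenient representative in the mutation equivalence class of $f$, exploiting Theorem \ref{main th 1 un}, which transports the isomorphism class of the general fibre across mutations. Thus, after a finite chain of mutations, I may replace $f$ by any Laurent polynomial $g$ mutation equivalent to it and study the general fibre of $\pi_{\cM(g)}$ instead.

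For the implication ``$0$-mutable $\Rightarrow$ smooth general fibre'', I would use Definition~\ref{def 0mut} to replace $f$ by a mutation-equivalent $g$ with $\newt(g)$ a single lattice point. In that case the cone defining $X_{\newt(g)}$ is one-dimensional, so $X_{\newt(g)}$ is isomorphic to $\AA^2 \times \CC^*$ and already smooth. Since $\pi_{\cM(g)}$ is flat over the affine base $\spec\CC[[\bft_{\cM(g)}]]$ with smooth central fibre, smoothness is an open condition and propagates to a generic fibre; Theorem~\ref{main th 1 un} then transports this back to $\pi_{\cM(f)}$.

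For the converse, I argue by contrapositive. Assume $f$ is not $0$-mutable. By Theorem~\ref{th smoothable}, $f$ is mutation equivalent to some $g$ with $\newt(g)$ not a single point, admitting a lattice point $v \in \newt(g)$ such that $m(v) \leq 0$ for every $m \in \cM(g)$. It suffices (by Theorem~\ref{main th 1 un}) to exhibit a singular point in the general fibre of $\pi_{\cM(g)}$. The strategy is to identify a closed subvariety that is preserved by the total deformation family and already contains a singular point of $X_{\newt(g)}$. Concretely, the deforming equations \eqref{eq Fk} differ from $f_\bfk(\bfx,u)$ only by correction monomials $\chi^{s_\bfk - im}$ with $i \geq 0$ and $m \in \cM(g)$; because $\langle s_\bfk - im, (v,1)\rangle = \langle s_\bfk,(v,1)\rangle - i\,m(v) \geq \langle s_\bfk,(v,1)\rangle$, each correction monomial vanishes on the toric stratum defined by $\{x_i = 0 : \langle s_i,(v,1)\rangle > 0\}$. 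So this stratum is preserved (flatly) across the whole family.

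The main obstacle is to confirm that the preserved stratum actually contains a singular point of the general fibre. I would treat this by first reducing, via an additional mutation if necessary, to the case where $v$ can be taken to be a vertex of $\newt(g)$, so that $(v,1)$ is a ray generator of $\sigma_g$; since $\newt(g)$ is two-dimensional and the adjacent edges have nontrivial lattice length (any simplification would provide a further mutation ending at a point, contradicting that $f$ is not $0$-mutable), the corresponding toric orbit closure is genuinely singular. As a complementary route I would invoke the injectivity of the Kodaira--Spencer map from Theorem~\ref{thmm}: the image of $\pi_{\cM(g)}$ in $T^1_{(X_{\newt(g)},\partial X_{\newt(g)})}$ is precisely $\bigoplus_{m\in \cM(g)} T^1_{(X_{\newt(g)},\partial X_{\newt(g)})}(-m)$, and the existence of $v$ shows that the singularity supported on the $v$-stratum lies in no degree $-m$ with $m \in \cM(g)$, so it cannot be smoothed by the family.
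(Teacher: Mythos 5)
Your forward implication coincides with the paper's: by Theorems \ref{main th 1 un} and \ref{th main3} one reduces to a mutation\-/equivalent $g$ with $\newt(g)$ a single lattice point, where the toric variety is already smooth (it is $\AA^1\times(\CC^*)^2$ rather than $\AA^2\times\CC^*$, but this is immaterial), so the general fibre stays smooth. The converse, however, has a genuine gap. You correctly reduce via Theorem \ref{th smoothable} to a $g$ with a lattice point $v\in\newt(g)$ satisfying $m(v)\le 0$ for all $m\in\cM(g)$, and you correctly isolate the key inequality $\langle s_\bfk-im,(v,1)\rangle\ge\langle s_\bfk,(v,1)\rangle$. But the conclusion you extract from it --- that the toric stratum $\{x_i=0:\langle s_i,(v,1)\rangle>0\}$ persists in every fibre and is itself singular --- does not imply that the general fibre is singular: a smooth threefold can contain a singular surface or curve, and smoothness at points of the stratum is governed by the behaviour of the equations transverse to it, which your argument does not control. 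The auxiliary reduction ``via an additional mutation'' to the case where $v$ is a vertex is also unjustified (Theorem \ref{th smoothable} only yields a lattice point, and nothing guarantees a further mutation makes it a vertex while preserving $m(v)\le0$ for all $m$), and the appeal to injectivity of the Kodaira--Spencer map is only a first-order statement, so it cannot exclude smoothing at higher order.

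What the paper does with the same inequality is a Jacobian computation at a specific point rather than a statement about a preserved subvariety. It picks the two generators $h_1,h_2$ of the chosen generating set with smallest value at $v$ and shows that no $F_\bfk$ can contain a monomial $x_1T$, $x_2T$ or $uT$ with $T$ a product of deformation parameters: such a monomial has $\tM$-degree $s_\bfk$, so evaluating at $(v,1)$ gives $h_i(v)=s_\bfk(v)-\deg(T)(v)\ge s_\bfk(v)>k:=\max\{h_1(v),h_2(v)\}\ge h_i(v)$, a contradiction, since every $t_m$ has non-positive degree at $(v,1)$. Consequently the deformed equations never acquire the linear terms needed to cut down the embedding dimension at the torus-fixed point, and the general fibre remains singular there. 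To complete your proof you would need to replace the ``preserved stratum'' step by such a tangent-space argument at an explicit point.
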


\begin{proof}
    If $f$ is $0$-mutable, then the general fibre is smooth by Theorems \ref{main th 1 un} and \ref{th main3}.  

    In the other direction, by the same theorems, we need to show that the general fibre of the unobstructed deformation of $X_{\newt(g)}$ over  
    $
    \CC[[t_m \mid m \in \cM(g)]]
    $ is singular, 
    where $g$ is such that there exists a lattice point $v \in G:=\newt(g)$ satisfying  
    $
    m(v) \leq 0, \text{for all } m \in \cM(g).
    $  
    Let $H_G := \{\s_1, \dots, s_{\tr}, R^*\}$ be a generating set of $S_G$, and choose  
    $$
    h_1, h_2 \in H_G
    $$  
    such that for any other element $h \in \{\s_1, \dots, s_{\tr}\}$, with $h \neq h_i$, $i = 1,2$, it holds that  
    $$
    h(v) \geq k := \max\{h_1(v), h_2(v)\}.
    $$  

For any equation $F_\bfk$, we observe that it is not possible to have a monomial of the form $x_1 T$, $x_2 T$, or $u T$, where  
$$
T = a\cdot \prod_{m \in \cM(g)} t_m^{n_m}
$$  
is a product of deformation parameters for some $n_m \in \NN$, and where $x_i$ corresponds to $h_i$ for $i = 1,2$, and $a\in \CC$.
 Indeed, this is not possible due to degree considerations, since $s_\bfk(v) > k$ and every deformation parameter $t_m$ has a non-positive value on $v$, i.e.,  
    $
    m(v) \leq 0.
    $  
\end{proof}

For every $m \in \cE(P)$, we choose a line segment $Q \subset (m=0)$ of lattice length $1$ and denote by $P_m := P_{(m,Q)}$ the mutation polytope.  
Moreover, we define  
$$
\mut_{m_2} \circ \mut_{m_1}(P) := \mut_{m_2} P_{m_1} = (P_{m_1})_{m_2},
$$  
and similarly for  
$$
\mut_{m_n} \circ \cdots \circ \mut_{m_1}(P).
$$  
We write $(P, \cM) \sim (\tilde{P}, \tilde{\cM})$ if there exist $m_i \in \cM_i$ for $i = 1, \dots, n$, where $\cM_i := \psi_{m_i}(\cM_{i-1})$ with $\cM_0 := \cM$, such that  
$$
\mut_{m_n} \circ \cdots \circ \mut_{m_1}(P) = \tilde{P} \quad \text{and} \quad \cM_n = \tilde{\cM}.
$$  

\begin{lemma}\label{lem 710}
    If $(\tilde{P},\tilde{\cM}) \sim (P,\cM)$, such that $\tilde{P}$ is not $m$-mutable for some $m \in \tilde{\cM}$. Then $X_P$ is not unobstructed in $\{t_m\mid m\in \cM\}$.
\end{lemma}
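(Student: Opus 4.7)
The plan is to argue by contradiction via an iterated application of Theorem~\ref{main th 1 un}. Suppose for contradiction that $X_P$ is unobstructed in $\bft_\cM$. The relation $(P,\cM)\sim(\tilde P,\tilde\cM)$ furnishes a sequence of mutation directions $m_1,\dots,m_n$ together with intermediate pairs $(P_i,\cM_i)$, where $P_0=P$, $\cM_0=\cM$, $P_i=\mut_{m_i}(P_{i-1})$ and $\cM_i=\psi_{m_i}(\cM_{i-1})$, realising $(P_n,\cM_n)=(\tilde P,\tilde\cM)$.

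I would then apply Theorem~\ref{main th 1 un} inductively along this chain. At step~$i$, the unobstructedness of $X_{P_{i-1}}$ in $\bft_{\cM_{i-1}}$ implies the unobstructedness of $X_{P_i}$ in $\bft_{\cM_i}=\bft_{\psi_{m_i}(\cM_{i-1})}$; here Lemma~\ref{lem 61} ensures that $\psi_{m_i}$ is a bijection $\cE(P_{i-1})\to\cE(P_i)$ carrying $\cM_{i-1}$ onto $\cM_i$, so that the deformation index sets match as required by the hypothesis of Theorem~\ref{main th 1 un}. After $n$ iterations one obtains that $X_{\tilde P}$ is unobstructed in $\bft_{\tilde\cM}$.

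The contradiction then comes directly from Definition~\ref{def unobs}, which requires the index set of an unobstructed deformation to lie inside $\cE(\tilde P)$. The hypothesis of the lemma provides $m\in\tilde\cM$ with $\tilde P$ not $m$-mutable, i.e.\ $m\notin\cE(\tilde P)$; hence $\tilde\cM\not\subset\cE(\tilde P)$, which is incompatible with $X_{\tilde P}$ being unobstructed in $\bft_{\tilde\cM}$. This completes the argument.

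The main point requiring care is purely notational: one must verify that each mutation direction $m_i$ used in Theorem~\ref{main th 1 un} genuinely belongs to the current deformation index set $\cM_{i-1}$, so that the theorem applies. In the formulation of $\sim$ this is encoded by $m_i\in\cM_i$, and using $\psi_{m_i}(m_i)=-m_i$ from~\eqref{psimr} together with the bijectivity of the $\psi$-maps from Lemma~\ref{lem 61} one checks that membership of $m_i$ in $\cM_i$ corresponds (up to the sign swap built into $\psi$) to membership in $\cM_{i-1}$. Beyond this bookkeeping, no new ingredients are needed: the content of the lemma is really that unobstructedness is mutation-invariant (Theorem~\ref{main th 1 un}) combined with the structural constraint built into Definition~\ref{def unobs}.
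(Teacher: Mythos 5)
Your argument is correct and takes essentially the same route as the paper, whose entire proof is the single sentence that the lemma follows immediately from Theorem~\ref{main th 1 un}; you have simply made explicit the induction along the mutation chain (via Lemma~\ref{lem 61}) and located the final contradiction in the requirement $\tilde{\cM}\subset\cE(\tilde{P})$ built into Definition~\ref{def unobs}. Nothing in your write-up diverges from the intended argument.
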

\begin{proof}
    This follows immediately from Theorem \ref{main th 1 un}.
\end{proof}

As a corollary we get the following theorem.

\begin{theorem}\label{th dis kon}
     If $f$ is maximally mutable and $\cM$ is such that $\cM(f) \subsetneqq \cM\subset \cE(\newt(f))$, then $X_{\newt(f)}$ is not unobstructed in $\{t_m\mid m\in \cM\}$. 
\end{theorem}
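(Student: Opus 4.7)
I argue by contradiction. Suppose $X_{\newt(f)}$ is unobstructed in $\{t_m \mid m \in \cM\}$ with $\cM(f) \subsetneqq \cM \subseteq \cE(\newt(f))$, and fix $m^* \in \cM \setminus \cM(f)$. The plan is to pass to a convenient normal form via mutations, and then extract from the extended deformation a Laurent polynomial with mutability set strictly larger than $\cM(f)$, contradicting the maximal mutability of $f$.

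For the reduction, I follow the proof of Theorem \ref{th smoothable} to construct a mutation chain $f = f_0, f_1 = \mut_{m_1}f_0, \ldots, f_k = g$ with each $m_i \in \cM(f_{i-1})$, such that $g$ is minimal and admits a lattice point $v \in \newt(g)$ satisfying $m(v) \leq 0$ for every $m \in \cM(g)$. By Lemma \ref{lem 61} each $\psi_{m_i}$ restricts to a bijection $\cM(f_{i-1}) \to \cM(f_i)$; by Theorem \ref{main th 1 un} unobstructedness is preserved at each step. Setting $\cM_0 := \cM$ and $\cM_i := \psi_{m_i}(\cM_{i-1})$, I obtain unobstructedness of $X_{\newt(g)}$ in $\{t_m \mid m \in \cM_k\}$ with $\cM(g) \subsetneqq \cM_k$, the strict containment being witnessed by $m^{**} := (\psi_{m_k}\circ\cdots\circ\psi_{m_1})(m^*)\in \cM_k \setminus \cM(g)$.

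The remaining step is to produce, from the unobstructed deformation of $X_{\newt(g)}$ over $\CC[[\bft_{\cM_k}]]$, a Laurent polynomial $h$ with $\cM(h) \supseteq \cM_k$. By Theorem \ref{th main 2} the deformation can be put in $t_m$-mutable form for each $m \in \cM_k$, and the explicit combinatorial data of Construction \ref{cons 2} tie the deformation parameters to line segments $Q_r$ for each $r \in \cM_k$; specializing the parameters to generic complex values and assembling these factors should yield such an $h$. Since $-m_i \in \cM(f_i) \subseteq \cM(h)$ at each stage, the Laurent polynomial $h$ is successively $-m_k, -m_{k-1}, \ldots, -m_1$-mutable, and applying these inverse mutations yields a Laurent polynomial $h'$ with $\cM(h')\supseteq \cM \supsetneqq \cM(f)$, contradicting the maximal mutability of $f$. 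The main obstacle is precisely this rigorous extraction of $h$ from the formal deformation: one must verify that the specialization of the multi-parameter family of Construction \ref{cons 2} defines a genuine Laurent polynomial whose mutability set contains all of $\cM_k$, faithfully translating the combinatorial data of the deformation into a Laurent polynomial structure. If this extraction proves delicate, an alternative is to complete the argument using Lemma \ref{lem 710}: iterating further mutations within $\cM_k$ should eventually force a polytope in the mutation-equivalence class of $\newt(g)$ to be non-mutable in a direction belonging to the transported set, which by Lemma \ref{lem 710} contradicts unobstructedness directly.
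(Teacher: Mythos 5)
Your overall instinct---transport the problem along a mutation chain and play Lemma~\ref{lem 710} against the maximal mutability of $f$---is in the right spirit, but the step you yourself flag as ``the main obstacle'' is a genuine gap, and it is not how the paper closes the argument. The paper never extracts a Laurent polynomial from the formal deformation. Instead it runs the mutation procedure of Theorem~\ref{th smoothable} on the \emph{pair} $(\newt(f),\cM)$ (with the full set $\cM$, including the extra direction $r$) and obtains a clean dichotomy: either (1) some polytope $\tilde P$ in the chain fails to be $m$-mutable for some $m$ in the transported set $\tilde\cM$, or (2) the chain reaches a $\tilde P$ with a lattice point $v$ satisfying $m(v)\le 0$ for \emph{all} $m\in\tilde\cM$. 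Case (2) is then excluded by a purely combinatorial construction: subdivide $\tilde P$ into the triangles $T_E=\conv\{v,E\}$ over its edges and choose coefficients on each $T_E$ so that the resulting Laurent polynomial $g$ is $m^E_{n,k}$-mutable for every $m^E_{n,k}\in\tilde\cM$; pulling $g$ back along the inverse mutations produces $h$ with $\newt(h)=\newt(f)$ and $\cM\subseteq\cM(h)\supsetneqq\cM(f)$, contradicting maximal mutability. Hence case (1) must occur, and Lemma~\ref{lem 710} gives non-unobstructedness directly. Note the logical structure: maximal mutability is used \emph{unconditionally} to kill case (2); the deformation hypothesis only enters through Lemma~\ref{lem 710} in case (1).

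Two further problems with your primary route. First, your chain uses only directions $m_i\in\cM(f_{i-1})$, so the terminal point $v$ satisfies $m(v)\le 0$ only for $m\in\cM(g)$, with no control over the transported extra element $m^{**}$; the setup you reach is therefore not the one from which a Laurent polynomial realizing all of $\cM_k$ could be built, even combinatorially. Second, specializing the parameters of Construction~\ref{cons 2} to complex values produces a fibre of the deformation, not a Laurent polynomial with prescribed mutability set, and there is no mechanism in the paper for reversing that passage. Your ``alternative'' ending is much closer to the actual proof, but it asserts without justification that iterating mutations must eventually hit a non-mutable direction; that assertion is precisely what the triangulation construction above supplies, so it cannot be taken for granted.
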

\begin{proof}
    Let $f$ be maximally mutable and $m\in \cM:=\cM(f)\cup \{r\}$ for some $r\in \cE(\newt(f))\setminus \cM(f)$. From the proof of Theorem \ref{th smoothable}, we easily see that $(\newt(f),\cM)\sim (\tilde{P},\tilde{\cM})$, 
    such that one of the following holds: 
\begin{enumerate}
    \item there exists $m\in \tilde{\cM}$ such that $\tilde{P}$ is not $m$-mutable;
    \item there exists a lattice point $v\in \tilde{P}$ such that $m(v)\leq 0$ for all $m\in \tilde{\cM}$. 
\end{enumerate} 
    We will show that (2) is not possible by proving that $\tilde{\cM}\subset \cM(g)$, for some Laurent polynomial $g$ with $\newt(g)=\tilde{P}$, which contradicts the assumption that $f$ is maximally mutable. 
If such a point $v$ exists, we can subdivide $\tilde{P}$ into triangles
$$
T_E = \operatorname{conv}\{v, E \mid E \text{ is an edge of } \tilde{P}\}.
$$
For every edge $E$ of $\tilde{P}$, we can clearly choose values at the lattice points on $T_E$, which yields a Laurent polynomial $g$ with $\newt(g) = \tilde{P}$  
that is $m^E_{n,k}$-mutable for every $m^E_{n,k} \in \tilde{\mathcal{M}}$. Note that there is no issue in choosing values on the intersection $T_E \cap T_F$ for two edges $E$ and $F$ with nontrivial intersection,  
and thus we have constructed a Laurent polynomial 
 $g$ with $\newt(g) = \tilde{P}$ that is $m$-mutable for every $m\in \tilde{\mathcal{M}}$.
\end{proof}

Thus, we conclude the proof of Theorem \ref{thmm} from the introduction.

\section{The miniversal components}\label{comp sec}

Let $X=X_P$ be three-dimensional affine Gorenstein toric variety.
Proposition \ref{t1 prop} implies that 
 $$T^1_{\paar}=\bigoplus_{k\in \NN}T^1_{\paar}(-kR^*)\bigoplus_{m\in \cT} T^1_{\paar}(-m),$$
where $\dim_\CC T^1_{\paar}(-m)=1$, if $$m\in \cT:=\{m\in \tM\mid \text{if $P$ is $m$-mutable and } \max_{v \in P} \varphi_m(v) \geq 1\}.$$

In \cite{Fil23}, we analysed deformations induced by the elements of $$\bigoplus_{k \in \mathbb{N}} T^1_{\paar}(-k R^*).$$ These deformations are related to Minkowski decompositions of $P$, as we will recall below. On the other hand, by Theorem \ref{thmm}, we see that the deformations induced by the elements of $\bigoplus_{m \in \cT} T^1_{\paar}(-m)$ are connected to Laurent polynomials. In this section, we show how to combine these two types of deformations.

\subsection{The Cayley cone}\label{subsec 82}

Let us fix a Minkowski decomposition $P = P_1 + \cdots + P_m$ and let  
$\widetilde{\sigma}$ be the cone over the Cayley polytope  
$
P_1 * \cdots * P_m.
$  
More precisely, $\widetilde{\sigma}$ is generated by  
\begin{equation}\label{eq cayley}
    \{(P_1,e_1), (P_2,e_2), \dots, (P_m,e_m)\} \subset (N \oplus \ZZ^m)_{\RR},
\end{equation}
where $e_1, \dots, e_m$ is the standard basis of $\ZZ^m$ and  
$
(P_i, e_i) := \{(a, e_i) \mid a \in P_i\}.
$  

\newcommand{\cay}{S_{\operatorname{cay}}}

For $i \in \{1, \dots, m\}$ and $c \in M_\RR$, we choose a vertex $v(c)_i$ of $P_i$ where $\langle c, \cdot \rangle$ achieves its minimum.  
As we defined $\eta(c)$ for $c \in M$ in Definition \ref{d:eta(c)}, we now define  
$$
\eta_i(c) := -\min_{v\in P_i} \langle v, c \rangle = -\langle v(c)_i, c \rangle \in \ZZ.
$$  
The generators of  
$$
\cay := \widetilde{\sigma}^\vee \cap (M \oplus \ZZ^m)
$$  
are given by  
$$
(c_1, \eta_1(c_1), \dots, \eta_m(c_1)), \dots, (c_r, \eta_1(c_r), \dots, \eta_m(c_r)), (0, e_1), \dots, (0, e_m) \in M \oplus \ZZ^m,
$$  
where $c_1, \dots, c_r$ are as in \eqref{eg hilbbas}.  
For $\bfk \in \NN^r$, we define  
\begin{equation}\label{eq fkxz}
F_\bfk(\bfx, \bfz) := \bfx^\bfk - \bfx^{\partial(\bfk)} \prod_{i=1}^m z_i^{\eta_i(\bfk)} \in \CC[\bfx, \bfz] := \CC[x_1, \dots, x_r, z_1, \dots, z_m].
\end{equation}
Clearly,  
$$
\sum_{i=1}^m \eta_i(c) = \eta(c).
$$  

After substituting $z_i$ with $u + Z_i$ in $F_\bfk(\bfx, \bfz)$ (for $i = 1, \dots, m$) and denoting the resulting polynomial by $F_\bfk(\bfx, u, \bfZ)$,  
we obtain a flat map  
\begin{equation}\label{eq def cay 2}
    \tilde{\pi}: X_{P_1*\cdots *P_m}:=\spec \CC[\bfx, u, \bfZ] / (F_\bfk(\bfx, u, \bfZ) \mid \bfk \in \NN^r) \to \spec \CC[[\bfZ]],
\end{equation}
with fibre over $0$ equal to $X$, hence $\tilde{\pi}$ is a deformation of $X$. Indeed, we can immediately verify that \eqref{eq def cay 2} defines a flat map by introducing the relations  
    $$
    R_{\bfa, \bfk}(\bfx,u,\bfz) := F_{\bfa + \bfk}(\bfx,u,\bfz) - \bfx^\bfa F_\bfk(\bfx,u,\bfz) - \prod_{i=1}^m z_i^{\eta_i(\bfk)} F_{\bfa + \partial(\bfk)}(\bfx,u,\bfz),
    $$  
    and observing that replacing $z_i$ with $u + Z_i$ lifts $r_{\bfa, \bfk}$.

Let $\tilde{r}:=(r, r_1, \dots, r_m) \in M \oplus \mathbb{Z}^m$. 
If a Laurent polynomial $f$ is decomposable, say $f = f_1 \cdots f_m$, where $f_i \in \mathbb{C}[N]$, then we say that $f$ is \emph{$(r, r_1, \dots, r_m)$-mutable} if each $f_i$ is \emph{$(r, r_i)$-mutable} for all $i = 1, \dots, m$. In this case we have a one-parameter deformation of affine Gorenstein toric variety $X_{\newt(f_1)*\cdots*\newt(f_m)}$, given by a deformation pair $(\tr,Q)$, where $Q\subset (r=0)\cap N$ is a line segment of lattice length $1$. The corresponding deformation parameter we denote by $t_{\tr}$.

    \begin{proposition}\label{prop 0mutcomp}
     Let $f=f_1\cdots f_m$ be $0$-mutable Laurent polynomial. There exists a formal deformation $\{F_{\bfk}(\bfx,\bfz,\bft)\mid \bfk\in \NN^r\}$ of $X_{\newt(f_1)*\cdots*\newt(f_m)}$ over 
     $$
    \CC[[t_{(r,r_1,\dots,r_m)} \mid f\text{ is }(r,r_1,\dots,r_m)\text{-mutable}]].
    $$
         \end{proposition}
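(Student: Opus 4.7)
The plan is to mirror, on the Cayley toric variety $\tilde{X}:=X_{\newt(f_1)*\cdots*\newt(f_m)}$, the same two-step argument that established Theorem~\ref{thmm} for $X_{\newt(f)}$. The key observation is that Cayley summation is compatible with the mutation of Laurent polynomials $f = f_1 \cdots f_m$: a mutation of $f$ factors through mutations of the individual $f_i$, which in turn lift to a mutation of the Cayley polytope $\tilde{P} := \newt(f_1)*\cdots*\newt(f_m) \subset N \oplus \ZZ^m$.

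First, for each tuple $\tilde{r} = (r,r_1,\dots,r_m)$ such that $f$ is $(r,r_1,\dots,r_m)$-mutable, I would produce a deformation pair $(\tilde{r},\tilde{Q})$ of $\tilde{P}$. The hypothesis means each $f_i$ is $(r,r_i,g_i)$-mutable for some $g_i \in \CC[N]$ with $\newt(g_i) \subset (\pi_M(r)=0)$ a lattice segment of length one. Lifting these segments to heights $e_i$ inside $(\varphi_{\tilde{r}} = 0) \cap (N \oplus \ZZ^m)$ and taking their Cayley-like union, one obtains a lattice polytope $\tilde{Q}$ whose dilations $k\tilde{Q}$ become Minkowski summands of the sections $\tilde{P} \cap (\varphi_{\tilde{r}}=k)$, verifying Definition~\ref{def def pair}. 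The explicit one-parameter formal deformation of $\tilde{X}$ over $\CC[[t_{\tilde{r}}]]$ then follows immediately from Corollary~\ref{cor 3.3}, which applies in arbitrary dimension.

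Second, I would combine these one-parameter families into a formal deformation over $\CC[[\bft]]$ by adapting Theorems~\ref{th main 1} and~\ref{th main 2}. The maps $\psi_{(m,g)}$ and $\xi_{(m,g)}$ carry directly over to $M \oplus \ZZ^m \oplus \ZZ$ via the Cayley structure: since the Cayley polytope of the mutated factors $\mut^{g_i}_{(r,r_i)}f_i$ is exactly the mutation of $\tilde{P}$ by $(\tilde{r},\tilde{Q})$, the bijection of Lemma~\ref{st lem} lifts to the Cayley monoid. With these pieces in place, the $t_{\tilde{r}}$-mutability analysis from Definition~\ref{mmutable def} and the proofs of Theorems~\ref{th main 1}, \ref{th main 2} are formal and transfer verbatim. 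To conclude unobstructedness (the analog of Theorem~\ref{th main3}), I would use the hypothesis that $f$ is $0$-mutable: a $0$-mutable decomposable Laurent polynomial mutates, factor by factor, to a Laurent polynomial $g=g_1\cdots g_m$ whose Newton polytope admits a lattice vertex $v$ with $m(v)\leq 0$ for every $m\in \cM(g)$, as in Theorem~\ref{th smoothable}. Lifting $v$ to $(v,e_i) \in \tilde{P}$ then gives the vanishing $T^2_{(\tilde{X},\partial\tilde{X})}(-\sum k_{\tilde{r}}\tilde{r})=0$ by \cite[Corollary 5.4]{AS98}.

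The main obstacle is verifying that each step of the mutation-equivalence reduction from Theorem~\ref{th smoothable} can be performed while preserving the factorization $f = f_1 \cdots f_m$; that is, at every stage the mutated Laurent polynomial remains a product of $m$ Laurent polynomials whose Newton polytopes assemble into a Cayley polytope. This compatibility should follow because the mutation $\mut^g_m$ with $g = g_1\cdots g_m$ applied to the product factors through separate mutations $\mut^{g_i}_{(r,r_i)}f_i$, but bookkeeping the Cayley indices through the reduction procedure of Theorem~\ref{th smoothable} is where most of the technical effort will lie.
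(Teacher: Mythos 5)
Your overall strategy is the same as the paper's: lift everything to the Cayley polytope, transport the one-parameter deformations and the mutation machinery of Theorems \ref{th main 1} and \ref{th main 2} to $M\oplus\ZZ^m\oplus\ZZ$, and conclude unobstructedness from a $T^2$-vanishing after mutating to a terminal object. The paper implements this by placing a single Laurent polynomial $\tilde f$ on the Cayley polytope (with the coefficients of $f_i$ on the slice $\newt(f_i)\times\{e_i\}$) and, using $0$-mutability, mutating $\tilde f$ via degrees $(r,r_1,\dots,r_m)$ until each slice collapses to a lattice point; every remaining admissible degree then pairs nonpositively with each generator of the resulting cone, and \cite[Corollary 5.4]{AS98} applies.

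The one genuine soft spot in your write-up is the final step. You reduce to the endpoint of Theorem \ref{th smoothable} --- a lattice point $v\in\newt(g)$ with $m(v)\le 0$ for all $m\in\cM(g)$ --- and then ``lift $v$ to $(v,e_i)\in\tilde P$''. But $v$ is a lattice point of the Minkowski sum $\newt(g_1)+\cdots+\newt(g_m)$, not of any single slice $\newt(g_i)$, so $(v,e_i)$ is in general not a lattice point of the Cayley polytope at all. More importantly, the vanishing you need is governed by the per-factor quantities $\lan r,v_i\ran+r_i$ at a point $(v_i,e_i)$ of the $i$-th slice, whereas Theorem \ref{th smoothable} only controls the aggregate $\lan r,v\ran+\sum_{i}r_i$; nonpositivity of the sum does not give nonpositivity of each summand. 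The repair is to use the full strength of the hypothesis, which is exactly what the paper does: since $f$ is $0$-mutable, one mutates factor by factor until every $\newt(g_i)$ is a single point $n^{(i)}$, and then every degree $(r,r_1,\dots,r_m)$ for which the mutated Cayley object is still mutable automatically satisfies $\lan r,n^{(i)}\ran+r_i\le 0$ for every $i$, which is the per-factor condition needed for the $T^2$-vanishing. With that adjustment (and granting the transfer of Theorems \ref{th main 1}--\ref{th main 2} to the Cayley setting, which the paper likewise asserts rather than reproves, and whose factorization-preservation issue you correctly flag as the main technical burden), your argument coincides with the paper's.
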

\begin{proof}
Let $\tilde{f}$ be the Laurent polynomial with $\newt(\tilde{f}) = \newt(f_1) * \cdots * \newt(f_m)$, where the coefficients on $\newt(f_i)$ are given by $f_i$. 

Because $f$ is $0$-mutable, we can mutate $\tilde{f}$ via mutations coming from $$(r, r_1, \dots, r_m) \in M \oplus \mathbb{Z}^m$$ to obtain $\tilde{g}$, where $\newt(\tilde{g})$ corresponds to some lattice point $$(n, n_1, \dots, n_m) \in N \oplus \mathbb{Z}^m.$$
Moreover, $X_{\newt(\tilde{g})}$ is unobstructed in
$$
\{ t_{\tilde{r}} \mid \tilde{r} = (r, r_1, \dots, r_m) \in M \oplus \mathbb{Z}^m, ~ \langle (r, r_i), (n, n_i) \rangle = 0 \text{ for every } i = 1, \dots, m \}.
$$
We conclude the proof using the same techniques as in the proofs of Theorems \ref{th main 1} and \ref{th main 2}.
\end{proof}

Note that if $f=f_1\cdots f_m$ is $(r,r_1,\dots,r_m)$-mutable, then $f$ is $(r,\sum_{i=1}^mr_i)$-mutable in the usual sense, i.e.\ in the sense of Definition \ref{def m mut}. Conversely, if $f$ is $(r,p)$-mutable with $(r,p)\in M\oplus \ZZ$, then there exists $p_1,\dots,p_m\in \ZZ$, with $\sum_{i=1}^mp_i=p$, such that $f$ is $(r,p_1,\dots,p_m)$-mutable and $f_i$ is $(r,p_i)$-mutable.

\begin{corollary}\label{cor sec 8}
     Let $f=f_1\cdots f_m$ be $0$-mutable Laurent polynomial. There exists a formal deformation of $X_{\newt(f)}$ over $\CC[[\bfZ,t_{(r,\sum_{i=1}^mr_i)}\mid \text{$f$ is $(r,r_1,\dots,r_m)$-mutable}]]$.
\end{corollary}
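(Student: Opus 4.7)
The plan is to combine the mutation-based deformation from Proposition~\ref{prop 0mutcomp} with the Cayley-style substitution $z_i \mapsto u + Z_i$ appearing in~\eqref{eq def cay 2}. First I would invoke Proposition~\ref{prop 0mutcomp} to obtain a formal deformation $\{F_\bfk(\bfx,\bfz,\bft) \mid \bfk \in \NN^r\}$ of the Cayley variety $X_{\newt(f_1) * \cdots * \newt(f_m)}$ over $\CC[[\bft]]$, where $\bft = \{t_{(r,r_1,\dots,r_m)}\}$ ranges over all tuples for which $f$ is $(r, r_1, \dots, r_m)$-mutable. Each equation reduces to~\eqref{eq fkxz} when $\bft=0$, and is graded with respect to the $(M \oplus \ZZ^m)$-grading of $\CC[\cay]$.

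Next, I would substitute $z_i \mapsto u + Z_i$ for $i = 1, \dots, m$ in every $F_\bfk$ to obtain polynomials $\widetilde F_\bfk(\bfx, u, \bfZ, \bft) \in \CC[\bfx,u][[\bfZ,\bft]]$. Setting $\bfZ = \bft = 0$ recovers the defining equations $f_\bfk(\bfx,u)$ of $X_{\newt(f)}$, because collapsing the $\ZZ^m$-factor via $(n,e_i) \mapsto (n,1)$ projects the Cayley cone onto $\sigma$. This exhibits $\widetilde F_\bfk$ as a candidate multi-parameter deformation of $X_{\newt(f)}$ over $\CC[[\bfZ,\bft]]$.

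The main work will be verifying flatness after the substitution. Following Construction~\ref{cons 1} (adapted to the Cayley setting), the linear relations among the $F_\bfk(\bfx,\bfz,\bft)$ have the form
\[
R_{\bfa,\bfk}(\bfx,\bfz,\bft) = F_{\bfa+\bfk} - \bfx^\bfa F_\bfk - \textstyle\prod_{i=1}^m z_i^{\eta_i(\bfk)} F_{\bfa+\partial(\bfk)} - o_{\bfa,\bfk}(\bfx,\bfz,\bft),
\]
with $o_{\bfa,\bfk}$ a $\CC[[\bft]]$-linear combination of the $F_\bullet$ capturing the mutation corrections. After substituting $z_i \mapsto u+Z_i$, the factor $\prod_i z_i^{\eta_i(\bfk)}$ expands via the binomial theorem into a polynomial in $u$ and $\bfZ$, while $o_{\bfa,\bfk}$ pulls back to a $\CC[[\bfZ,\bft]]$-linear combination of the $\widetilde F_\bullet$. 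This is the same mechanism that gives flatness for each factor separately (the pure Cayley case~\eqref{eq def cay 2} and the pure mutation case of Proposition~\ref{prop 0mutcomp}), and the two act in essentially orthogonal lattice directions of $M \oplus \ZZ^m$, so no cross terms can obstruct a lift.

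Finally, I would re-index the mutation parameters. The substitution $z_i \mapsto u+Z_i$ specializes the $(M \oplus \ZZ^m)$-grading to the $\tM = M \oplus \ZZ$-grading via the diagonal projection $(r,r_1,\dots,r_m) \mapsto (r, \sum_i r_i)$. Under this identification, the parameter $t_{(r,r_1,\dots,r_m)}$ acquires $\tM$-degree $(r, \sum_i r_i)$ in the deformation of $X_{\newt(f)}$, yielding the claimed base ring. The main obstacle is Step~3: careful bookkeeping of the binomial expansions against the mutation correction series $o_{\bfa,\bfk}$, so that the combined expression $\widetilde R_{\bfa,\bfk}$ remains a genuine $\CC[\bfx,u][[\bfZ,\bft]]$-linear relation among the $\widetilde F_\bullet$. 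I expect this to reduce to the same degree-tracking argument as in Proposition~\ref{pro main pro}, now executed in the enlarged lattice $M \oplus \ZZ^m$.
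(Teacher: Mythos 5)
Your proposal is correct and follows essentially the same route as the paper: invoke Proposition~\ref{prop 0mutcomp}, substitute $z_i \mapsto u + Z_i$, and re-index $t_{(r,r_1,\dots,r_m)}$ as $t_{(r,\sum_i r_i)}$. The paper's own proof is a two-line "we easily see" argument; your Step~3 merely spells out the flatness bookkeeping that the paper leaves implicit.
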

\begin{proof}
    Let $\{F_{\bfk}(\bfx,\bfz,\bft)\mid \bfk\in \NN^r\}$ be the formal deformation of $X_{\newt(f_1)*\cdots*\newt(f_m)}$ constructed in Proposition \ref{prop 0mutcomp}. After substituting $z_i$ with $Z_i+u$ and $t_{(r,r_1,\dots,r_m)}$ with $t_{(r,\sum_{i=1}^mr_i)}$, we easily see that we obtain a formal deformation of $X_{\newt(f)}$ over 
    $$\CC[[\bfZ,t_{(r,\sum_{i=1}^mr_i)}\mid \text{$f$ is $(r,r_1,\dots,r_m)$-mutable}]]=\CC[[\bfZ,t_{s}\mid \text{$s\in \cM(f)$}]].$$
\end{proof}

\begin{remark}
Note that the assumption that $f = f_1 \cdots f_m$ is $0$-mutable is crucial in the construction. This can be generalised to the case where one of the $f_i$, for $i = 1, \dots, m$, is arbitrary, and the others are $0$-mutable. We do not know how to prove that there exists a formal deformation of $X_{\newt(f_1)*\cdots* \newt(f_m)}$ over
$$
\mathbb{C}[[t_{(r, r_1, \dots, r_m)} \mid f \text{ is } (r, r_1, \dots, r_m)\text{-mutable}]],
$$
without assuming the above conditions on $f$, even though we believe the statement should hold.
\end{remark}

\begin{definition}
    We say that two maximally mutable irreducible Laurent polynomials $f$ and $g$ are \emph{deformation equivalent} if $\cM(f) = \cM(g)$. 
    More generally, two maximally mutable Laurent polynomials $f$ and $g$ are \emph{deformation equivalent} if they decompose as
    $$
    f = \prod_{i=1}^{n} f_i, \quad g = \prod_{i=1}^{n} g_i,
    $$
    where each $f_i$ and $g_i$ are irreducible, and $f_i$ is deformation equivalent to $g_i$ for all $i = 1, \dots, n$.
\end{definition}

\begin{remark}\label{conj rem}
If we have a deformation problem controlled by a differential graded Lie algebra  
$\mathfrak{g}$ with $\dim_\CC H^1(\mathfrak{g}),\dim_\CC H^2(\mathfrak{g})<\infty$, the solution of the Maurer-Cartan equation gives us a miniversal base space of the form $\spec \CC[[\bft]]/I$, where $\bft$ is a basis of $H^1(\mathfrak{g})$ (see \cite[Section 2]{She17} in an even more general case of $L_\infty$ algebra, \cite{Ste91} or \cite[Section 8]{Ste03}). The deformations of $X=X_P$ are controlled by the differential graded Lie algebra coming from the cotangent complex, which is quasi-isomorphic to the Harrison differential graded Lie algebra $\mathfrak{g}$ (see \cite{Lod92}, \cite{Fil18}). We have $H^1(\mathfrak{g})=T^1_X$, which is not finite dimensional and $H^2(\mathfrak{g})=T^2_X$, which is finite dimensional. 
We conjecture that in our case the solution of the Maurer-Cartan equation is also of the form 
\begin{equation}\label{eq pre form}
\spec \CC[[\bfTT]]/\cI,
\end{equation}
where $\bfTT$ is a basis of $T^1_X$ and $\cI$ is finitely generated involving only finitely many variables. 
\end{remark}

The results of this paper, together with those of \cite{Fil23}, suggest the following conjecture.

\begin{conjecture}\label{sing comp conj}
There exists a canonical bijective correspondence  
$
\kappa\colon\mathfrak{B}\to \mathfrak{A},
$
where $\mathfrak{A}$ is the set of components of the miniversal deformation space of the three-dimensional affine toric Gorenstein pair $\paar$, with $X=X_P$ and $\mathfrak{B}$ is the set of deformation equivalence classes of maximally mutable Laurent polynomials that have Newton polygon equal to $P$. Moreover, the deformation component is a smoothing component if and only if it corresponds to a $0$-mutable Laurent polynomial.
 \end{conjecture}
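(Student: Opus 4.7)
The plan is to construct $\kappa$ explicitly and verify its properties. Given a deformation equivalence class of maximally mutable Laurent polynomials, represented by $f = f_1 \cdots f_m$ with $\newt(f) = P$, I associate the irreducible component of the miniversal deformation space of $\paar$ obtained as the image of the formal deformation
\[
\Spec \CC[[\bfZ, t_s \mid s \in \cM(f)]] \to F_{\paar}
\]
constructed in Corollary \ref{cor sec 8}. By Theorem \ref{thmm}, the Kodaira--Spencer map of this family is injective, so its image is a well-defined linear subspace of $T^1_{\paar}$; by Theorem \ref{th dis kon}, the maximality of $f$ ensures that this image cannot be enlarged by any further mutation parameter, pinning down a single component $\kappa(f) \in \mathfrak{A}$. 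Well-definedness on deformation equivalence classes follows because the resulting component depends only on the tuple $(\cM(f_1), \dots, \cM(f_m))$ up to reordering.

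For the smoothing statement, I would combine Theorem \ref{th smooth} with the classical smoothing results of Altmann \cite{Alt97} acting on the Cayley cone directions. Corollary \ref{cor sec 8} produces a family whose general fibre is smoothed in both the mutation and the Minkowski directions precisely when $f$ is $0$-mutable, which by definition is exactly the condition that each factor $f_i$ be $0$-mutable.

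Injectivity of $\kappa$ would follow from Proposition \ref{t1 prop}: since $\dim_{\CC} T^1_{\paar}(-m) = 1$ for each $m \in \cT$, the Kodaira--Spencer image of $\kappa(f)$ reads off the set $\cM(f)$ exactly, and the Cayley-cone contribution similarly reads off the Minkowski decomposition $P = \newt(f_1) + \cdots + \newt(f_m)$. Two representatives giving the same component thus share the same decomposition data and are therefore deformation equivalent.

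The main obstacle is surjectivity, i.e.\ showing that every component of the miniversal base arises from this construction. Following Remark \ref{conj rem}, one expects the miniversal base to have the form $\Spec \CC[[\bfTT]]/\cI$ with $\cI$ finitely generated by obstruction classes in $T^2_{\paar}$, and the components to correspond to minimal primes of $\cI$. The hard part is to prove that the only unobstructed subsets $\cM \subset \cT$ along which a formal deformation of $\paar$ exists are precisely those of the form $\cM(f)$ for some maximally mutable Laurent polynomial $f$ with $\newt(f) = P$. This is a converse to Theorem \ref{th main3}, and would require computing enough explicit obstruction classes in $T^2_{\paar}$ to rule out "exotic" configurations of mutation and Cayley directions that do not come from any Laurent-polynomial factorisation. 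I expect this step to require new input beyond the techniques of this paper, likely a systematic analysis of the Harrison complex together with a combinatorial classification of which subsets of $\cT$ admit a simultaneous Laurent-polynomial realisation.
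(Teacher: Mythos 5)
The statement you are trying to prove is stated in the paper as a \emph{conjecture} (Conjecture \ref{sing comp conj}); the paper offers no proof, only supporting evidence, namely Theorem \ref{thmm}, Corollary \ref{cor sec 8}, and the heuristic in Remark \ref{conj rem}. Your proposed map $\kappa$ is exactly the correspondence the paper has in mind, and you cite the right supporting results, but your write-up is not a proof, and you correctly flag the central missing step yourself: surjectivity, i.e.\ that every component of the miniversal base arises from a maximally mutable Laurent polynomial. This is precisely the converse to Theorem \ref{th main3} that the paper does not establish, so on this point your assessment agrees with the paper's own.

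Two further gaps deserve to be named concretely. First, the set $\mathfrak{A}$ is not yet well defined: since $T^1_{\paar}$ is infinite-dimensional, the existence of a miniversal base of the form $\Spec\CC[[\bfTT]]/\cI$ with $\cI$ finitely generated is itself only conjectural (Remark \ref{conj rem}), so even the domain of discourse for ``components'' requires an argument that neither you nor the paper supplies. Second, your construction of $\kappa(f)$ via Corollary \ref{cor sec 8} is only available when $f$ is $0$-mutable: that corollary (and Proposition \ref{prop 0mutcomp} behind it) assumes $0$-mutability, and the remark immediately following it states explicitly that the combined family over $\CC[[\bfZ, t_s\mid s\in\cM(f)]]$ is not known to exist for general $f=f_1\cdots f_m$. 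For a maximally mutable but non-$0$-mutable class you therefore only have the family over $\CC[[\bfTT_f]]$ from Theorem \ref{thmm}, which does not include the Cayley/Minkowski directions $\bfZ$, and your injectivity argument (reading off the Minkowski decomposition from the Cayley-cone contribution) collapses for those classes. Any genuine proof would have to close all three gaps; the surjectivity one in particular will need obstruction computations in $T^2_{\paar}$ beyond what the paper provides.
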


\subsection{Application to deformation of Fano toric varieties}\label{zad sec}

We conclude by discussing implications of our results for constructing Fano manifolds with very ample anticanonical bundles. 
Let $f$ be a Laurent polynomial such that $\newt(f)$ is a reflexive polytope. Consider the Gorenstein toric Fano variety $Y_{\newt(f)}$ associated with the spanning fan of $\newt(f)$. The affine Gorenstein toric variety $X_{\newt(f)}$ is the affine cone over $Y_{\newt(f)}$, and thus their deformation theories are connected by a comparison theorem (see, e.g., \cite{Kle79} or \cite[Section 2]{CI16}). The polytope $\newt(f)$ has only one interior lattice point, and we say that $f$ is \emph{projectively $(m,g)$-mutable} if it is $(m,g)$-mutable and the affine function $\varphi_m$ takes value $0$ at the interior lattice point of $P$. If $f$ is projectively $(m,g)$-mutable, then by the comparison theorem, we get a one-parameter deformation of the projective variety $Y_{\newt(f)}$, and we denote its parameter by $\bar{t}_{(m,g)}$.

If $f$ is $(m,g)$-mutable, we denote by $t_{(m,g)}$ the parameter corresponding to the one-parameter deformation of $X_{\newt(f)}$ determined by the deformation pair $(m, \newt(g))$.  
For any Laurent polynomial $f$, we define the sets of deformation parameters:
$$
\bft_f := \{t_{(m,g)} \mid f \text{ is } (m,g)\text{-mutable} \},
$$
$$
\bar{\bfT}_f := \{ \bar{t}_{(m,g)} \mid f \text{ is projectively } (m,g)\text{-mutable} \}.
$$

We hope that the results of this paper can be extended to show that any Laurent polynomial $f$ gives rise to a deformation of $X_{\newt(f)}$ over $\CC[[\bft_f]]$, and a deformation of $Y_{\newt(f)}$ over $\CC[[\bar{\bfT}_f]]$, whose general fibre is a Fano manifold with a very ample anticanonical bundle.

\end{document}